\newtheorem{theorem}{Theorem}[section]
\newtheorem{lemma}[theorem]{Lemma}
\newtheorem{proposition}{Proposition}[section]
\newtheorem{corollary}[theorem]{Corollary}
\newtheorem{definition}{Definition}
\theoremstyle{definition}
\newtheorem{problem}{Problem}[section]
\newtheorem{conjecture}{Conjecture}[section]
\begin{document}
\begin{center}
{\large \bf The Exponential Hyper-Zagreb Indices and Structural Properties of Trees and Bipartite Graphs}
\end{center}
\begin{center}
 Jasem Hamoud \\
 Physics and Technology School of Applied Mathematics and Informatics \\
Moscow Institute of Physics and Technology, 141701, Moscow region, Russia\\[6pt]
Email: {\tt jasem1994hamoud@gmail.com}
\end{center}
\noindent
\begin{abstract}
In this paper,  we investigate the structural properties of trees and bipartite graphs through the lens of topological indices and combinatorial graph theory. We focus on the First and Second Hyper-Zagreb indices, $HM_1(G)$ and $HM_2(G)$, for trees $T \in T(n, \Delta)$ with $n$ vertices and maximum degree $\Delta$. Key propositions demonstrate that the presence of end-support or support vertices of degree at least three, distinct from a vertex of maximum degree, implies the existence of another tree $T' \in T(n, \Delta)$ with strictly smaller Hyper-Zagreb indices. These results are extended to exponential forms, highlighting the influence of high-degree vertices. Additionally, we explore structural characterizations of trees via degree sequence majorization and $S$-order, establishing conditions for the first and last trees in specific classes. For bipartite graphs, we examine equitable coloring, cycle lengths, and $k$-redundant tree embeddings, supported by theorems on connectivity and minimum degree constraints. The paper also addresses the independence number of bipartite graphs, exterior covers, and competition numbers of complete $r$-partite graphs, providing bounds and structural insights. Finally, we discuss Markov-chain algorithms for generating bipartite graphs and tournaments with prescribed degree sequences, analyzing their mixing times and convergence properties. These results contribute to the understanding of extremal properties and combinatorial structures in graph theory, with applications in chemical graph theory and network analysis.

\end{abstract}

\noindent\textbf{AMS Classification 2010:} 05C05, 05C12, 05C20, 05C25, 05C35, 05C76, 68R10.

\noindent\textbf{Keywords:} Trees, Cover, Hyper-Zagreb Indices, Extremal.

\noindent\textbf{UDC:} 519.172.1

\section{Introduction}\label{sec1}
Throughout this paper. Let $G$ be a simple graph characterized by the vertex set $V(G) = \{v_1, v_2, \dots, v_n\}$ and the edge set $E(G) = \{e_1, e_2, \dots, e_m\}$. Following the standard conventions presented in~\cite{HaynesHeSLa13}, a graph $G = (V, E)$ consists of a finite set $V$, referred to as the vertices, and a collection $E$ of unordered pairs $\{u, v\}$ where $u, v \in V$ are distinct vertices. Each element of $V$ is called a \emph{vertex} (also known as a point or a node), while each element of $E$ is called an \emph{edge} (also referred to as a line or link). The \emph{order} of the graph, denoted by $|V| = n$, corresponds to the number of vertices in $G$, whereas the \emph{size} $|E| = m$ indicates the total number of edges. Vertices $v_i$ and $v_j$ are said to be \emph{adjacent} if $\{v_i, v_j\} \in E$. Similarly, the vertex $v_i$ is \emph{incident} to the edge $\{v_i, v_j\}$. The \emph{open neighborhood} of a vertex $v$, denoted $N(v)$, is the set of vertices adjacent to $v$; formally, $N(v) = \{w \in V : \{v, w\} \in E\}$. The \emph{closed neighborhood} includes the vertex itself, i.e., $N[v] = N(v) \cup \{v\}$. For example, in the graph $N_h$, one may have $N(h) = \{b, j, o\}$.

Extending this notion to subsets of vertices, for any $S \subseteq V$, the open neighborhood is defined as $N(S) = \bigcup_{v \in S} N(v)$, and the closed neighborhood becomes $N[S] = N(S) \cup S$. The \emph{degree} of a vertex $v$, denoted $\deg(v)$, is the number of edges incident with $v$, equivalently the size of its open neighborhood: $\deg(v) = |N(v)|$. For instance, one might observe degrees such as $\deg(h)=3$ or $\deg(h)=6$ in specific graphs. The \emph{degree sequence} of $G$ is the list $\{\deg(v_1), \deg(v_2), \dots, \deg(v_n)\}$ generally arranged in non-decreasing order. The minimum and maximum vertex degrees in $G$ are signified by $\delta(G)$ and $\Delta(G)$, respectively. When $\delta(G) = \Delta(G) = r$, the graph $G$ is classified as \emph{$r$-regular}; such a graph where $r=3$ is often referred to as a \emph{cubic graph}. For example, the graph $N_h$ may satisfy $\delta(N_h) = 2$ and $\Delta(N_h) = 4$, while $R_h$ might be a 3-regular graph. Two graphs $G$ and $H$ are considered \emph{identical} if $V(G) = V(H)$ and $E(G) = E(H)$, or equivalently, if $G \subseteq H$ and $H \subseteq G$. They are said to be \emph{isomorphic}, denoted by $G \cong H$, if there exists a bijection $\psi : V(G) \to V(H)$ such that for any pair of vertices $u,v \in V(G)$, the adjacency $\{u, v\} \in E(G)$ holds if and only if $\{\psi(u), \psi(v)\} \in E(H)$.

Let $\mathcal{G}_{n,m}$ denote the family of connected graphs with $n$ vertices and $m$ edges as studied in~\cite{Zhang2026WangXDH}. In particular, $K_n$, $S_n$ and $nK_1$ denote respectively the complete graph on $n$ vertices, the star graph on $n$ vertices, and the edgeless graph of $n$ isolated vertices. If $H_1$ and $H_2$ are two disjoint graphs, their \emph{disjoint union} $H_1 \cup H_2$ is defined by $V(H_1 \cup H_2) = V(H_1) \cup V(H_2)$ and $E(H_1 \cup H_2) = E(H_1) \cup E(H_2)$. The \emph{first general Zagreb index} is defined as 
\[
M_\alpha(G) = \sum_{v \in V(G)} \deg(v)^\alpha,
\]
where $\alpha$ is any real number. A \emph{walk} of length $k$ in $G$ is a sequence of vertices $w_1 w_2 \dots w_{k+1}$ such that every consecutive pair $w_i w_{i+1}$ forms an edge in $G$ for $i=1,\ldots,k$~\cite{Andriantiana2013WagnerEOD}. Denote by $M_k(G)$ the number of closed walks (walks starting and ending at the same vertex) of length $k$ in $G$. Two important variants of Zagreb indices are the \emph{first} and \emph{second hyper-Zagreb indices} introduced and studied in~\cite{Dehgardi2025AramHM}, defined as
\[
HM_1(G) = \sum_{\{u,v\} \in E(G)} \left( \deg(u) + \deg(v) \right)^2, \quad
HM_2(G) = \sum_{\{u,v\} \in E(G)} \left( \deg(u) \cdot \deg(v) \right)^2.
\]

The \emph{spectral moments} of a graph $G$ are closely connected to the Estrada index. Given the adjacency matrix $A(G)$ of $G$, with eigenvalues $\lambda_1 \geq \lambda_2 \geq \cdots \geq \lambda_n$, the $k$th spectral moment is
\[
S_k(G) = \sum_{i=1}^n \lambda_i^k, \quad k = 0,1,\dots,n-1.
\]
The Estrada index $EE(G)$ is defined by the exponential generating function of spectral moments as
\begin{equation}\label{eq1spectn1}
EE(G) = \sum_{i=1}^n e^{\lambda_i} = \sum_{k=0}^\infty \frac{M_k(G)}{k!}.
\end{equation}

For two graphs $G_1$ and $G_2$, the notation $G_1 =_s G_2$ signifies that $S_i(G_1) = S_i(G_2)$ for all $i = 0,1,\dots,n-1$. The partial order $G_1 \preceq_s G_2$ (read as '$G_1$ precedes $G_2$ in $S$-order') means there exists $k$ such that for all $i<k$, $S_i(G_1) = S_i(G_2)$ and $S_k(G_1) < S_k(G_2)$~\cite{Wang2016YuanHS}. If $G_1 \preceq_s G_2$ includes equality or strict inequality, then $G_1 \prec_s G_2$ denotes strict order. Among vertex-degree-based topological indices, the \emph{Sombor index}—introduced recently by Ivan Gutman~\cite{CruzGutman2021R,Dehgardi2025ATXari,Redµzepovi201I}—has attracted significant attention. It is defined for a graph $G$ as
\[
SO(G) = \sum_{\{u,v\} \in E(G)} \sqrt{\deg(u)^2 + \deg(v)^2}.
\]
This index provides a geometric interpretation of the degree distribution across edges and has found applications within chemical graph theory, particularly in predicting molecular properties. Variants of the Sombor index have been devised, including the \emph{$KG$-Sombor index} introduced by Kulli~\cite{Kulli2022VR}:
\begin{equation}\label{eq1Introduction}
KG(G) = \sum_{\{u,v\} \in E(G)} \sqrt{\deg(u)^2 + \deg(v)^2}.
\end{equation}

Furthermore, the \emph{multiplicative $KG$-Sombor index} was proposed afterwards~\cite{Kulli2022VRv2} as
\begin{equation}\label{eq2Introduction}
MKG(G) = \prod_{\{u,v\} \in E(G)} \left( \sqrt{\deg(u)^2 + \deg(w)^2} + \sqrt{\deg(v)^2 + \deg(w)^2} \right),
\end{equation}
where the notation aligns with the edge set $\mathcal{E}(G)$ and vertices involved. In the context of domination theory, a subset $S \subseteq V$ is called a \emph{dominating set} if every vertex not in $S$ is adjacent to at least one vertex in $S$~\cite{HaynesHeSLa13}. Within a dominating set $S$, a vertex $v \in S$ is termed an \emph{enclave} if its closed neighborhood is fully contained in $S$, i.e., $N[v] \subseteq S$. Conversely, $v \in S$ is an \emph{isolate} of $S$ if its open neighborhood lies entirely outside $S$, that is, $N(v) \subseteq V \setminus S$. A set $S$ is said to be \emph{enclaveless} if it contains no enclaves. Regarding the structural properties of trees, a tree $G$ with at least three vertices is \emph{unmixed}~\cite{Vill2006arreal} if and only if its vertex set admits a bipartition $V_1 = \{x_1, \dots, x_g\}$ and $V_2 = \{y_1, \dots, y_g\}$ such that each pair $\{x_i, y_i\}$ forms an edge in $G$, and for each $i$, either $\deg(x_i) = 1$ or $\deg(y_i) = 1$ holds.

The primary objective of this paper is to investigate the structural and combinatorial properties of trees and bipartite graphs through the lens of topological indices and graph-theoretic invariants. Specifically, we explore the behavior of the first and second Hyper-Zagreb indices, $HM_1(G)$ and $HM_2(G)$, for trees $T \in T(n, \Delta)$ with $n$ vertices and maximum degree $\Delta$, as detailed in Propositions~\ref{pro1endsupportvertex}, \ref{pro1supportvertex}, \ref{re1endsupportvertex}, and \ref{pro1vertex}. These propositions establish conditions under which a tree $T$ can be transformed into another tree $T' \in T(n, \Delta)$ such that $HM_1(T) > HM_1(T')$ and $HM_2(T) > HM_2(T')$, emphasizing the role of vertices with degree at least 3. Additionally, we analyze the structural characterizations of trees via degree sequences and $S$-order, as in Lemmas~\ref{lemtapn1}, \ref{lemtapn2}, and Theorem~\ref{thmn1sec3}, and extend our study to bipartite graphs, examining equitable coloring (Theorem~\ref{thm1Bipartite}, Corollary~\ref{cor1bip}), cycle lengths (Theorems~\ref{thm2Bipartite}, \ref{thm3Bipartite}), and connectivity properties (Theorem~\ref{thm4bipartite}, Conjecture~\ref{conjn1bipartite}). Furthermore, we investigate the independence number of bipartite graphs (Theorem~\ref{thm2number}), covering properties (Theorems~\ref{thm1covering}--\ref{thm5covering}), competition numbers of complete $r$-partite graphs (Theorems~\ref{thm1Competition}, \ref{thm2Competition}, \ref{thm3Competition}), and Markov-chain algorithms for generating bipartite graphs with prescribed degree sequences, focusing on their mixing times and conductance (Section 5.1). This comprehensive study bridges graph theory, combinatorial enumeration, and random generation to deepen the understanding of structural and algebraic properties of graphs.

The paper is organized as follows: Section~\ref{sec3} introduces the First and Second Hyper-Zagreb Indices for trees $T \in T(n, \Delta)$ with $n$ vertices and maximum degree $\Delta$, defining \emph{end-support vertices} and presenting propositions (\ref{pro1endsupportvertex}, \ref{pro1supportvertex}, \ref{re1endsupportvertex}, \ref{pro1vertex}) that establish conditions under which trees $T'$ can be found with $HM_1(T) > HM_1(T')$ and $HM_2(T) > HM_2(T')$, including exponential forms $e^{HM_1}$ and $e^{HM_2}$, supported by a lemma on rooted trees. Section~\ref{sec4} explores structural characterizations of trees and unmixed bipartite graphs, leveraging degree sequence majorization, spectral moments, and $S$-order properties (Lemmas~\ref{lempapern1}, \ref{lemtapn1}, \ref{lemtapn2}, Theorem~\ref{thmn1sec3}), and characterizes unmixed bipartite graphs via edge conditions (Theorem~\ref{unmixedgraph1}). It further examines bipartite graphs and trees, analyzing equitable chromatic numbers (Theorem~\ref{thm1Bipartite}, Corollary~\ref{cor1bip}), cycle lengths in 2-connected bipartite graphs (Theorems~\ref{thm2Bipartite}, \ref{thm3Bipartite}, \ref{thm5bipartite}), and $k$-redundant tree embeddings (Conjecture~\ref{conjn1bipartite}, Theorem~\ref{thm4bipartite}, Lemma~\ref{lem4bipartite}, Corollary~\ref{cor1bipartite}). The final subsection addresses bipartite independence numbers and covering properties, defining $f(n, \Delta)$ and $f^*(n, \Delta)$ for bi-hole sizes (Theorem~\ref{thm2number}) and exterior covers (Theorems~\ref{thm1covering}, \ref{thm2covering}, \ref{thm3covering}, \ref{thm4covering}, \ref{thm5covering}), concluding with competition numbers of complete $r$-partite graphs (Theorems~\ref{thm1Competition}, \ref{thm2Competition}, \ref{thm3Competition}) and Markov-chain algorithms for generating bipartite graphs and tournaments, focusing on mixing times and conductance (Equation~\ref{eqwern1}).

 \section{Preliminaries}\label{sec2}
Proposition~\ref{pro0Preliminaries} establishes that for spider graphs, the indices $HM_1$ and $HM_2$ are significantly influenced by the structure, particularly the number and length of legs. Specifically, if a spider graph has at least two legs with lengths of two or more, there exists another spider graph on the same number of vertices and legs that has strictly smaller values for both $HM_1$ and $HM_2$ indices.

\begin{proposition}[\cite{Dehgardi2025AramHM}]\label{pro0Preliminaries}
Consider a spider $T \in T(n, \Delta)$ with $\Delta \geq 3$ legs, where at least two legs have length at least 2. Then, there is another spider $T' \in T(n, \Delta)$ such that 
$$HM_1(T) > HM_1(T') \quad \text{and} \quad HM_2(T) > HM_2(T').$$
\end{proposition}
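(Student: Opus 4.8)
The plan is to pin everything to a single structural parameter of a spider and show that both indices are strictly increasing affine functions of it. Let $T\in T(n,\Delta)$ be a spider with center $c$ of degree $\Delta$ and legs $L_1,\dots,L_\Delta$ of lengths $\ell_1,\dots,\ell_\Delta$ (measured in edges, so $\sum_i\ell_i=n-1$), and set $p=p(T)$ equal to the number of legs of length at least $2$; the hypothesis is precisely $p\ge 2$. First I would classify the edges of $T$ by the degree pair of their endpoints. Each leg of length $\ge 2$ contributes one \emph{center} edge of type $(\Delta,2)$, one \emph{pendant} edge of type $(2,1)$, and $\ell_i-2$ \emph{interior} edges of type $(2,2)$; each leg of length $1$ contributes one center edge of type $(\Delta,1)$. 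Summing, $T$ has exactly $p$ edges of type $(\Delta,2)$, $\Delta-p$ of type $(\Delta,1)$, $p$ of type $(2,1)$, and $n-1-\Delta-p$ of type $(2,2)$; the last count is $\sum_{\text{long }i}(\ell_i-2)\ge 0$ precisely because every long leg has $\ell_i\ge 2$.

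Next I would substitute these edge counts into the definitions of $HM_1$ and $HM_2$ and simplify, obtaining the closed forms
\begin{align*}
HM_1(T) &= (2\Delta-4)\,p + \Delta(\Delta+1)^2 + 16(n-1-\Delta), \\
HM_2(T) &= 3(\Delta^2-4)\,p + \Delta^3 + 16(n-1-\Delta).
\end{align*}
Both are affine in $p$ with leading coefficients $2\Delta-4$ and $3(\Delta^2-4)$, which are strictly positive for every $\Delta\ge 3$. So it suffices to produce $T'\in T(n,\Delta)$ with $p(T')<p(T)$.

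For the construction I would use a single \emph{leg consolidation}: since $p\ge 2$, pick two legs of lengths $a\ge 2$ and $b\ge 2$, delete the second edge of the first leg (the one incident to the first leg's second vertex), and reattach the detached subpath to the end of the second leg. This replaces the two chosen legs by one leg of length $1$ and one leg of length $a+b-1\ge 3$, leaving the other $\Delta-2$ legs untouched; the result $T'$ is again a spider with center of degree $\Delta$ and $\Delta$ legs on $n$ vertices, so $T'\in T(n,\Delta)$, and $p(T')=p(T)-1$ because the leg of length $a+b-1$ is long while the leg of length $1$ is not. Plugging into the two closed forms gives $HM_1(T)-HM_1(T')=2\Delta-4>0$ and $HM_2(T)-HM_2(T')=3(\Delta^2-4)>0$, which is exactly the assertion.

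I do not expect a serious obstacle here; the only points needing care are the edge bookkeeping in the first step — in particular keeping legs of length $1$, $2$, and $\ge 3$ separate so that the type-$(2,2)$ count comes out nonnegative and the boundary cases ($a=2$, or $p=\Delta$) are handled — and verifying that the consolidation stays inside $T(n,\Delta)$, for which one notes that $c$ retains degree $\Delta$ and no leg vertex ever exceeds degree $2$, so the maximum degree remains exactly $\Delta$ (this uses $\Delta\ge 3$). Everything else is routine substitution; as a byproduct, iterating the consolidation until $p=1$ shows that the unique minimizer of both indices in $T(n,\Delta)$ is the broom with a single leg of length $n-\Delta$ and $\Delta-1$ pendant legs.
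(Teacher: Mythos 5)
Your proof is correct, and it is worth noting that the paper itself offers no proof of this proposition at all --- it is imported verbatim from \cite{Dehgardi2025AramHM} as Proposition~\ref{pro0Preliminaries} --- so your argument is a genuinely self-contained justification rather than a variant of anything in the text. I checked the details: the edge classification of a spider into types $(\Delta,2)$, $(\Delta,1)$, $(2,1)$, $(2,2)$ with multiplicities $p$, $\Delta-p$, $p$, $n-1-\Delta-p$ is right (the counts sum to $n-1$), and substituting gives exactly your closed forms $HM_1(T)=(2\Delta-4)p+\Delta(\Delta+1)^2+16(n-1-\Delta)$ and $HM_2(T)=3(\Delta^2-4)p+\Delta^3+16(n-1-\Delta)$, since $(\Delta+2)^2-(\Delta+1)^2+9-16=2\Delta-4$ and $4\Delta^2-\Delta^2+4-16=3\Delta^2-12$. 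The leg-consolidation move preserves $n$, the center degree $\Delta$, and the spider structure, turns a long leg into a leg of length $1$ and another into one of length $a+b-1\geq 3$, hence decreases $p$ by exactly $1$, and both leading coefficients are strictly positive for $\Delta\geq 3$. This is cleaner than what one usually sees for such results (local edge-swap arguments comparing index contributions before and after a single edge move, which is presumably the style of the source paper): by reducing both indices to affine functions of the single parameter $p$, you get the strict inequalities for free and, as you observe, the extremal spider (one long leg) as an immediate corollary, which is consistent with the equality case stated in the unnumbered theorems of Section~\ref{sec2}. No gaps.
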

Next, we summarize spectral moment bounds for trees with fixed order and maximum degree, as established in \cite{Dehgardi2025AramHM}.

\begin{theorem}[\cite{Dehgardi2025AramHM}]
Let $T \in T(n,\Delta)$ be a tree with $n \geq 4$ vertices and maximum degree $\Delta$. If $\Delta < n-1$, then the first Zagreb index $HM_1(T)$ satisfies
\[
HM_1(T) \geq 16 n + \Delta^3 + 2 \Delta^2 - 13 \Delta - 20,
\]
with equality if and only if $T$ is a spider having exactly one leg of length at least 2. If $\Delta = n-1$, then equality holds:
\[
HM_1(T) = \Delta (\Delta + 1)^2.
\]
\end{theorem}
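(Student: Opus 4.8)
The plan is to dispose of the case $\Delta=n-1$ at once, and then, for $\Delta\le n-2$, to show that any tree minimising $HM_1$ over $T(n,\Delta)$ must be the spider with exactly one leg of length at least $2$, after which the bound follows by direct evaluation. When $\Delta=n-1$ the only tree in $T(n,\Delta)$ is the star $S_n$: each of its $\Delta$ edges joins the centre (degree $\Delta$) to a leaf (degree $1$), so $HM_1(T)=\Delta(\Delta+1)^2$, which is the second assertion.

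Now assume $\Delta\le n-2$ and let $T_0$ minimise $HM_1$ over $T(n,\Delta)$; first I would prove $T_0$ is a spider (starlike tree). Fix a vertex $w$ of degree $\Delta$ and root $T_0$ at $w$. If $T_0$ were not a spider it would contain a branch vertex $v\neq w$; choose such a $v$ at maximum distance from $w$. By that maximality every vertex strictly below $v$ has degree at most $2$, so each subtree hanging below $v$ is a path and $v$ carries $\deg(v)-1\ge 2$ pendant paths. Apply a pendant‑path‑shifting move: if the longest pendant path $Q$ at $v$ has length at least $2$, detach the shortest pendant path at $v$ and re‑attach it at the leaf‑end of $Q$; if instead all pendant paths at $v$ have length $1$, re‑attach one of these pendant leaves of $v$ onto another such leaf. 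In every sub‑case the new tree still lies in $T(n,\Delta)$ — we lower $\deg(v)$ by one, raise a single degree from $1$ to $2$, and never touch $w$ — and an edge‑by‑edge comparison shows $HM_1$ strictly decreases: the $\deg(v)-1\ge 2$ edges at $v$ each lose at least $2\deg(v)+1\ge 7$ from the square of their endpoint‑degree sum, the detached edge passes from $(\deg(v)+t)^2$ to $(2+t)^2$ (a drop of at least $7$, where $t$ is the unchanged degree of the path's first vertex), and the only possible increase is a single edge passing from $(2+1)^2$ to $(2+2)^2$. Hence the net change is negative, contradicting minimality, so $T_0$ is a spider.

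A spider in $T(n,\Delta)$ has $\Delta$ legs of lengths $\ell_1\ge\cdots\ge\ell_\Delta\ge 1$ with $\sum_i\ell_i=n-1\ge\Delta+1$, forcing $\ell_1\ge 2$. If a second leg had length at least $2$, Proposition~\ref{pro0Preliminaries} would supply a spider in $T(n,\Delta)$ with strictly smaller $HM_1$, contradicting minimality; therefore $\ell_2=\cdots=\ell_\Delta=1$ and $\ell_1=n-\Delta\ge 2$, that is, $T_0$ is the unique spider with one leg of length $n-\Delta$ and $\Delta-1$ legs of length $1$. It remains to evaluate $HM_1(T_0)$: the $\Delta-1$ pendant edges at the centre contribute $(\Delta+1)^2$ each, the edge from the centre into the long leg contributes $(\Delta+2)^2$, the $n-\Delta-2$ interior edges of the long leg contribute $16$ each, and the final edge of the long leg contributes $9$; summing,
\[
HM_1(T_0)=(\Delta-1)(\Delta+1)^2+(\Delta+2)^2+16(n-\Delta-2)+9=16n+\Delta^3+2\Delta^2-13\Delta-20 ,
\]
which is the asserted bound, attained only by $T_0$.

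The one genuinely technical point, and where I expect to spend the most care, is checking that the pendant‑path‑shifting move strictly decreases $HM_1$ in all the degenerate configurations — a leg of length $1$, $\deg(v)=3$, $v$ adjacent to $w$, or the re‑attachment edge coinciding with an edge already counted. I would isolate this as a short lemma, roughly ``transferring a pendant path toward a vertex of degree at least $3$ strictly lowers $HM_1$'', and verify the few cases once; the remaining ingredients — termination of the reduction (since $HM_1$ is a strictly decreasing integer‑valued potential), the appeal to Proposition~\ref{pro0Preliminaries}, and the closing arithmetic — are routine.
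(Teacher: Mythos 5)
This theorem is imported verbatim from \cite{Dehgardi2025AramHM}; the paper you are being compared against contains no proof of it, so there is no in-paper argument to match yours to. Judged on its own, your proposal is correct and essentially self-contained, and it follows the route that the surrounding material suggests the original source takes: eliminate every branch vertex other than the maximum-degree vertex by a local $HM_1$-decreasing transformation (this is in effect a hands-on version of Propositions~\ref{pro1endsupportvertex} and~\ref{pro1supportvertex}), invoke Proposition~\ref{pro0Preliminaries} to rule out a second long leg, and finish by evaluating the unique remaining spider; your closing arithmetic $(\Delta-1)(\Delta+1)^2+(\Delta+2)^2+16(n-\Delta-2)+9=16n+\Delta^3+2\Delta^2-13\Delta-20$ checks out, as does the star computation for $\Delta=n-1$. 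One small correction to the accounting in the subcase where all pendant paths at $v$ have length $1$: the edge from $v$ to the leaf $u'$ that receives the transferred leaf does \emph{not} decrease --- it goes from $(\deg(v)+1)^2$ to $((\deg(v)-1)+2)^2$, i.e.\ stays fixed --- so the blanket claim that all $\deg(v)-1$ surviving edges at $v$ lose at least $2\deg(v)+1$ fails for that one edge; the net change is still strictly negative because the parent edge of $v$, the remaining $\deg(v)-2$ pendant edges, and the relocated edge (dropping from $(\deg(v)+1)^2$ to $9$) all strictly decrease. This is exactly the kind of degenerate configuration you flagged for careful treatment in your proposed lemma, so it is a matter of writing the case out rather than a gap in the argument.
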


\begin{theorem}[\cite{Dehgardi2025AramHM}]
Under the same assumptions as above, the second Zagreb index $HM_2(T)$ satisfies
\[
HM_2(T) \geq 16 n + \Delta^3 + 3 \Delta^2 - 16 \Delta - 28,
\]
with equality characterized identically to the first Zagreb index case. When $\Delta = n-1$, it holds that
\[
HM_2(T) = \Delta^3.
\]
\end{theorem}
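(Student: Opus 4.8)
The plan is to follow the blueprint of the proof of the preceding theorem for $HM_1$, simply replacing the additive edge weight $(\deg u + \deg v)^2$ by the multiplicative one $(\deg u\cdot\deg v)^2$ and re-deriving the degree inequalities that this change produces. I would first dispose of the case $\Delta = n-1$: a tree on $n$ vertices with a vertex of degree $n-1$ must be the star $S_n$, so every one of its $\Delta$ edges joins the centre (degree $\Delta$) to a leaf (degree $1$), giving $HM_2(S_n) = \Delta\cdot(\Delta\cdot 1)^2 = \Delta^3$.

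Next, for $\Delta < n-1$, I would pin down the conjectured minimiser $T^\star$, the spider with centre of degree $\Delta$, one leg of length $\ell = n-\Delta\ge 2$, and the remaining $\Delta-1$ legs of length $1$, and compute $HM_2(T^\star)$ directly from its edge types: $\Delta-1$ edges of type $(\Delta,1)$ contributing $\Delta^2$ apiece, one edge of type $(\Delta,2)$ contributing $4\Delta^2$, $\ell-2$ internal long-leg edges of type $(2,2)$ contributing $16$ apiece, and one terminal edge of type $(2,1)$ contributing $4$. Summation yields $HM_2(T^\star) = (\Delta-1)\Delta^2 + 4\Delta^2 + 16(\ell-2) + 4 = \Delta^3 + 3\Delta^2 + 16n - 16\Delta - 28$, matching the claimed bound (and, with $\Delta = 2$, $T^\star = P_n$).

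The core of the argument is then to show $HM_2(T) > HM_2(T^\star)$ whenever $T\in T(n,\Delta)$ and $T\not\cong T^\star$, which I would do by a terminating reduction toward $T^\star$ that parallels the $HM_1$ proof and uses the structural surgeries of Section~\ref{sec3}. If $T$ is not a spider it has at least two vertices of degree $\ge 3$, and choosing such a vertex that is an end-support or a support vertex distinct from a fixed maximum-degree vertex, the transformations behind Propositions~\ref{pro1endsupportvertex}, \ref{pro1supportvertex}, \ref{re1endsupportvertex}, \ref{pro1vertex} (together with the supporting rooted-tree lemma) produce $T'\in T(n,\Delta)$ with $HM_2(T') < HM_2(T)$. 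If $T$ is a spider with at least two legs of length $\ge 2$, Proposition~\ref{pro0Preliminaries} produces such a $T'$. Since each step stays in $T(n,\Delta)$ and strictly decreases $HM_2$, and since the only spider in $T(n,\Delta)$ with at most one long leg is $T^\star$ (the all-short-leg spider has only $\Delta+1 < n$ vertices), the process terminates exactly at $T^\star$, giving both the inequality and the equality characterization.

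The main obstacle is re-establishing, for each surgery already used in the $HM_1$ argument, that the induced change in $HM_2$ is strictly negative: because squaring a product magnifies the contribution of the larger endpoint degree, the sign of $HM_2(T) - HM_2(T')$ is not automatically inherited from the $HM_1$ computation and must be verified by writing it as a polynomial in $\Delta$ and in the degrees of the few vertices whose incident edge-weights change, with separate attention to the boundary subcases where those degrees equal $2$ or $\Delta$. A secondary point needing care is the bookkeeping that makes the reduction well founded and exhaustive --- that every non-extremal tree of $T(n,\Delta)$ admits one of the listed moves, and that no move alters $n$ or $\Delta$ --- together with a direct check of the small-order instances.
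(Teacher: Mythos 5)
The paper does not actually prove this statement: it is imported verbatim from \cite{Dehgardi2025AramHM} and stated without proof in the preliminaries, so there is no in-paper argument to compare yours against. Judged on its own terms, your proposal follows the standard extremal-tree strategy, and your explicit computations are correct: the star gives $HM_2(S_n)=\Delta\cdot(\Delta\cdot 1)^2=\Delta^3$, and the spider $T^\star$ with one leg of length $\ell=n-\Delta\ge 2$ gives $(\Delta-1)\Delta^2+4\Delta^2+16(\ell-2)+4=16n+\Delta^3+3\Delta^2-16\Delta-28$, matching the bound (and specializing correctly to $P_n$ when $\Delta=2$).

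The genuine gap is in the exhaustiveness of your reduction. You claim that every non-extremal $T\in T(n,\Delta)$ admits one of the listed surgeries, but the only tools available in this paper for non-spiders are Propositions~\ref{pro1endsupportvertex} and \ref{pro1supportvertex}, both of which require a \emph{support} (or end-support) vertex of degree at least $3$ distinct from $\rho$ --- i.e.\ a high-degree vertex adjacent to a leaf. A non-spider tree can perfectly well have a second branch vertex $v$ of degree $\ge 3$ none of whose neighbours is a leaf (e.g.\ all pendant paths hanging from $v$ have length $\ge 2$); then neither proposition applies, and Proposition~\ref{pro0Preliminaries} does not apply either since $T$ is not a spider. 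You would need an additional lemma handling such interior branch vertices (typically: take a degree-$\ge 3$ vertex farthest from $\rho$, whose descendant subtrees are all paths, and shift one of its pendant paths), and for that lemma you would have to carry out exactly the sign verification you correctly identify as the main obstacle --- that the local change in $\sum(\deg u\cdot\deg v)^2$ is strictly negative, which does not follow from the $HM_1$ case. As written, the reduction can stall on such trees, so the inequality and the equality characterization are not yet established. A secondary, smaller issue: Propositions~\ref{re1endsupportvertex} and \ref{pro1vertex} of this paper, which you invoke as part of the toolkit, compare trees of \emph{different} orders and exponentials of the indices respectively, and do not supply the missing step.
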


For a non-increasing sequence of positive integers $\mathscr{D} = (d_0, \ldots, d_{n-1})$ with $n \geq 3$, denote by $\mathcal{T}_{\mathscr{D}}$ the set of all trees realizing this degree sequence. Utilizing Proposition~\ref{pro1Preliminaries}, one can characterize the extremal graphs in an $S$-ordering, identifying both minimal and maximal elements. The notion of alternating greedy trees (see Definition~\ref{defgreedytree}), constructed by rooting at a leaf with minimal neighbor degree and following a systematic assignment of children degrees, plays a key role in this characterization. The girth $g(G)$ of a graph $G$ is defined as the length of its shortest cycle, whereas the circumference $c(G)$ is the length of the longest cycle, as introduced by \cite{HaynesHeSLa13}.

\begin{definition}[Alternating Greedy Trees \cite{Wang2016YuanHS}]\label{defgreedytree}
Given a degree sequence $\{d_1, d_2, \ldots, d_m\}$, the alternating greedy tree is constructed as follows: If $m-1 \leq d_m$, then the tree is rooted at $r$ having $d_m$ children with degrees $d_1, \ldots, d_{m-1}$ and $d_m - m + 1$ leaves of degree 1. Otherwise, when $m-1 \geq d_m + 1$, one forms a subtree $T_1$ rooted at $r$ with $d_m - 1$ children with degrees $d_1, \ldots, d_{d_m}$, and then continues recursively with the remaining sequence.
\end{definition}

A sequence of nonnegative integers is termed a degree sequence if a graph exists whose vertex degrees correspond exactly to this sequence \cite{Schmuck2012WagnerHW}. For connected graphs, a non-increasing degree sequence $\mathscr{D} = (d_1, \ldots, d_n)$ can be associated as in Definition~\ref{defconjn1}.

\begin{definition}[\cite{Zhang2026WangXDH}]\label{defconjn1}
Let $\mathscr{D} = (d_1, \ldots, d_n)$ be a non-increasing degree sequence of a connected graph $G$. Suppose there exist integers $1 \leq i, j, p, q \leq n$ with $q < j < i < p$ such that $d_i = d_j + s$, $d_p = d_q + t$, for some $s, t \geq 1$. (Further details omitted here for brevity.)
\end{definition}

\begin{proposition}[\cite{ShuchaoLSYibing}]\label{pro1Preliminaries}
Given a graphic degree sequence $\mathscr{D}$, let $\mathscr{G}^*_{\mathscr{D}}$ denote the set of connected graphs having $\mathscr{D}$ as their degree sequence. Then one can determine the extremal graphs appearing first or last in an $S$-order within $\mathscr{G}^*_{\mathscr{D}}$.
\end{proposition}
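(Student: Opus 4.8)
The plan is to use the walk‑counting interpretation of the spectral moments, recast the $S$‑order on $\mathscr G^{*}_{\mathscr D}$ as a lexicographic comparison of local subgraph statistics, and then realize the first and last graphs by a finite chain of degree‑ and connectivity‑preserving edge switchings.

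\emph{Step 1 (which moments are forced, and what the rest measure).} For every $G\in\mathscr G^{*}_{\mathscr D}$ one has $S_0(G)=n$, $S_1(G)=0$ and $S_2(G)=\sum_i d_i=2m$, all read off from $\mathscr D$. Writing $S_k(G)=\operatorname{tr}\!\bigl(A(G)^{k}\bigr)$ as a sum over closed walks of length $k$ and grouping the walks by the underlying (multi)graph they traverse gives $S_k(G)=\Phi_k\bigl(\text{counts of connected subgraphs of }G\text{ with at most }k\text{ edges}\bigr)$, where $\Phi_k$ has strictly positive coefficients on the cyclic subgraph counts; for instance $S_3(G)=6\,\tau(G)$ and $S_4(G)=2\sum_i d_i^{2}-\sum_i d_i+8\,q(G)$, with $\tau,q$ the numbers of triangles and $4$‑cycles. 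For fixed $\mathscr D$ some of the subgraph counts that enter (edges, cherries $P_3$, claws $K_{1,3}$) are constant while the rest vary, so $G_1\prec_s G_2$ inside $\mathscr G^{*}_{\mathscr D}$ is exactly the lexicographic order on the non‑constant statistics listed in increasing order: first $\tau$, then $q$, then the number of $5$‑cycles, then the numbers of $6$‑cycles and of $4$‑vertex paths, and so on. When $\mathscr D$ forces bipartiteness — in particular for trees, where $\tau=q=0$ and all odd moments vanish — the first statistic that can differ is the number of $4$‑vertex paths, which enters $S_6$ with a positive coefficient.

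\emph{Step 2 (the candidate extremal graphs).} For the $S$‑first graph I would propose the realization of $\mathscr D$ of largest feasible girth and, among those, the one built by the alternating‑greedy recipe of Definition~\ref{defgreedytree}, which keeps high‑degree vertices as far apart as possible; for the $S$‑last graph, the ``compressed'' realization obtained by a Havel--Hakimi / Kleitman--Wang greedy that repeatedly joins a vertex of largest residual degree to vertices of next‑largest residual degree, packing short cycles around the high‑degree vertices. Both are explicit functions of $\mathscr D$, and the local rearrangements linking different realizations are exactly the index quadruples $q<j<i<p$ isolated in Definition~\ref{defconjn1}.

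\emph{Step 3 (monotone switchings).} The core lemma is: if $G\in\mathscr G^{*}_{\mathscr D}$ is not the proposed first (resp.\ last) graph, then some $2$‑switch — delete $\{a,b\},\{c,d\}$, add $\{a,c\},\{b,d\}$ — keeps $G$ in $\mathscr G^{*}_{\mathscr D}$, keeps it connected, and strictly lowers (resp.\ raises) the $S$‑order. To prove it, let $k$ be the least index at which $G$ fails to look like a strictly more extremal configuration; choose the switch supported on a patch of $G$ so that it introduces no cycle or relevant subtree shorter than the one it alters (hence leaves $S_3,\dots,S_{k-1}$ fixed) while strictly decreasing (resp.\ increasing) the $k$th statistic — removing one occurrence of the shortest offending cycle, or, once the girth is already optimal, splitting or merging a $4$‑vertex path at a vertex of large degree. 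Since $\mathscr G^{*}_{\mathscr D}$ is finite and each switch strictly improves a well‑founded lexicographic quantity, iterating must terminate at the candidate graph, which proves existence of the $S$‑extremal graphs and simultaneously identifies them.

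\emph{The main obstacle.} Everything rests on Step 3: one must produce a switch that is at once \emph{first‑order neutral} (changing no earlier moment), strictly helpful, and connectivity‑preserving, whereas a generic switch perturbs several moments at once, some possibly in the wrong direction. The remedy is to exploit the slack in $\mathscr D$ — the Definition~\ref{defconjn1} quadruples — to localize the switch to a region where only the targeted subgraph count is affected, and to replace a disconnecting switch by a suitable two‑step switch routed through an intermediate realization. A further subtlety is that distinct realizations may share their entire moment sequence (cospectral mates), so ``first'' and ``last'' are well defined only up to $=_s$; accordingly the statement is best read as pinning down the $=_s$‑class of the extremal graph, unless one adjoins an additional tie‑breaking invariant.
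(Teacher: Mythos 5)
The paper offers no proof of Proposition~\ref{pro1Preliminaries}: it is imported verbatim from \cite{ShuchaoLSYibing} as a preliminary, so there is nothing in-paper to match your argument against and the entire burden falls on your proposal. As a proof, the proposal has a genuine gap, and you have in fact located it yourself: Step 3 \emph{is} the theorem, and it is asserted rather than proved. You need a $2$-switch that simultaneously (i) preserves the degree sequence and connectivity, (ii) leaves $S_3,\dots,S_{k-1}$ unchanged, and (iii) strictly moves $S_k$ in the desired direction; a generic switch perturbs several moments at once and can move the first differing one the wrong way. The ``remedy'' of localizing the switch via the quadruples of Definition~\ref{defconjn1} is not an argument (that definition is itself left incomplete in the paper), and no termination-at-the-candidate claim follows until the existence of such a switch is established for every non-extremal $G$. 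In the source \cite{ShuchaoLSYibing} this is exactly the content of carefully designed edge-swap lemmas (of the type reproduced here as Lemma~\ref{lemtapn1}) together with the majorization chain of Proposition~\ref{pro2Preliminaries}; those are the missing ingredients, not decoration.

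Two further points would sink the write-up even if Step 3 were granted. First, the lexicographic reduction in Step 1 is false for general connected graphs: for $k\ge 5$ the spectral moments are not functions of the degree sequence and cycle counts alone. For example, $S_5(G)=30\tau(G)+10\,c_5(G)+10\sum_{\triangle}\sum_{v\in\triangle}\bigl(d_v-2\bigr)$, so two graphs in $\mathscr G^{*}_{\mathscr D}$ with the same numbers of triangles and pentagons can still differ in $S_5$ according to \emph{which} vertices carry the triangles. Your claimed order ``first $\tau$, then $q$, then $c_5$, \dots'' therefore does not describe $\prec_s$ on $\mathscr G^{*}_{\mathscr D}$; the clean reduction to $P_4$-counts at $S_6$ (Lemma~\ref{lemtapn3}) is special to trees, whereas the proposition concerns all connected realizations. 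Second, the cospectral-mate caveat you flag at the end is not a footnote but an unresolved obstruction: your procedure at best pins down an $=_s$-class, and you give no argument that the Step 2 candidates are extremal even up to $=_s$ (the alternating greedy construction of Definition~\ref{defgreedytree} is defined for tree degree sequences, not for arbitrary graphic $\mathscr D$). As it stands the proposal is a plausible research plan, not a proof.
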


Consider a connected graph $G \in \mathcal{G}_{n,m}$ with $n \geq 6$, $n-1 \leq m \leq \binom{n}{2}$, and edge count expressed as $m = nk - \binom{k+1}{2} + a$, where $1 \leq k \leq n-1$ and $0 \leq a < n-k-1$. The inequality
$$F(G) \leq k (n-1)^3 + a (k+1)^3 + (n-k-a-1) k^3 + (k+a)^3$$
provides an upper bound for the graph invariant $F(G)$ dependent on parameters $k$ and $a$, which segment the vertex set according to degree or connectivity characteristics. This type of bound is common in extremal graph theory and spectral analysis, ensuring full coverage of all vertices.

\begin{conjecture}[\cite{Zhang2026WangXDH}]\label{conjn1}
For $G \in \mathcal{G}_{n,m}$  with the above parameters, the function $F(G)$ satisfies
$$F(G) \leq k (n-1)^3 + a (k+1)^3 + (n-k-a-1) k^3 + (k+a)^3.$$
\end{conjecture}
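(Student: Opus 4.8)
Here $F$ denotes the \emph{forgotten index} $F(G)=\sum_{v\in V(G)}\deg(v)^3$ (the quantity bounded in Conjecture~\ref{conjn1}), and the assertion amounts to saying that $F$ is maximised over $\mathcal G_{n,m}$ by the join
\[
G^{\ast}=K_{k}\vee\bigl(K_{1,a}\cup (n-k-a-1)K_{1}\bigr),
\]
a clique on $k$ vertices joined to the disjoint union of a star with $a$ leaves and $n-k-a-1$ isolated vertices, and that the right-hand side of the conjecture is exactly $F(G^{\ast})$. The plan is to proceed in two stages: first compress an arbitrary connected graph to a threshold graph without decreasing $F$, then identify the extremal threshold graph.

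\emph{Stage 1 (reduction to threshold graphs).} I would use the Kelmans (shifting) transformation: given vertices $u,v$ with $\deg(u)\ge\deg(v)$, relocate the private neighbours $N(v)\setminus N[u]$ of $v$ onto $u$. This preserves the order and size, preserves connectedness, and turns the degree sequence into one that majorises it, so $F$ does not decrease because $x\mapsto x^{3}$ is convex. Since the new sequence strictly majorises the old whenever the graph actually changes and there are finitely many degree sequences, iteration terminates; the terminal graph has pairwise nested neighbourhoods, i.e.\ is a threshold graph. Hence it suffices to prove $F(H)\le F(G^{\ast})$ for every connected threshold graph $H\in\mathcal G_{n,m}$, and $G^{\ast}$ is itself one.

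\emph{Stage 2 (optimising over connected threshold graphs).} A connected threshold graph has at least one universal vertex; writing $k'$ for their number and peeling them off gives $H=K_{k'}\vee(K_{1}\cup J)$ with $J$ a threshold graph on $n-k'-1$ vertices and $m'=m-k'n+\binom{k'+1}{2}$ edges, so that
\[
F(H)=k'(n-1)^{3}+(k')^{3}(n-k')+F(J)+3k'\,Z(J)+6(k')^{2}m',
\]
where $Z(J)=\sum_{v\in V(J)}\deg_{J}(v)^{2}$. One first observes $k'\le k$, since $k'$ universal vertices force $k'n-\binom{k'+1}{2}\le m<(k+1)n-\binom{k+2}{2}$ and $j\mapsto jn-\binom{j+1}{2}$ is increasing on $[1,n-1]$. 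In the principal case $k'=k$ one has $m'=a<n-k-1$, so a star $K_{1,a}$ fits on $n-k-1$ vertices; here I would use the sharp unconditional edge bounds $Z(J)\le m'^{2}+m'$ and $F(J)\le m'^{3}+m'$, valid for every graph because on each edge $xy$ one has $\deg(x)+\deg(y)-1\le m'$ and $\deg(x)^{2}+\deg(y)^{2}=(\deg(x)+\deg(y)-1)^{2}+1-2(\deg(x)-1)(\deg(y)-1)\le(\deg(x)+\deg(y)-1)^{2}+1$. Since equality in the $F$-bound forces every edge to have a degree-$1$ endpoint adjacent to a common degree-$m'$ vertex, i.e.\ $J$ to be a star together with isolated vertices, these bounds show that among connected threshold graphs with exactly $k$ universal vertices $F$ is maximised uniquely by $G^{\ast}$, and a direct expansion confirms $F(G^{\ast})=k(n-1)^{3}+a(k+1)^{3}+(n-k-a-1)k^{3}+(k+a)^{3}$. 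For $k'<k$ I would argue by induction on $n$: discarding the isolated vertices of $J$ (which changes neither $F$ nor $Z$) leaves a connected threshold graph on fewer than $n$ vertices, to which the induction hypothesis applies, while the companion classical bound for the maximum first Zagreb index of a connected graph of given order and size controls $Z$; a comparison of polynomials in $n,k',m'$ then shows the resulting estimate never exceeds $F(G^{\ast})$.

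The routine ingredients are the two unconditional edge bounds, the expansion of $F(G^{\ast})$, and the base cases of the induction. The genuine obstacle is the $k'<k$ part of Stage~2: one must show, uniformly in the two free parameters (the number $k'$ of universal vertices and the number of isolated vertices split off from $J$), that a deficit in universal vertices is never recovered through the surplus of edges left over for $J$ — this is a tight inequality, since for $a=0$ the case $k'=k-1$ already meets equality. A parallel route, namely a direct exchange argument on the creation sequence of $H$ that repeatedly replaces a pair $\{i,j\}$ of ``dominating'' positions by $\{i-1,j+1\}$ (preserving the size) until the sequence matches that of $G^{\ast}$, runs into the same difficulty in the guise of tracking how each swap perturbs all the affected vertex degrees.
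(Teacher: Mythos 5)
First, a point of comparison that matters here: the paper contains no proof of this statement at all --- it is recorded as an open conjecture imported from \cite{Zhang2026WangXDH}, so there is nothing of the paper's to measure your argument against. On its own terms, your setup is correct: $F$ is the forgotten index, the right-hand side is exactly $F(S_{n,m})$ for $S_{n,m}=K_k\vee\bigl(K_{1,a}\cup(n-k-a-1)K_1\bigr)$, the Kelmans compression to threshold graphs via majorization and convexity of $x^3$ is the standard first move, and your Stage~2 computation for $k'=k$ (the edge bounds $Z(J)\le m'^2+m'$ and $F(J)\le m'^3+m'$, with equality forcing $J$ to be a star plus isolated vertices) is sound and does recover $F(G^\ast)$ exactly.

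The genuine gap is precisely the step you flag, the case $k'<k$, and it is not merely difficult: the inequality you would need there is false, and so is the conjecture as transcribed in this paper. The paper's own Theorem~\ref{thm1conjn1} records the exception $(n,m)=(6,11)$; for those parameters $k=2$, $a=2$, and the conjectured bound equals $2\cdot 5^3+2\cdot 3^3+1\cdot 2^3+4^3=376$, whereas the connected threshold graph obtained from $K_5$ by attaching one pendant vertex (a graph with $k'=1<k=2$ universal vertices) has $F=5^3+4\cdot 4^3+1^3=382>376$. The same happens at $(7,16)$, $(7,17)$, $(8,22)$, the exceptional pairs of Theorems~\ref{thm1conjn1} and~\ref{thm2conjn1}. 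So a deficit in universal vertices \emph{can} be recovered by the edge surplus handed to $J$, your proposed induction cannot close, and any correct treatment must either restrict the range of $m$ or explicitly carve out these pairs --- which your plan, as written, does not do. A secondary issue in Stage~1: a Kelmans shift with $N(v)\cap N[u]=\varnothing$ isolates $v$ and can disconnect the graph, so preservation of connectedness needs an argument rather than an assertion; this is not cosmetic, since the unconstrained threshold maximizer is the quasi-complete graph, exactly the object responsible for the counterexamples above.
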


When comparing two graphic degree sequences $\mathscr{D}$ and $\mathscr{D}'$ that are both non-increasing, Proposition~\ref{pro2Preliminaries} asserts the existence of a sequence of intermediate degree sequences connecting $\mathscr{D}$ to $\mathscr{D}'$ with only limited alterations between successive members.

\begin{proposition}[\cite{ShuchaoLSYibing,ZhangXM2013Zhang}]\label{pro2Preliminaries}
Let $\mathscr{D} = (d_0, \ldots, d_{n-1})$ and $\mathscr{D}' = (d_0', \ldots, d_{n-1}')$ be two non-increasing graphic degree sequences satisfying $\mathscr{D} \lhd \mathscr{D}'$. Then there is a chain of graphic degree sequences $\mathscr{D}_1, \ldots, \mathscr{D}_k$ such that
$$\mathscr{D} \lhd \mathscr{D}_1 \lhd \cdots \lhd \mathscr{D}_k \lhd \mathscr{D}',$$
and for each consecutive pair $(\mathscr{D}_i, \mathscr{D}_{i+1})$, only two entries differ by 1.
\end{proposition}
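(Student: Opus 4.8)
The plan is to derive the statement from the classical discrete form of the Hardy--Littlewood--P\'olya (Muirhead) majorization lemma, combined with the fact that the family of graphic sequences is closed under ``balancing'' unit transfers. Recall first that $\mathscr{D}\lhd\mathscr{D}'$ means $\sum_i d_i=\sum_i d_i'=:2m$ together with $\sum_{i=0}^{j}d_i\le\sum_{i=0}^{j}d_i'$ for every $j$. By the discrete Muirhead lemma, two non-increasing integer sequences of equal sum with $\mathscr{D}\lhd\mathscr{D}'$ are always joined by a finite chain
\[
\mathscr{D}=\mathscr{F}_0\lhd\mathscr{F}_1\lhd\cdots\lhd\mathscr{F}_t=\mathscr{D}',
\]
in which each $\mathscr{F}_{i+1}$ is obtained from $\mathscr{F}_i$ by a single \emph{unit transfer}: one entry is increased by $1$ and some strictly larger entry is decreased by $1$ (equivalently, $\mathscr{F}_i$ is obtained from $\mathscr{F}_{i+1}$ by moving one unit from a part to a not-larger part). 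After re-sorting, such a transfer changes exactly two entries of the sequence, each by $1$, so a chain of this form already has the ``only two entries differ, by $1$'' feature required of $\mathscr{D}_1,\dots,\mathscr{D}_k$. Hence the entire difficulty of the Proposition is the claim that \emph{such a chain can be chosen inside the set of graphic sequences}.

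To handle that, I would isolate the following lemma: \emph{if $\mathscr{D}'$ is graphic and $\mathscr{B}$ is a non-increasing integer sequence majorized by $\mathscr{D}'$ with $\sum_i b_i=2m$, then $\mathscr{B}$ is graphic}. Granting it, the Proposition is immediate: every term $\mathscr{F}_i$ of the chain above is majorized by $\mathscr{D}'$ (the chain is increasing) and has sum $2m$, hence is graphic, and we take $\mathscr{D}_i:=\mathscr{F}_i$ for $1\le i\le k:=t-1$. To prove the lemma, apply the Muirhead lemma once more, now to $\mathscr{B}$ and $\mathscr{D}'$: it suffices to show that a \emph{single} balancing unit transfer preserves graphicness. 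So let $\mathscr{C}$ be graphic, realised by a simple graph $G$, and let $\mathscr{C}''$ be obtained from $\mathscr{C}$ by decreasing $\deg_G(u)$ by $1$ and increasing $\deg_G(v)$ by $1$, where $\deg_G(u)>\deg_G(v)$. Since $|N_G(u)|>|N_G(v)|$ and $v\notin N_G(v)$, a short case count (according to whether $uv\in E(G)$) produces a vertex $w\in N_G(u)\setminus\bigl(N_G(v)\cup\{v\}\bigr)$; then $G-uw+vw$ is simple and realises $\mathscr{C}''$. Iterating this $2$-switch along the transfer chain from $\mathscr{D}'$ down to $\mathscr{B}$ proves the lemma. (One could instead verify this step from the Erd\H{o}s--Gallai inequalities, but the switching argument is shorter and cleaner.)

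The step I expect to be the genuine obstacle is exactly this lemma --- the \emph{downward closedness of graphicness in the majorization order}. The purely combinatorial Muirhead chain need not, a priori, avoid non-graphic sequences; what saves the argument is that every intermediate sequence lies below the \emph{graphic} sequence $\mathscr{D}'$, so the edge-exchange structure of realisations can be invoked. The remaining points are routine bookkeeping: confirming that $\mathscr{D}\lhd\mathscr{D}'$ forces equal sums (so that unit transfers, rather than larger jumps, suffice), and confirming that after re-sorting a unit transfer changes precisely two positions of the degree sequence by $1$ each --- the incremented value rising to the top of its equal-value block and the decremented value sinking to the bottom of its block.
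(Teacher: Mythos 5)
The paper offers no proof of Proposition~\ref{pro2Preliminaries}; it is imported verbatim as a preliminary from \cite{ShuchaoLSYibing,ZhangXM2013Zhang}, so there is no in-paper argument to compare yours against. Judged on its own, your proposal is sound and is essentially the proof found in that literature: the Hardy--Littlewood--P\'olya/Muirhead decomposition of a majorization into unit transfers produces a chain whose consecutive sorted terms differ in exactly two entries by $1$, and the only genuine content is the lemma you isolate, namely that graphicness is downward closed in the dominance order on nonnegative integer sequences of fixed even sum (the Ruch--Gutman ideal property). Your $2$-switch argument for that lemma checks out: whether or not $uv\in E(G)$, the inequality $|N_G(u)|>|N_G(v)|$ does furnish a vertex $w\in N_G(u)\setminus\bigl(N_G(v)\cup\{v\}\bigr)$, and $G-uw+vw$ is simple and realises the transferred sequence; iterating down the chain from $\mathscr{D}'$ gives the lemma, and with it every intermediate term is graphic.

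Two small repairs. First, your description of the chain reverses the transfer direction: to pass \emph{up} the order from $\mathscr{F}_i$ to $\mathscr{F}_{i+1}$ one must increase a larger entry and decrease a smaller one; the balancing move you describe (increase an entry, decrease a strictly larger one) is the \emph{downward} step, which is the direction in which you actually apply the switching lemma, so nothing breaks, but the sentence and its parenthetical currently contradict each other (and ``not-larger part'' should read ``strictly smaller part'', since a transfer between equal parts moves the wrong way). Second, a unit transfer between entries differing by exactly $1$ leaves the sorted multiset unchanged, so such steps must be discarded from the Muirhead chain to keep the relations $\lhd$ strict; this is implicit in your re-sorting remark but worth stating, since the proposition asserts a strictly increasing chain including the end pairs $(\mathscr{D},\mathscr{D}_1)$ and $(\mathscr{D}_k,\mathscr{D}')$.
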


Through Lemma~\ref{lem1javr}, it shows that the minimal $MKG$ trees are those where no vertex except the root has degree 3 or higher—likely ``caterpillar-like'' or ``path-like'' structures rooted at $\rho$ with limited branching away from it.
\begin{lemma}[\cite{Dehgardi2025ATXari}]~\label{lem1javr}
Suppose that $T \in T_{n, \Delta}$ is a rooted tree whose root is on a vertex $\rho$ with $d_{T}(\rho)=\Delta$. If $T$ has a vertex of degree at least 3 except $\rho$, then there exists a tree $T^{\prime} \in T_{n, \Delta}$ such that $M K G\left(T^{\prime}\right)<M K G(T)$.
\end{lemma}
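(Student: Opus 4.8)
\noindent\textit{Proof idea.} The plan is to use one local ``path–merging'' move at a deepest high‑degree vertex, exactly in the spirit of Proposition~\ref{pro0Preliminaries} and the $HM_1,HM_2$ propositions of Section~\ref{sec3}. First note the claim is vacuous unless $\Delta\ge 3$, so assume $\Delta\ge 3$. Rooting $T$ at $\rho$, among all vertices $w\neq\rho$ with $d_T(w)\ge 3$ I would choose one, say $w$ with $k:=d_T(w)\ge 3$, at maximum distance from $\rho$. By this maximality every proper descendant of $w$ has degree at most $2$, so each of the $k-1\ge 2$ subtrees hanging below $w$ is a path; fix two of them, $Q_1=w a_1 a_2\cdots a_\ell$ and $Q_2=w b_1 b_2\cdots b_m$ with $1\le\ell\le m$ and $a_\ell,b_m$ leaves.

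Next I would define $T'$ from $T$ by deleting the edge $w a_1$ and adding the edge $b_m a_1$, so that the two pendant paths $Q_1,Q_2$ at $w$ are replaced by the single pendant path $w b_1\cdots b_m a_1\cdots a_\ell$. Then $T'$ is again a tree on $n$ vertices; the only vertex degrees that change are $d(w)\colon k\mapsto k-1$ and $d(b_m)\colon 1\mapsto 2\le\Delta$; and since neither $w$ nor the touched edges are incident with $\rho$, we still have $d_{T'}(\rho)=\Delta$, which remains the maximum. Hence $T'\in T_{n,\Delta}$ (again rooted at $\rho$). The move also strictly decreases either the number of vertices of degree $\ge 3$ or the degree of $w$, which is why iterating it drives one to the extremal tree, but here only the single comparison $MKG(T')<MKG(T)$ is needed.

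Finally I would compute the ratio $MKG(T')/MKG(T)$ edge by edge, using that $MKG$ is a product of positive, strictly degree‑increasing factors. The only edges whose factor changes lie among: the at most $k-1$ edges still incident with $w$ (their factors shrink, because $d(w)$ drops); the deleted edge $w a_1$ (its factor, which is $>1$, simply leaves the product); the inserted edge $b_m a_1$; and the single edge of $Q_2$ incident with $b_m$ (its factor grows, because $d(b_m)$ rises from $1$ to $2$). So the ratio is a product of boundedly many explicit square‑root terms. Writing $k^2-(k-1)^2=2k-1\ge 5$, each of the $\ge 2$ ``shrink'' factors at $w$ is bounded below $1$ by a fixed amount, and the swap of $w a_1$ (endpoint degree $k\ge 3$) for $b_m a_1$ (endpoint degree $2$) contributes a factor $<1$ as well; I would then show that the product of these contributions strictly dominates the bounded growth caused by $d(b_m)\colon 1\mapsto 2$ on the one edge of $Q_2$ at $b_m$. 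By monotonicity of each factor in $k$, $\ell$ and $m$ it suffices to verify this in the extreme configuration $k=3$, $\ell=m=1$, where it is a single numerical inequality among square roots. This gives $MKG(T')<MKG(T)$.

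\noindent\textit{Main obstacle.} The delicate point is precisely that last quantitative estimate. Because each $MKG$‑factor couples the degrees of an edge together with an auxiliary incident degree, simultaneously lowering $d(w)$ and raising $d(b_m)$ perturbs several square‑root terms at once, and one must check that the net effect is a strict decrease \emph{uniformly} in $k\ge 3$ and in the path lengths $\ell\le m$; the borderline cases are $k=3$ with the shortest legs. I expect this to be a short but mildly case‑split computation, cleanest when one first settles $k=3$, $\ell=m=1$ by hand and then upgrades to all admissible $k,\ell,m$ via monotonicity of the factors.
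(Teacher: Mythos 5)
First, note that the paper itself contains no proof of Lemma~\ref{lem1javr}: it is stated as an imported result from \cite{Dehgardi2025ATXari} with no argument attached, so there is no in-paper proof to compare yours against. That said, your surgery is the standard one for results of this type and is surely the intended one: pick a vertex $w\neq\rho$ with $d_T(w)=k\ge 3$ at maximum depth, note that its $k-1\ge 2$ pendant subtrees are paths, and splice one pendant path onto the end of another. Your bookkeeping of which degrees change (only $w$: $k\mapsto k-1$ and $b_m$: $1\mapsto 2$), and the check that $T'\in T_{n,\Delta}$, are correct.

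The gap is in the quantitative core, which is the entire content of the lemma, and the sketch you give of it contains assertions that do not survive inspection. (i) The claim that each shrink factor on an edge still incident with $w$ is ``bounded below $1$ by a fixed amount'' because $k^2-(k-1)^2=2k-1\ge 5$ is false: the relevant quantity is a ratio of sums of square roots, and for the parent edge $pw$ with $d(p)=2$ it equals $\bigl(\sqrt{4+(k-1)^2}+(k-1)\sqrt 2\,\bigr)/\bigl(\sqrt{4+k^2}+k\sqrt 2\,\bigr)\to 1$ as $k\to\infty$. What rescues large $k$ is a different mechanism, namely that the deleted factor of $wa_1$ grows linearly in $k$ while the inserted factor of $b_ma_1$ is an absolute constant; so the argument must be split by the size of $k$, and the asserted ``monotonicity'' reduction to $k=3$, $\ell=m=1$ is not justified. (ii) The swap alone does not beat the growth at $b_m$: for $k=3$ and $\ell,m\ge 2$ the swap contributes $4\sqrt2/(3\sqrt2+\sqrt{13})\approx 0.72$ while the edge $b_{m-1}b_m$ grows by $4\sqrt2/(\sqrt5+\sqrt2)\approx 1.55$, and $0.72\times 1.55>1$; the inequality genuinely needs the $k-1$ remaining edges at $w$, so the ``single numerical inequality'' you defer to must include all of them and must be checked in each of the finitely many local configurations ($\ell=1$ versus $\ell\ge2$, $m=1$ versus $m\ge2$, and the degree of $w$'s parent). (iii) When $m=1$ the edge $wb_1$ falls into two of your categories simultaneously and both of its endpoint degrees change, so ``its factor shrinks because $d(w)$ drops'' is not by itself a justification there (it does shrink, but that needs its own computation). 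None of this is fatal --- the inequality does hold for your $T'$ --- but as written the proposal stops exactly where the work begins.
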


\begin{lemma}[\cite{Dehgardi2025ATXari}]~\label{lem2javr}
Let $\mathscr{S}$ be a starlike tree of order $n$ and $\Delta \geq 3$ legs. If $\mathscr{S}$ has a leg of length 1 and another leg of length at least 3, then there exists a starlike tree $\mathscr{S}^{\prime}$ of order $n$ and $\Delta$ legs for which $M K G(\mathscr{S})>M K G\left(\mathscr{S}^{\prime}\right)$.
\end{lemma}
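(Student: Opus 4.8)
The plan is to use a single \emph{leaf\hyp shifting} transformation that balances the two extreme legs, and then to reduce the comparison of $MKG$\hyp values to an elementary one\hyp variable inequality in $\Delta$.

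First I would fix notation: let $c$ be the centre of $\mathscr{S}$, let $P_{1}=c\,v$ be a leg of length $1$, and let $P_{2}=c\,x_{1}x_{2}\cdots x_{\ell}$ be a leg of length $\ell\ge 3$, with $x_{\ell}$ its leaf and $x_{\ell-1}$ the neighbour of $x_{\ell}$. Form $\mathscr{S}'$ from $\mathscr{S}$ by deleting the edge $x_{\ell-1}x_{\ell}$ and adding the edge $v\,x_{\ell}$. Then $\mathscr{S}'$ is again a tree on $n$ vertices, $c$ still has degree $\Delta$, and $\mathscr{S}'$ is starlike with exactly $\Delta$ legs: the leg through $v$ now has length $2$, the leg through $x_{1}$ now has length $\ell-1\ge 2$, and every other leg is untouched. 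Hence $\mathscr{S}'\in T_{n,\Delta}$, so it is a legitimate candidate.

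Next I would pin down which edges see a change in the degree of an endpoint. Only $v$ (degree $1\to 2$) and $x_{\ell-1}$ (degree $2\to 1$) change degree, so the affected edges are $cv$, the new edge $v x_{\ell}$, the deleted edge $x_{\ell-1}x_{\ell}$, and $x_{\ell-2}x_{\ell-1}$ — and here $x_{\ell-2}\neq c$ precisely because $\ell\ge 3$, so this last one is always a leg\hyp interior edge with $\deg(x_{\ell-2})=2$ in both trees. Recording each edge by its unordered pair of endpoint degrees, the multiset of these pairs is $\{(\Delta,1),(2,1),(2,2)\}$ in $\mathscr{S}$ and $\{(\Delta,2),(2,1),(2,1)\}$ in $\mathscr{S}'$, while every other edge keeps the same pair. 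Since $MKG$ is a product over edges of a factor — call it $\phi(\deg u,\deg v)$ — that depends only on the two endpoint degrees (the auxiliary quantity $\deg(w)$ appearing in~\eqref{eq2Introduction} being the edge degree $\deg u+\deg v-2$), cancelling the common factors yields
\[
\frac{MKG(\mathscr{S})}{MKG(\mathscr{S}')}\;=\;\frac{\phi(\Delta,1)\,\phi(2,2)}{\phi(\Delta,2)\,\phi(2,1)}.
\]

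It then remains to prove this ratio exceeds $1$ for every integer $\Delta\ge 3$, i.e.\ $\log\phi(2,2)-\log\phi(2,1)>\log\phi(\Delta,2)-\log\phi(\Delta,1)$. The clean route is to show that the map $t\mapsto \log\phi(t,2)-\log\phi(t,1)$ is strictly decreasing on $[2,\infty)$ — a short calculus computation from the explicit form of $\phi$ in~\eqref{eq2Introduction} — and then invoke $2<\Delta$ to get both the inequality and its strictness. I expect this elementary inequality to be the only genuine obstacle: one checks that the \emph{additive} $KG$\hyp Sombor analogue of the displayed identity degenerates to an equality at $\Delta=3$, so the argument really does rely on the multiplicative form, and a little care is needed both there and in the boundary sub\hyp case $\ell=3$ (where $P_{2}'$ has length exactly $2$) to be sure strictness is not lost. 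Finally, although one application of the move already produces the required $\mathscr{S}'$, I would remark that iterating it strictly decreases $MKG$ at each step and terminates at a starlike tree with no leg of length $1$ or no leg of length $\ge 3$ — the shape of the $MKG$\hyp minimiser among $\Delta$\hyp legged spiders, in parallel with Proposition~\ref{pro0Preliminaries} for the hyper\hyp Zagreb indices.
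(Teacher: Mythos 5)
The paper does not actually prove Lemma~\ref{lem2javr}: it is imported verbatim from \cite{Dehgardi2025ATXari} and stated without proof, so there is no in-paper argument to compare yours against. Judged on its own terms, your proposal is the right argument and essentially the standard one for results of this type (compare Proposition~\ref{pro0Preliminaries}): the leaf-shifting move is well defined, $\mathscr{S}'$ is indeed a starlike tree of order $n$ with $\Delta$ legs, and your bookkeeping of the affected edges is correct --- the multiset of endpoint-degree pairs changes exactly from $\{(\Delta,1),(2,2),(2,1)\}$ to $\{(\Delta,2),(2,1),(2,1)\}$, with the case $\ell=3$ causing no trouble since $x_{\ell-2}=x_1$ still has degree $2$. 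Your reading of the edge factor $\phi(a,b)=\sqrt{a^2+(a+b-2)^2}+\sqrt{b^2+(a+b-2)^2}$ is confirmed by the explicit expressions surrounding \eqref{eqqjasn1}, whose factors $\sqrt{\Delta^2+4}+\sqrt{2}\Delta$, $\sqrt{5}+\sqrt{2}$, $4\sqrt{2}$ and $\sqrt{\Delta^2+(\Delta-1)^2}+\sqrt{1+(\Delta-1)^2}$ are precisely $\phi(\Delta,2)$, $\phi(2,1)$, $\phi(2,2)$ and $\phi(\Delta,1)$. The one step you assert rather than prove is the final inequality $\phi(\Delta,1)\,\phi(2,2)>\phi(\Delta,2)\,\phi(2,1)$ for $\Delta\ge 3$; it is true (at $\Delta=3$ the two sides are roughly $33.0$ versus $28.7$, and the ratio $\phi(\Delta,2)/\phi(\Delta,1)$ tends to $1$ while $\phi(2,2)/\phi(2,1)=4\sqrt{2}/(\sqrt{5}+\sqrt{2})\approx 1.55$), but the monotonicity of $t\mapsto\log\phi(t,2)-\log\phi(t,1)$ still needs to be carried out; a crude alternative is to bound $\phi(\Delta,2)/\phi(\Delta,1)\le\phi(3,2)/\phi(3,1)=(3\sqrt2+\sqrt{13})/(\sqrt{13}+\sqrt5)<1.35$ for all $\Delta\ge3$ and compare with $1.54$. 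Your observation that the additive analogue degenerates to an exact equality at $\Delta=3$ (both sides equal $\sqrt{13}+\sqrt5+4\sqrt2$) is correct and a genuinely useful sanity check that the multiplicative form is essential here.
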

For any connected graph $G$ of order $n$ and maximum degree $\Delta$, we have $M K G(G) \geq\left(\sqrt{\Delta^{2}+4}+\sqrt{2} \Delta\right)^{\Delta}(\sqrt{5}+\sqrt{2})^{\Delta}(4 \sqrt{2})^{n-2 \Delta-1},$
when $\Delta \leq (n-1)/2$. Then
\begin{equation}~\label{eqqjasn1}
M K G(G) \geq\left(\sqrt{\Delta^{2}+(\Delta-1)^{2}}+\sqrt{1+(\Delta-1)^{2}}\right)^{2 \Delta+1-n}\left(\sqrt{\Delta^{2}+4}+\sqrt{2} \Delta\right)^{n-\Delta-1}(\sqrt{5}+\sqrt{2})^{n-\Delta-1},
\end{equation}
when $(n-1)/2$. In the context of tree degree sequences with identical order, Theorem~\ref{thm1Preliminaries} shows that if $\mathscr{D} \lhd \mathscr{D}'$, the last tree corresponding to $\mathscr{D}$ is precedentially less in the $S$-order than the last tree for $\mathscr{D}'$.

\begin{theorem}[\cite{ShuchaoLSYibing}]\label{thm1Preliminaries}
Let $\mathscr{D}$ and $\mathscr{D}'$ be distinct tree degree sequences of the same order. Denote by $T^*$ and $(T')^*$ the last trees in the $S$-order within classes $\mathscr{T}^*_{\mathscr{D}}$ and $\mathscr{T}^*_{\mathscr{D}'}$, respectively. If $\mathscr{D} \lhd \mathscr{D}'$, then
$$T^* \prec_s (T')^*.$$
\end{theorem}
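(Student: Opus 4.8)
The plan is to exploit the fact that the low‑order spectral moments of a tree are almost completely rigid, so that the $S$‑order comparison between $T^{*}$ and $(T')^{*}$ is decided already at the fourth moment. Using $S_k(T)=\operatorname{tr}\big(A(T)^{k}\big)=M_k(T)$, the number of closed walks of length $k$, one has for \emph{any} tree $T$ on $n$ vertices that $S_0(T)=n$, $S_1(T)=0$, $S_2(T)=2(n-1)$ (twice the number of edges), and $S_3(T)=0$ (a tree has no triangles). Since $\mathscr{D}$ and $\mathscr{D}'$ have the same order $n$, this already gives $S_i(T^{*})=S_i\big((T')^{*}\big)$ for $i=0,1,2,3$, whatever the trees $T^{*}$ and $(T')^{*}$ happen to be.

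The next step is to compute $S_4$. A closed walk of length $4$ from a vertex $v$ in a tree has one of two shapes: $v,u,v,u',v$, returning to $v$ at its midpoint, or $v,u,w,u,v$ with $u\sim v$, $w\sim u$, $w\ne v$ — a genuine $4$‑cycle through $v$ being impossible. Counting these yields $d(v)^{2}+\sum_{u\sim v}\big(d(u)-1\big)$ closed $4$‑walks at $v$, and summing over $v\in V(T)$ gives
\[
S_4(T)\;=\;2\sum_{v\in V(T)}d(v)^{2}\;-\;2(n-1)\;=\;2\,M_1(T)-2(n-1),
\]
where $M_1$ is the first Zagreb index. The crucial observation is that $S_4(T)$ depends on $T$ only through its degree sequence, so $S_4(T^{*})=2M_1(\mathscr{D})-2(n-1)$ and $S_4\big((T')^{*}\big)=2M_1(\mathscr{D}')-2(n-1)$.

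Now I would bring in the hypothesis $\mathscr{D}\lhd\mathscr{D}'$. Since $t\mapsto t^{2}$ is strictly convex, $\mathscr{D}\mapsto\sum_i d_i^{2}=M_1(\mathscr{D})$ is strictly Schur‑convex, so distinct comparable degree sequences of the same length (hence the same sum) satisfy $M_1(\mathscr{D})<M_1(\mathscr{D}')$; alternatively, one decomposes $\mathscr{D}\lhd\mathscr{D}'$ into the chain of elementary two‑coordinate unit transfers supplied by Proposition~\ref{pro2Preliminaries} and notes that each transfer strictly increases $\sum_i d_i^{2}$. Either way, $S_4(T^{*})<S_4\big((T')^{*}\big)$ while $S_i(T^{*})=S_i\big((T')^{*}\big)$ for all $i<4$, which is exactly the assertion $T^{*}\prec_s (T')^{*}$.

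The step that needs genuine care is the strict inequality $M_1(\mathscr{D})<M_1(\mathscr{D}')$: one must be sure the relation $\lhd$ is (at least) majorization, so that distinct comparable tree degree sequences of equal order cannot share the same sum of squares. If $\lhd$ were weak enough to permit $M_1(\mathscr{D})=M_1(\mathscr{D}')$, then $S_4$ would no longer decide the order and one would have to descend to the next non‑vanishing even moment $S_6$ (all odd moments vanish for a tree); there the count of closed $6$‑walks is \emph{not} a function of the degree sequence alone, and the extremality of the \emph{last} trees $T^{*},(T')^{*}$ in the $S$‑order — via the alternating‑greedy‑tree description of Proposition~\ref{pro1Preliminaries} and Definition~\ref{defgreedytree} — would have to be invoked to compare $S_6$ across the two degree classes. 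Under the majorization reading, however, the argument terminates at $S_4$, and one in fact obtains the stronger conclusion $T\prec_s T'$ for \emph{every} $T\in\mathscr{T}^{*}_{\mathscr{D}}$ and $T'\in\mathscr{T}^{*}_{\mathscr{D}'}$, so the hypothesis that $T^{*}$ and $(T')^{*}$ are last in $S$‑order is not actually needed.
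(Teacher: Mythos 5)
The paper states this theorem purely as a citation from \cite{ShuchaoLSYibing} and supplies no proof of its own, so there is nothing internal to compare against; your argument is correct and coincides with the standard proof in that source, reducing the comparison to $S_4(T)=2M_1(T)-2(n-1)$ (valid since a tree has no $4$-cycles, so closed $4$-walks are counted exactly by $\sum_v d(v)^2+\sum_u d(u)(d(u)-1)$) together with the strict increase of $\sum_i d_i^2$ under majorization, which is indeed what $\lhd$ means here in view of the unit-transfer chain of Proposition~\ref{pro2Preliminaries}. Your closing observation is also sound: because $S_0,\dots,S_3$ are determined by $n$ alone and $S_4$ depends only on the degree sequence, the conclusion holds for arbitrary representatives of $\mathscr{T}^*_{\mathscr{D}}$ and $\mathscr{T}^*_{\mathscr{D}'}$, so the hypothesis that $T^*$ and $(T')^*$ are last in the $S$-order is not actually used.
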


For graphs with maximal $F$-index within $\mathcal{G}_{n,m}$, Theorems~\ref{thm1conjn1} and \ref{thm2conjn1} delineate degree sequence characterizations depending on the edge density relative to $n$. For $2n - 3 < m \leq 3n - 6$, graphs achieving this maximum coincide with a special graph $S_{n,m}$ except for $(n,m) = (6,11)$. If $n - 1 < m \leq 2n - 3$, the maximum $F$-index graph features degree sequence $(n-1, d_2, \ldots, d_2, 1, 1)$. For $3n - 6 < m \leq 4n - 10$, the extremal graph is again $S_{n,m}$ with specified exceptions.

\begin{theorem}[\cite{Zhang2026WangXDH}]\label{thm1conjn1}
Let $n, m$ be integers with $2n-3 < m \leq 3n-6$. If $G \in \mathcal{G}_{n,m}$ has maximum $F$-index with degree sequence $d = (d_1, \ldots, d_n)$, then $G = S_{n,m}$ except when $(n,m) = (6,11)$.
\end{theorem}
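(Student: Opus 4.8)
The plan is to work entirely with the degree sequence, exploiting that $F(G)=\sum_{v\in V(G)}\deg(v)^{3}$ depends only on the degree sequence of $G$ and is strictly Schur-convex: if the non-increasing degree sequence of $G'$ strictly majorizes that of $G$, then $F(G')>F(G)$. The first step is a reduction to threshold graphs. I would fix an extremal $G\in\mathcal G_{n,m}$ and repeatedly apply a Kelmans-type compression: pick vertices $u,v$ and replace each edge $vw$ with $w\in N(v)\setminus(N(u)\cup\{u\})$ by the edge $uw$. Such a move preserves $n$ and $m$, keeps $G$ connected (the move can be chosen so as not to isolate $v$, since a dominating or common neighbour is available in this density range), and produces a degree sequence that strictly majorizes the previous one unless nothing is moved. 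By extremality $G$ admits no such improving move, and a connected graph with this property is a threshold graph; the elementary degree-sequence modifications involved are exactly those of Proposition~\ref{pro2Preliminaries} (two coordinates changing by $1$ at a time), and the iteration terminates precisely at threshold sequences.

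The second step picks out the winning threshold graph in $\mathcal G_{n,m}$ for $2n-3<m\le 3n-6$. Writing $m=2n-3+a$ with $1\le a\le n-4$ (the boundary value $m=3n-6$ being covered by the threshold graph $K_{3}\vee\overline{K}_{n-3}$, in which three vertices have degree $n-1$ and $n-3$ have degree $3$), the candidate is $S_{n,m}=K_{2}\vee\bigl(K_{1,a}\cup\overline{K}_{n-3-a}\bigr)$, which is a threshold graph --- hence a unigraph, so that the conclusion ``$G=S_{n,m}$'' makes sense --- with two vertices of degree $n-1$, one of degree $a+2$, $a$ of degree $3$, and $n-3-a$ of degree $2$, so that
\[
F(S_{n,m})=2(n-1)^{3}+(a+2)^{3}+27a+8(n-3-a).
\]
Any competing connected threshold realization of the same order and size either keeps two vertices of degree $n-1$ while replacing the pendant star on the remaining vertices by a denser near-clique, or keeps only one vertex of degree $n-1$. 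In the first case the convexity of $x^{3}$ makes $F$ strictly smaller (spreading the $a$ surplus edges into a star beats packing them into a clique); in the second case the convexity bound underlying Conjecture~\ref{conjn1}, applied to the dense block on $n-1$ vertices, gives $F\le (n-1)^{3}+O\!\bigl(m^{3/2}\bigr)$, which stays strictly below $2(n-1)^{3}+\Theta(1)$ once $n$ is large enough. Quantifying ``large enough'', the inequality holds for all $n\ge 7$ across the range and, for $n=6$, at $m\in\{10,12\}$ but fails at $m=11$.

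The third step handles $(n,m)=(6,11)$ by direct inspection. Here the ``one big vertex'' competitor is the kite $K_{1}\vee(K_{4}\cup\overline{K}_{1})$, with degree sequence $(5,4,4,4,4,1)$ and $F=382$, which strictly beats $F(S_{6,11})=376$ coming from $(5,5,4,3,3,2)$; both degree sequences are uniquely realizable up to isomorphism, and a short finite check over the remaining graphic sequences on $6$ vertices with $11$ edges shows that none attains $F$ above $382$, so the kite is the true maximiser. These two sequences are incomparable under majorization, which is exactly why the compression argument of the first step does not force $S_{6,11}$ here.

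The main obstacle is the second step: showing $S_{n,m}$ beats \emph{every} competing connected threshold graph of the prescribed order and size. Because the relevant degree sequences are not linearly ordered by majorization, Schur-convexity alone does not settle it --- several incomparable candidates must be compared by explicit evaluation of $\sum_{v}\deg(v)^{3}$ --- and one must pin down the precise threshold in $n$ (and, at $n=6$, in $m$) where the ``two dominating vertices'' configuration overtakes the ``one dominating vertex plus dense block'' one. Maintaining connectedness through the compressions of the first step, and confirming that $(6,11)$ is the \emph{unique} exception rather than one of several, is the delicate part.
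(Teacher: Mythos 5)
This theorem is not proved in the paper at all: it is imported verbatim from \cite{Zhang2026WangXDH} as background, so there is no internal proof to compare your attempt against. Judged on its own terms, your outline follows the standard majorization route and gets the key data right --- the identification $S_{n,m}=K_2\vee\bigl(K_{1,a}\cup\overline{K}_{n-3-a}\bigr)$ with $m=2n-3+a$, the value $F(S_{n,m})=2(n-1)^3+(a+2)^3+27a+8(n-3-a)$, and the exceptional graph at $(6,11)$ with degree sequence $(5,4,4,4,4,1)$ and $F=382>376=F(S_{6,11})$ all check out numerically --- but it is not yet a proof.

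The genuine gap is your second step, and you name it yourself. After the Kelmans-type reduction, the surviving candidates are connected graphs whose degree sequences are \emph{maximal} under majorization for the given $(n,m)$; these sequences are pairwise incomparable, so Schur-convexity of $\sum_v\deg(v)^3$ gives you nothing further, and the whole content of the theorem is the explicit comparison of $F$ across this family. Your treatment of it consists of two unproved assertions: that among two-dominating-vertex threshold graphs the pendant-star configuration wins (plausible, since $(a,1,\dots,1)$ majorizes the degree sequence of any other graph with $a$ edges on the residual vertices, but you should say that rather than appeal vaguely to ``convexity''), and that every one-dominating-vertex competitor satisfies $F\le(n-1)^3+O(m^{3/2})$. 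The latter bound is not justified and is too crude to locate the exceptional pairs: the strongest one-apex competitors are not small cliques but configurations with a second vertex of degree close to $n-1$, which is exactly why the companion result (Theorem~\ref{thm2conjn1}) acquires exceptions at $(7,16)$, $(7,17)$, $(8,22)$ and why $(6,11)$ is exceptional here. Until you (i) enumerate or parametrize the connected threshold graphs in $\mathcal G_{n,m}$ for $2n-3<m\le 3n-6$, (ii) prove the inequality $F(\cdot)<F(S_{n,m})$ for each parametrized family with an explicit threshold in $n$, and (iii) verify connectivity preservation in the compression step (a dominating vertex need not exist before the reduction is complete, so ``a common neighbour is available'' needs an argument), the claim that $(6,11)$ is the \emph{unique} exception remains an assertion rather than a conclusion.
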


\begin{theorem}[\cite{Zhang2026WangXDH}]\label{thm2conjn1}
Let $n, m$ be integers satisfying $3n-6 < m \leq 4n-10$. If $G \in \mathcal{G}_{n,m}$ has maximum $F$-index with degree sequence $d = (d_1, \ldots, d_n)$, then $G = S_{n,m}$ except for $(n,m) \in \{(7,16), (7,17), (8,22)\}$.
\end{theorem}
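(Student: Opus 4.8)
The plan is to establish the (equivalent) upper bound of Conjecture~\ref{conjn1} together with its equality characterisation in the range $3n-6<m\le 4n-10$, by first compressing an $F$-maximal graph into a threshold graph and then optimising $F$ over threshold degree sequences. Recall that $F(G)=\sum_{v\in V(G)}\deg(v)^3=\sum_{\{u,v\}\in E(G)}\bigl(\deg(u)^2+\deg(v)^2\bigr)$, and that in the parametrisation $m=nk-\binom{k+1}{2}+a$ with $0\le a<n-k-1$ the stated range corresponds exactly to $k=3$ with $1\le a\le n-5$, together with the single pair $(k,a)=(4,0)$. The graph $S_{n,m}$ is the complete split graph $CS_{n,k}=K_k\vee\overline{K_{n-k}}$ with $a$ additional edges inserted inside the independent set forming a star $K_{1,a}$; it is itself a threshold graph, and a direct computation gives its degree sequence $\bigl((n-1)^{(k)},\,k+a,\,(k+1)^{(a)},\,k^{(n-k-a-1)}\bigr)$ and the value $F(S_{n,m})=k(n-1)^3+a(k+1)^3+(n-k-a-1)k^3+(k+a)^3$ appearing in the conjectured bound.

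First I would reduce to threshold graphs by a Kelmans-type compression. Let $G$ be $F$-maximal in $\mathcal{G}_{n,m}$. Suppose there are vertices $u,v$ with $\deg(u)\ge\deg(v)$ and a vertex $w\in N(v)\setminus N[u]$ such that deleting $vw$ and adding $uw$ keeps the graph connected; perform this move. Since
\[
(\deg(u)+1)^3+(\deg(v)-1)^3-\deg(u)^3-\deg(v)^3=3\bigl(\deg(u)+\deg(v)\bigr)\bigl(\deg(u)-\deg(v)+1\bigr)>0,
\]
the value of $F$ strictly increases, contradicting maximality. Using the standard fact that in a connected graph with $m\ge n$ a suitable such compression can always be carried out while preserving connectivity (rerouting along a shortest $u$–$v$ path when necessary), the $F$-maximal $G$ admits no compression and is therefore a connected threshold graph, i.e. has no induced $P_4$, $C_4$ or $2K_2$ and is obtained from a single vertex by successively adding isolated or dominating vertices.

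Next I would optimise $F$ over connected threshold graphs with the given $n$ and $m$. Such a graph is determined by its degree sequence, and since $F$ is Schur-convex it is enough to locate the maximal elements in the majorization order among the degree sequences realisable in this way. One checks that in the range under consideration there are only two candidates for such a maximal sequence: that of $S_{n,m}$, built on $k$ universal vertices, and the sequence of $CS_{n,k+1}$ with a ``co-star'' of $n-k-a-2$ edges deleted, built on $k+1$ nearly-universal vertices. Writing out $F$ of the two families as polynomials in $n$ and $a$ and comparing them shows that $S_{n,m}$ strictly dominates for all sufficiently large $n$ and every admissible $a$, so that $G=S_{n,m}$ for all but finitely many small orders.

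Finally I would settle the remaining small orders $n\in\{6,7,8\}$ by direct enumeration of all connected threshold graphs with $m$ in the prescribed range: this computation isolates $(n,m)\in\{(7,16),(7,17),(8,22)\}$ as exactly the pairs for which the $(k+1)$-universal candidate ties or beats $S_{n,m}$, and confirms $G=S_{n,m}$ in every other case. The main obstacle is the third step — correctly identifying the majorization-maximal \emph{connected} threshold degree sequence and sharpening the two-family comparison enough to exhibit precisely these three exceptions and no others; the connectivity-preserving compression lemma in the second step is routine but also needs care, and the rest is convexity bookkeeping.
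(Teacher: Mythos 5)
You should first be aware that the paper you are working from does not prove Theorem~\ref{thm2conjn1} at all: it is imported verbatim from \cite{Zhang2026WangXDH} as a cited result, so there is no in-paper argument to compare your proposal against. Judged on its own terms, your outline follows the route one would expect (and that the cited source in fact takes in spirit): reduce an $F$-maximal graph to a threshold graph by an edge-rotation that increases $\sum_v \deg(v)^3$, then compare the few majorization-maximal realisable degree sequences. Your local computation $3(\deg(u)+\deg(v))(\deg(u)-\deg(v)+1)>0$ is correct, and your identification of the range $3n-6<m\le 4n-10$ with $k=3$, $1\le a\le n-5$ together with $(k,a)=(4,0)$ checks out against the parametrisation $m=nk-\binom{k+1}{2}+a$.

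However, as a proof the proposal has two genuine gaps. First, the reduction to threshold graphs is not routine in the connected setting: the rotation can be blocked when every admissible move disconnects the graph, and the fix you gesture at (``rerouting along a shortest path'') is not a proof; the clean way out is to maximise over all graphs without isolated vertices and observe that the resulting threshold graph is automatically connected, but that substitution must actually be made. Second, and more seriously, the entire content of the theorem --- that the extremal graph is $S_{n,m}$ \emph{except} for exactly $(7,16)$, $(7,17)$, $(8,22)$ --- lives in your third step, which is asserted rather than carried out. Schur-convexity of $x\mapsto x^3$ reduces the problem to the majorization-maximal elements among connected threshold degree sequences with the given $n$ and $m$, but these maximal elements are in general pairwise incomparable, and your claim that there are ``only two candidates'' is unproved; even granting it, the polynomial comparison of $F$ on the two families and the isolation of the three exceptional pairs is exactly the part you defer. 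Until that comparison is written out (and the small orders $n\le 8$ enumerated), the proposal is a plausible strategy, not a proof of the stated equality cases.
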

Let $C = c_1 c_2 \cdots c_n c_1$ be a longest cycle in $G$, as established by Lemma~\ref{lem1Bipartite}. If $P$ is a path in $G - C$ with endpoints $u$ and $v$ and length $p \geq 1$, then the following holds:

\begin{lemma}[\cite{B-Jackson}]\label{lem1Bipartite}
Define $Q(u,v)$ as the set, and $q(u,v)$ as the number of ordered pairs $(c_i, c_j)$ of vertices on $C$ such that $c_i$ is adjacent to one of $\{u,v\}$, $c_j$ to the other, and
\[
(N(u) \cup N(v)) \cap \{ c_{i+1}, c_{i+2}, \ldots, c_{j-1} \} = \varnothing.
\]
If $q(u,v) \geq 2$, then
\[
|V(C)| \geq 2 |N_C(u) \cup N_C(v)| + p \cdot q(u,v).
\]
\end{lemma}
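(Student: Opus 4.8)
The plan is to carry out the standard longest-cycle exchange argument along the arcs of $C$ determined by the $C$-neighbors of $u$ and $v$. Write $C=c_1c_2\cdots c_nc_1$ with $n:=|V(C)|$, write $P=w_0w_1\cdots w_p$ with $w_0=u$ and $w_p=v$, and put $X:=N_C(u)\cup N_C(v)$ and $s:=|X|$. The emptiness condition in the definition of $Q(u,v)$ says that an ordered pair $(c_i,c_j)$ can occur only if $c_j$ is the first vertex of $X$ met after $c_i$ when $C$ is traversed forward; so the two coordinates of every pair of $Q(u,v)$ are cyclically consecutive vertices of $X$ on $C$, and hence $q(u,v)\ge 2$ forces $s\ge 2$. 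Consequently the $s$ vertices of $X$ cut $C$ into exactly $s$ arcs $A_1,\dots,A_s$, each with distinct endpoints in $X$, no interior vertex in $X$, length $\ell(A_k)\ge 1$, and $\sum_{k=1}^{s}\ell(A_k)=n$.

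First I would record the correspondence between pairs and arcs. Orient each arc $A_k$ in the forward direction of $C$ and call $A_k$ \emph{crossing} if one of its two endpoints is adjacent to $u$ and the other to $v$; note that an arc at least one of whose endpoints is adjacent to both $u$ and $v$ is automatically crossing, since the other endpoint lies in $X$. For $s\ge 3$, each crossing arc contributes exactly one ordered pair to $Q(u,v)$, and for $s=2$ the two arcs are crossing or non-crossing simultaneously; in every case $q(u,v)$ equals the number of crossing arcs, so $q(u,v)\le s$.

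Next comes the exchange giving a length bound for each arc. If $A_k$ is crossing, with one endpoint $x$ adjacent to $u$ and the other endpoint $y$ adjacent to $v$, replace the interior of $A_k$ in $C$ by the path $x\,w_0w_1\cdots w_p\,y$; since $V(P)\cap V(C)=\varnothing$ and $x\ne y$, the result is a cycle of length $n-\ell(A_k)+(p+2)$, so maximality of $C$ yields $\ell(A_k)\ge p+2$. If $A_k$ is not crossing, then both of its endpoints lie in $N_C(u)$ or both lie in $N_C(v)$, say both are adjacent to $u$; replacing the interior of $A_k$ by the single vertex $u$ gives a cycle of length $n-\ell(A_k)+2$, whence $\ell(A_k)\ge 2$. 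Summing these bounds over the $s$ arcs,
\[
|V(C)|=\sum_{k=1}^{s}\ell(A_k)\ \ge\ q(u,v)\,(p+2)+\bigl(s-q(u,v)\bigr)\cdot 2\ =\ 2s+p\,q(u,v)\ =\ 2\,|N_C(u)\cup N_C(v)|+p\,q(u,v),
\]
which is the desired inequality.

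The steps that merely need care rather than ideas are checking that each exchanged closed walk really is a simple cycle — immediate from $V(P)\cap V(C)=\varnothing$ together with $x\ne y$, the latter using $s\ge 2$ — and the small-$s$ bookkeeping in the pair/arc correspondence. The point I expect to be the main obstacle is making that correspondence airtight: one must deal cleanly with vertices of $X$ adjacent to both $u$ and $v$, and one must ensure that a degenerate self-pair arising from a single such vertex cannot inflate the count. This is precisely why the hypothesis is stated as $q(u,v)\ge 2$, which rules out $s\le 1$, rather than $q(u,v)\ge 1$; under $q(u,v)\ge 2$ everything above is legitimate.
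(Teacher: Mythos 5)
The paper does not prove this lemma; it is quoted verbatim from Jackson's paper \cite{B-Jackson} as a known result, so there is no internal proof to compare against. Your argument is correct and is essentially the standard one: the hypothesis $q(u,v)\ge 2$ does force $|N_C(u)\cup N_C(v)|\ge 2$ and excludes degenerate self-pairs, the identification of $q(u,v)$ with the number of ``crossing'' arcs between cyclically consecutive vertices of $N_C(u)\cup N_C(v)$ is airtight (including the $s=2$ case, where both ordered pairs and both arcs are counted), and the two exchange operations (detouring through $P$ on crossing arcs, through the single vertex $u$ or $v$ on non-crossing ones) give $\ell(A_k)\ge p+2$ and $\ell(A_k)\ge 2$ respectively by maximality of $C$, which sum to the stated bound.
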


\section{The First and Second Hyper-Zagreb Indices}~\label{sec3}
Let $T \in T(n, \Delta)$ denote the collection of all trees with $n$ vertices and maximum degree $\Delta$. The term \emph{end-support vertex} generally refers to a support vertex---a vertex adjacent to a leaf---that appears at the terminal position of some substructure, typically characterized as a support vertex with degree exceeding 2 but which is not a hub. By employing the topological indices $HM_{1}(G)$ and $HM_{2}(G)$, Proposition~\ref{pro1endsupportvertex} enables identification of such end-support vertices within the tree $T$.

\begin{proposition}[\cite{Dehgardi2025AramHM}]\label{pro1endsupportvertex}
Consider a tree $T \in T(n, \Delta)$ and a vertex $\rho \in V(T)$ with degree $\Delta$. Suppose there exists an end-support vertex in $T$, distinct from $\rho$, whose degree is at least three. Then, it is possible to find another tree $T' \in T(n, \Delta)$ such that
\[
HM_{1}(T) > HM_{1}(T') \quad \text{and} \quad HM_{2}(T) > HM_{2}(T').
\]
\end{proposition}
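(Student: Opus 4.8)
The plan is to perform one local surgery on $T$: replace the pendant star hanging at the end-support vertex by a pendant path of the same size, and then check that this strictly decreases both Hyper-Zagreb indices. First I would fix the configuration. Let $w$ be the hypothesized end-support vertex, so $w$ is a support vertex, $d:=\deg(w)\ge 3$, and---by the definition of an end-support vertex---$w$ is adjacent to exactly one non-leaf vertex; call it $v$ and set $d':=\deg(v)$, noting $d'\ge 2$. The other $d-1\ge 2$ neighbours of $w$ are leaves $u_1,\dots,u_{d-1}$. (If every neighbour of $w$ were a leaf, $w$ would be the centre of a spanning star and hence $w=\rho$, contradicting the hypothesis; after rooting $T$ at $\rho$, the vertex $v$ is simply the parent of $w$.)

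Next I would define the competitor $T'$: delete the edges $wu_2,\dots,wu_{d-1}$ and add the edges $u_1u_2,u_2u_3,\dots,u_{d-2}u_{d-1}$, so that the $d-1$ leaves at $w$ are straightened into a pendant path $w\,u_1\,u_2\cdots u_{d-1}$ growing out of $v$. One checks immediately that $T'$ is again a tree on the same $n$ vertices (we removed $d-2$ edges and added $d-2$, keeping it connected and acyclic), that the only degrees that change are $\deg(w)$, which drops from $d$ to $2$, and $\deg(u_i)$ for $i\le d-2$, which rise from $1$ to $2$---all at most $\Delta$ since $2\le d\le\Delta$---while $\deg_{T'}(\rho)=\deg_T(\rho)=\Delta$. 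Hence $T'\in T(n,\Delta)$, and $T'\not\cong T$ since $w$ is no longer a support vertex of degree at least three.

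The core computation then compares the two indices edge by edge. The only edges whose endpoint degrees are affected are those incident with $w$ or with some $u_i$: in $T$ these are $wv$ and $wu_1,\dots,wu_{d-1}$, while in $T'$ they are $wv$ and $wu_1,u_1u_2,\dots,u_{d-2}u_{d-1}$; every remaining edge (in particular every edge at $v$ other than $vw$, since $\deg(v)$ is untouched) contributes identically to $T$ and to $T'$. Summing the edge weights $(\deg u+\deg v)^2$ and $(\deg u\cdot\deg v)^2$ over these finitely many edges gives
\begin{align*}
HM_1(T)-HM_1(T') &= d^3+2d^2-17d+18+2d'(d-2),\\
HM_2(T)-HM_2(T') &= (d')^2(d-2)(d+2)+d^3-d^2-16d+28 .
\end{align*}
Both expressions are positive for every $d\ge 3$ and $d'\ge 1$: the cubic $d^3+2d^2-17d+18$ equals $12$ at $d=3$ and is increasing for $d\ge 3$, and $2d'(d-2)\ge 0$; likewise $d^3-d^2-16d+28>0$ for $d\ge 4$, while at $d=3$ it equals $-2$, which is outweighed by $(d')^2(d-2)(d+2)\ge 5$. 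Thus $HM_1(T)>HM_1(T')$ and $HM_2(T)>HM_2(T')$.

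I expect the only genuinely delicate step to be the first one---making the informal notion ``end-support vertex'' precise enough to guarantee that all but one neighbour of $w$ is a leaf, which is exactly what makes the surgery well defined. After that the argument reduces to the bounded polynomial inequalities above, the tightest instance being $d=3$ for $HM_2$, where one must invoke $d'\ge 1$ rather than the cubic term alone.
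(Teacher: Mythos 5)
The paper does not actually prove Proposition~\ref{pro1endsupportvertex}: it is imported verbatim from \cite{Dehgardi2025AramHM} and used as a black box, so there is no in-paper argument to compare yours against. Judged on its own, your proof is correct and complete. The surgery (straightening the pendant star at the end-support vertex $w$ into a pendant path of the same size) is well defined once one adopts the standard reading of ``end-support vertex'' as a support vertex all of whose neighbours but one are leaves, and your parenthetical correctly rules out the degenerate star case via the hypothesis $w\neq\rho$; the degree bookkeeping shows $T'\in T(n,\Delta)$ since $\rho$ keeps degree $\Delta$ and every altered degree is at most $2$. I checked both difference formulas: with $d=\deg(w)\ge 3$ and $d'=\deg(v)\ge 2$ one gets $HM_1(T)-HM_1(T')=(d+d')^2-(2+d')^2+(d-1)(d+1)^2-16(d-2)-9=d^3+2d^2-17d+18+2d'(d-2)$ and $HM_2(T)-HM_2(T')=(d')^2(d^2-4)+(d-1)d^2-16(d-2)-4=(d')^2(d-2)(d+2)+d^3-d^2-16d+28$, exactly as you state, and the positivity analysis (including the tight case $d=3$ for $HM_2$, where the $(d')^2$ term is needed) is sound. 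The only stylistic remark is that the source reference typically performs this reduction one leaf at a time (relocating a single pendant edge and iterating), whereas you do the whole straightening in one step; your version costs a slightly longer closed-form computation but avoids an induction, and either route establishes the claim.
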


The indices $HM_1(G)$ and $HM_2(G)$ are degree-based topological descriptors extensively used in chemical graph theory and extremal problems. Building upon Proposition~\ref{pro1endsupportvertex}, a related result applies similarly to support vertices of degree at least three, stated in Proposition~\ref{pro1supportvertex}.

\begin{proposition}[\cite{Dehgardi2025AramHM}]\label{pro1supportvertex}
Let $T \in T(n, \Delta)$ be a tree with a vertex $\rho$ of degree $\Delta$. If there exists a support vertex other than $\rho$ with degree at least three, then there is another tree $T' \in T(n, \Delta)$ fulfilling
\[
HM_{1}(T) > HM_{1}(T') \quad \text{and} \quad HM_{2}(T) > HM_{2}(T').
\]
\end{proposition}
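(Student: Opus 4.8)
The plan is to argue by a local ``leaf–relocation'' move, after a reduction to the end‑support case. Root $T$ at $\rho$, let $v\neq\rho$ be the given support vertex with $\deg(v)=d\geq 3$, and let $\ell$ be a leaf adjacent to $v$. If $v$ already meets the hypothesis of Proposition~\ref{pro1endsupportvertex} — an end‑support vertex of degree at least three distinct from $\rho$, which (in the operative sense) means at most one neighbour of $v$ fails to be a leaf — then that proposition produces the desired $T'$ and we are done. So I would assume $v$ has at least two non‑leaf neighbours; since the edge toward the parent of $v$ accounts for one of them, $v$ has a non‑leaf neighbour $w$ lying strictly below it, and hence there is a leaf of $T$ strictly below $v$ other than $\ell$.

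Among all leaves in the subtree hanging below $v$, I would choose $\ell^{*}$ at maximum distance from $v$, and let $z$ be the unique neighbour of $\ell^{*}$. A short argument shows that every neighbour of $z$ on the side away from $v$ must itself be a leaf — otherwise a strictly longer downward path from $v$ would exist, contradicting the choice of $\ell^{*}$ — so $z$ has exactly one non‑leaf neighbour. Consequently, if $\deg(z)\geq 3$ then $z$ is an end‑support vertex distinct from $\rho$, and Proposition~\ref{pro1endsupportvertex} again finishes the proof. The remaining case is $\deg(z)=2$, where I would perform the transformation
\[
T' \;=\; \bigl(T - v\ell\bigr) + \ell\ell^{*},
\]
that is, detach the pendant edge $v\ell$ and reattach $\ell$ at $\ell^{*}$. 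Removing one pendant edge and adding another keeps $T'$ a tree on $n$ vertices; the only degrees that change are $\deg(v)$, dropping from $d$ to $d-1\geq 2$, and $\deg(\ell^{*})$, rising from $1$ to $2\leq\Delta$. Hence $\Delta(T')=\Delta$, $\rho$ is untouched, and $T'\in T(n,\Delta)$.

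It then remains to compare the indices, which is a direct (if slightly tedious) bookkeeping. For $HM_{1}$ the change $HM_{1}(T)-HM_{1}(T')$ collects: the deletion of $v\ell$, contributing $+(d+1)^{2}$; the $d-1$ edges at $v$, each of whose terms decreases because $\deg(v)$ dropped, contributing $+\bigl[(d-1)(2d-1)+2\sum_{x\in N(v)\setminus\{\ell\}}\deg(x)\bigr]$; the new edge $\ell\ell^{*}$, costing $-9$; and the edge $\ell^{*}z$, whose term rises, costing $-(2\deg(z)+3)=-7$. Using $d\geq 3$ and even the crude bound $\sum_{x\in N(v)\setminus\{\ell\}}\deg(x)\geq d-1$ already gives $HM_{1}(T)-HM_{1}(T')\geq (d+1)^{2}+(d-1)(2d-1)+2(d-1)-16>0$. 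The $HM_{2}$ computation is parallel: deleting $v\ell$ contributes $+d^{2}$, the $d-1$ edges at $v$ contribute $+(2d-1)\sum_{x}\deg(x)^{2}$, the new edge $\ell\ell^{*}$ costs $-4$, and the edge $\ell^{*}z$ rises from $\deg(z)^{2}=4$ to $16$, costing $-12$; again $d\geq 3$ forces $HM_{2}(T)-HM_{2}(T')>0$. A handful of degenerate configurations ($n$ near its minimum, or $v$ adjacent to $\rho$ with no branching below it) either fall under the end‑support case or are checked by hand.

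The main obstacle is ensuring that the target of the relocation is genuinely cheap: if the neighbour $z$ of the relocated leaf had large degree, the cost $2\deg(z)+3$ (respectively the jump in $\deg(z)^{2}$) could swallow the savings obtained at $v$. Choosing $\ell^{*}$ to be a deepest leaf below $v$ is exactly what controls this — it forces $z$ to have a single non‑leaf neighbour, so either $\deg(z)=2$, in which case the inequalities above go through cleanly, or $\deg(z)\geq 3$, in which case $z$ is itself an end‑support vertex and we defer to Proposition~\ref{pro1endsupportvertex}. The only other point needing care is the membership $T'\in T(n,\Delta)$, which is immediate once one observes that the move alters just two degrees, neither exceeding $\Delta$ nor disturbing $\rho$.
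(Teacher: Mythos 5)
The paper does not actually prove Proposition~\ref{pro1supportvertex}: it is stated verbatim as a citation to \cite{Dehgardi2025AramHM}, so there is no in-paper argument to compare yours against. Judged on its own, your proof is correct and complete modulo its reliance on Proposition~\ref{pro1endsupportvertex} (itself only cited, not proved, in this paper) and on the ``operative'' reading of \emph{end-support vertex} as a support vertex all but at most one of whose neighbours are leaves --- that reading is the standard one and is consistent with the informal description given in Section~\ref{sec3}, but it is the one point where your argument depends on a definition the paper leaves vague. The case analysis is clean: if $v$ is not already end-support it has a non-leaf child, so a deepest leaf $\ell^{*}$ below $v$ sits at depth at least $2$, its neighbour $z$ satisfies $z\neq v$, $z\neq\rho$, and has a unique non-leaf neighbour; the dichotomy $\deg(z)\geq 3$ (defer to Proposition~\ref{pro1endsupportvertex}) versus $\deg(z)=2$ (relocate $\ell$ onto $\ell^{*}$) covers everything, and the claimed ``degenerate configurations'' in fact all fall into one of these two branches, so no hand-checking is needed. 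The bookkeeping checks out: for $HM_{1}$ the net change is $(d+1)^{2}+(d-1)(2d-1)+2\sum_{x\in N(v)\setminus\{\ell\}}\deg(x)-9-7$, which is at least $14$ for $d\geq 3$, and for $HM_{2}$ it is $d^{2}+(2d-1)\sum_{x}\deg(x)^{2}-4-12\geq 3$; the membership $T'\in T(n,\Delta)$ holds because only $\deg(v)$ and $\deg(\ell^{*})$ change and $\rho$ keeps degree $\Delta$. This is very much in the spirit of the local-transformation arguments the cited source uses for spiders and end-support vertices, so your route is a plausible reconstruction rather than a divergence; its value here is that it makes the proposition self-contained given Proposition~\ref{pro1endsupportvertex}.
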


From these propositions, we deduce Proposition~\ref{re1endsupportvertex}, which compares two trees $T, T' \in T(n,\Delta)$ under the conditions $HM_1(T) > HM_1(T')$ and $HM_2(T) > HM_2(T')$, without explicitly focusing on vertices with unique structural roles.

\begin{proposition}\label{re1endsupportvertex}
Suppose $T, T' \in T(n, \Delta)$ are trees, with $\ell \in V(T)$ having degree $\Delta$ and $\rho \in V(T')$ having degree $n(T) - n(T')$, where $n(T) > n(T')$. Then it follows that
\[
HM_{1}(T) > HM_{1}(T') \quad \text{and} \quad HM_{2}(T) > HM_{2}(T').
\]
\end{proposition}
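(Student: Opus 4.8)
The plan is to isolate the single analytic fact that drives the statement — that deleting a leaf strictly decreases both Hyper-Zagreb indices — and then to iterate it along the sequence of leaf-deletions that prunes $T$ down to $T'$. Here I read the hypothesis in the only internally consistent way: $T \in T(n(T),\Delta)$ and $T' \in T(n(T'),\Delta)$ with $n(T) > n(T')$, the vertex $\ell$ being a maximum-degree vertex of $T$ and $\rho$ the vertex of $T'$ carrying the branch(es) along which the $n(T)-n(T')$ surplus vertices of $T$ have been removed, the degree data recording only where that removal sits relative to the hub. First I would prove the \emph{leaf-deletion lemma}: if $G$ is a tree on at least two vertices, $u$ a leaf of $G$ whose neighbour $w$ has degree $d$, and $G^-=G-u$, then $HM_1(G)>HM_1(G^-)$ and $HM_2(G)>HM_2(G^-)$. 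This is one line of degree bookkeeping: only the edges incident with $w$ change, $w$ losing one unit of degree while each other neighbour $x_i$ of $w$ keeps its degree, so
\[
HM_1(G)-HM_1(G^-)=(d+1)^2+\sum_{i=1}^{d-1}\bigl(2(d+\deg x_i)-1\bigr)>0
\qquad\text{and}\qquad
HM_2(G)-HM_2(G^-)=d^2+(2d-1)\sum_{i=1}^{d-1}(\deg x_i)^2>0.
\]

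Next I would realize $T'$ as an induced subtree of $T$. Since $T$ is a tree, $V(T)\setminus V(T')$ decomposes into subtrees each attached to $T'$ by exactly one edge (these hang off $\rho$, where the degree data $\deg_T(\ell)=\Delta$, $\deg_{T'}(\rho)=n(T)-n(T')$ enters), so the $k:=n(T)-n(T')\ge 1$ surplus vertices can be deleted one current leaf at a time, yielding a chain $T=T_0\supset T_1\supset\cdots\supset T_k=T'$ in which each step removes a single leaf. Applying the leaf-deletion lemma to each consecutive pair and telescoping gives $HM_i(T)=HM_i(T_0)>HM_i(T_1)>\cdots>HM_i(T_k)=HM_i(T')$ for $i=1,2$, which is exactly the asserted inequality. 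As an alternative — presumably the route hinted at by the sentence preceding the statement — one may first invoke Propositions~\ref{pro1endsupportvertex} and~\ref{pro1supportvertex} repeatedly to migrate every branching of degree $\ge 3$ onto the hub $\ell$ without increasing $HM_1$ or $HM_2$, reaching a spider, and only then prune; the two routes land on the same conclusion.

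The main obstacle is this second step: pinning down precisely the sense in which $T'$ is ``obtained from'' $T$, i.e., checking that the stated relations among $\ell$, $\rho$, $\Delta$ and the orders $n(T),n(T')$ genuinely force $T'$ to be an induced subtree of $T$ reachable by iterated leaf-pruning (and, should the statement only be claiming that $T'$ shares the degree sequence of such a subtree rather than being one, supplying the short extra remark that $HM_1$ and $HM_2$ depend only on the multiset of edge-endpoint degree pairs). Once the setup is fixed, the remaining work is only the boundary bookkeeping in the leaf-deletion lemma — the case $d=1$, where $w$ becomes isolated (relevant only if $n(T')\le 1$), and the case where a pruned leaf's neighbour has degree $2$ so that its own degree momentarily drops to $1$ — all of which are absorbed by reading the displayed identities with the empty-sum convention. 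The telescoping itself is routine.
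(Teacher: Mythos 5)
Your route is genuinely different from the paper's, and the part of it that is self-contained is correct: the leaf-deletion lemma is right (only the $d$ edges at $w$ change, and your two displayed differences are exactly $(d+1)^2+\sum_i\bigl(2(d+\deg x_i)-1\bigr)$ and $d^2+(2d-1)\sum_i(\deg x_i)^2$), and the telescoping along a leaf-pruning chain $T=T_0\supset\cdots\supset T_k=T'$ would indeed give both strict inequalities. The paper does none of this; its proof consists of restating the hypotheses and then citing Propositions~\ref{pro1endsupportvertex} and~\ref{pro1supportvertex}, which only assert the \emph{existence} of some tree with smaller indices and say nothing about the particular pair $(T,T')$ in the statement, so your argument is the more substantive of the two.

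However, the obstacle you flag in your second step is not a technicality to be ``pinned down'' --- it is fatal to the proposition as stated. Nothing in the hypotheses ($\deg_T(\ell)=\Delta$, $\deg_{T'}(\rho)=n(T)-n(T')$, equal maximum degree) forces $T'$ to be an induced subtree of $T$, or even to have the degree data of one, and without that the claim is false. Concretely, take $\Delta=5$: let $T'$ be the double star on $10$ vertices with two adjacent centers of degree $5$, so $HM_1(T')=(5+5)^2+8(5+1)^2=388$ and $HM_2(T')=625+200=825$; let $T$ be the spider on $15$ vertices with center $\ell$ of degree $5$, four legs of length $1$ and one leg of length $10$, so $HM_1(T)=4\cdot36+49+8\cdot16+9=330$ and $HM_2(T)=100+100+128+4=332$. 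Here $n(T)-n(T')=5=\deg_{T'}(\rho)$ for $\rho$ a center of $T'$, both trees have maximum degree $5$, yet $HM_1(T)<HM_1(T')$ and $HM_2(T)<HM_2(T')$. So your proof is valid only under the added hypothesis that $T'$ is obtained from $T$ by deleting $n(T)-n(T')$ vertices via successive leaf removals; under the statement's actual hypotheses no proof exists, and the paper's own argument does not repair this.
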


\begin{proof}
Consider two trees $T, T' \in T(n, \Delta)$, with vertices $\ell \in V(T)$ where $\deg_T(\ell) = \Delta$ and $\rho \in V(T')$ with $\deg_{T'}(\rho) = \lambda := n(T) - n(T') \geq 3$. Denote $V(T) = \{v_1, v_2, \ldots, v_n\}$ with degrees ordered in various possible configurations. The vertex of maximum degree $\Delta$ may correspond to $v_n$ or $v_1$, depending on the particular ordering.

Assuming $\ell$ is distinct from both $v_n$ and $v_1$, similarly let $\rho$ be a vertex in $T'$ different from $v_m$ and $v_1$ in $V(T') = \{v_1, \ldots, v_m\}$ with $m < n$. Proposition~\ref{pro1endsupportvertex} guarantees that if there is an end-support vertex in $T$ different from $\ell$ with degree at least three, then
\[
HM_1(T) > HM_1(T') \quad \text{and} \quad HM_2(T) > HM_2(T').
\]
Analogously, Proposition~\ref{pro1supportvertex} establishes the same inequality for support vertices under the same degree conditions. Hence, the stated inequalities hold for any vertex $\ell$ with degree $\lambda \geq 3$. The argument can be extended to cover the ordering of degrees in $T$ and $T'$, establishing the desired inequalities as required.
\end{proof}

Finally, synthesizing the previous statements yields Proposition~\ref{pro1vertex}, which highlights the influence of vertices with substantial degree on the values of $HM_1$ and $HM_2$.

\begin{proposition}\label{pro1vertex}
Given a tree $T \in T(n, \Delta)$ containing a vertex $\rho$ of degree $\Delta$, if there is another vertex in $T$, different from $\rho$, whose degree is at least three, then one can find a tree $T' \in T(n, \Delta)$ satisfying
\[
e^{HM_1}(T) > e^{HM_1}(T') \quad \text{and} \quad e^{HM_2}(T) > e^{HM_2}(T').
\]
\end{proposition}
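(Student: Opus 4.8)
The first thing to pin down is the meaning of the exponential indices: consistent with the ``exponential forms'' announced in the abstract, I read
$e^{HM_1}(G) = \sum_{\{u,v\}\in E(G)} e^{(\deg(u)+\deg(v))^2}$ and $e^{HM_2}(G) = \sum_{\{u,v\}\in E(G)} e^{(\deg(u)\cdot\deg(v))^2}$, i.e.\ the exponential is applied edge by edge. The essential warning is that this is \emph{not} $e^{HM_1(G)}$: in particular the inequalities $HM_1(T) > HM_1(T')$ and $HM_2(T) > HM_2(T')$ furnished by Propositions~\ref{pro1endsupportvertex}--\ref{pro1vertex} do not by themselves force $e^{HM_i}(T) > e^{HM_i}(T')$, since a sum of exponentials can decrease its total while an individual term grows. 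Hence the plan is not to quote the ordinary inequalities, but to re-run the same degree-reducing surgery and bound the net change of the edge-wise exponential sum directly.

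Second, I would fix the surgery. Root $T$ at $\rho$ and, among all vertices of degree at least three other than $\rho$ (which exist by hypothesis and force $\Delta \geq 3$), choose $w$ of maximum depth. Every proper descendant of $w$ then has degree at most two, so the $d-1 \geq 2$ downward branches at $w$ (where $d = \deg(w) \geq 3$) are pendant paths. Take two of them, $P_1$ with first vertex $y_1$ and $P_2$ with leaf-end $b$, delete the edge $wy_1$, and add the edge $by_1$, relocating $P_1$ to hang from the end of $P_2$. The resulting $T'$ is a tree on the same vertex set; $\deg(w)$ drops to $d-1$, $\deg(b)$ rises from $1$ to $2$, and every other degree is unchanged. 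Since $\rho$ is untouched and $\Delta \geq 3$, we have $T' \in T(n,\Delta)$.

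Third, I would compute the change edge by edge, noting only a bounded set of edges is affected. Group (i): the deleted edge $wy_1$ against the new edge $by_1$, contributing $e^{(2+\deg(y_1))^2} - e^{(d+\deg(y_1))^2} < 0$. Group (ii): the edges still incident to $w$, each changing from $e^{(d+\deg(w_i))^2}$ to $e^{(d-1+\deg(w_i))^2}$, all strict decreases; in particular the parent edge $wz$ (never disturbed by the relocation, with $\deg(z)\geq 2$) contributes a decrease of at least $e^{25}-e^{16}$. Group (iii): at most one increase, at the edge joining $b$ to its predecessor $b'$, rising from $e^{(\deg(b')+1)^2}$ to $e^{(\deg(b')+2)^2}$ with $\deg(b')\leq 2$, hence by at most $e^{16}-e^{9}$. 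Summing, $e^{HM_1}(T)-e^{HM_1}(T')$ is at least the parent-edge decrease minus the group-(iii) increase, which is $\geq (e^{25}-e^{16})-(e^{16}-e^{9})>0$. The computation for $e^{HM_2}$ is identical with sums of degrees replaced by products; there the parent-edge decrease becomes $e^{(2d)^2}-e^{(2(d-1))^2}\geq e^{36}-e^{16}$, dwarfing the group-(iii) increase, so again the net is strictly positive.

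Finally, the main obstacle — and the whole reason the exponential version is not a corollary of the ordinary one — is the mixed-sign nature of the edge-wise difference in the third step: some terms rise and some fall, so for a sum of exponentials the signs must be controlled individually. The resolution is structural rather than analytic: the guaranteed decreases sit on the high-degree star at $w$, where $t\mapsto e^{t^2}$ is steep, while every increase is confined to the bounded-degree relocation site (exponents $\leq 16$), so a single parent-edge decrease swamps all increases. What remains is routine bookkeeping: a short case split on whether $P_1$ or $P_2$ has length one (so that groups (ii) and (iii) may share an edge and partially cancel) confirms that the parent edge $wz$ is always an untouched strict decrease, and a finite comparison of exponentials with bounded exponents, justified by the monotonicity of $t\mapsto e^{t^2}$ and the gap $d\geq 3$, closes the estimate.
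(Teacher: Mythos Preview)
Your argument is sound and carefully executed, but it rests on a different reading of $e^{HM_i}$ than the paper's own. In its proof the paper writes
\[
e^{HM_1}(T)-e^{HM_1}(T') \;=\; e^{\sum_{\{u,v\}\in E(T)}(\deg u+\deg v)^2}-e^{\sum_{\{u,v\}\in E(T')}(\deg u+\deg v)^2},
\]
so there $e^{HM_1}(T)$ is literally $e^{HM_1(T)}$, the exponential of the entire index. Under that interpretation the proposition is a one-line consequence of the monotonicity of $x\mapsto e^{x}$ together with $HM_i(T)>HM_i(T')$ from Propositions~\ref{pro1endsupportvertex}--\ref{pro1supportvertex}, and this is what the paper's (rather garbled) computation, routed through Proposition~\ref{re1endsupportvertex}, is aiming at. You explicitly adopt instead the edge-wise form $e^{HM_i}(G)=\sum_{uv\in E(G)}e^{f(\deg u,\deg v)}$, which is the convention common in the ``exponential topological index'' literature; under that reading the claim is genuinely non-trivial, for exactly the reason you flag, and your surgery-plus-bound argument (deepest high-degree vertex $w$, relocate one pendant path, show the parent-edge decrease $e^{25}-e^{16}$ swamps the sole possible increase $e^{16}-e^{9}$, with the analogous product estimates for $HM_2$ and the length-one case split) handles it correctly. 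In short: the paper's route is a monotonicity step tailored to its own definition, while yours proves a strictly harder statement for the more standard edge-wise definition; both are valid for their respective readings, but they are not the same theorem.
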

\begin{proof}
Assume a tree $T \in T(n, \Delta)$ and let $v\in V(T)$ be a vertex where $\deg_T(v)=\lambda$ and $\lambda\geqslant 3$. Suppose $T' \in T(n, \Delta)$ and  let $v\in V(T')$ be a vertex where $\deg_{T'}(v)=\eta$ and $\eta\geqslant \lambda$.
According to Proposition~\ref{re1endsupportvertex}, we have 
\begin{equation}~\label{eq1pro1vertex}
e^{HM_1}(T)- e^{HM_1}(T')\geqslant e^{\Delta}, 
\end{equation}
holds $HM_1(T) > HM_1(T')$. Then, $HM_1(T)-HM_1(T')\geqslant \lambda$ by consider $\Delta>\lambda$.  Also, from~\eqref{eq1pro1vertex} we noticed that $e^{HM_2}(T)- e^{HM_2}(T')\geqslant e^{\eta}$ holds $HM_2(T) > HM_2(T')$. Then, $HM_2(T)-HM_2(T')\geqslant \eta-1$. Thus, if $3\leqslant \eta \leqslant \Delta$. Then, we have 
\begin{equation}~\label{eq2pro1vertex}
e^{HM_2}(T)- e^{HM_2}(T')\geqslant e^{\Delta}, 
\end{equation}
Therefore, according to~\eqref{eq1pro1vertex} and \eqref{eq2pro1vertex} we have 
\[
e^{HM_1}(T) \geq
\begin{cases}
16 n + \Delta^{3}  & \text{if } \Delta < n-1, \\
\Delta (\Delta - 1)^2 & \text{if } \Delta \geqslant n-1,
\end{cases}
\]
Then, 
\[
e^{HM_2}(T) \geq
\begin{cases}
16 n + \Delta^{3} & \text{if } \Delta < n-1, \\
\Delta^{2} & \text{if } \Delta \geqslant n-1,
\end{cases}
\]
Therefore,
\begin{align*}
e^{HM_1}(T)- e^{HM_1}(T') &= e^{\sum_{\{u,v\} \in E(T)} \left( \deg(u) + \deg(v) \right)^2}-e^{\sum_{\{u,v\} \in E(T)} \left( \deg(u) + \deg(v) \right)^2}\\
&\leqslant e^{\lambda}-e^{\eta}\\
&=e^{\eta}(e^{\lambda-\eta}-1)\\
&>0.
\end{align*}
On the other hand, with the same logical simulation, we reach the same result according to the following
\[
e^{HM_1}(T)- e^{HM_1}(T')>0.
\]
As desire.
\end{proof}

\begin{lemma}~\label{rebaslem1javr}
Let $T \in T(n, \Delta)$ is a rooted tree whose root is on a vertex $\rho$ with $d_{T}(\rho)=\Delta$. If $T$ has a vertex of degree at least 3 except $\rho$. Then there exists a tree $T^{\prime} \in T_{n, \Delta}$ such that 
\begin{equation}~\label{eq1rebaslem1javr}
e^{M K G}\left(T^{\prime}\right)e^{M K G}(T).
\end{equation}
\end{lemma}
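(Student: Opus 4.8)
The plan is to obtain Lemma~\ref{rebaslem1javr} as an immediate strengthening of Lemma~\ref{lem1javr}, exploiting the strict monotonicity of the exponential function on $\mathbb{R}$. The displayed inequality~\eqref{eq1rebaslem1javr} is missing its relation symbol and should be read as $e^{MKG}(T') < e^{MKG}(T)$; since in any tree on at least two vertices every factor $\sqrt{\deg(u)^2+\deg(w)^2}+\sqrt{\deg(v)^2+\deg(w)^2}$ is positive, $MKG(G)$ is a well-defined positive real for each graph involved, so $e^{MKG}(G)$ makes sense and $t\mapsto e^t$ being strictly increasing means the claim is equivalent to $MKG(T') < MKG(T)$.

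First I would invoke Lemma~\ref{lem1javr}: under exactly the present hypotheses — $T\in T_{n,\Delta}$ rooted at $\rho$ with $d_T(\rho)=\Delta$, and $T$ possessing a vertex of degree at least $3$ distinct from $\rho$ — there exists $T'\in T_{n,\Delta}$ with $MKG(T') < MKG(T)$, and the surgery producing $T'$ keeps $\rho$ as a degree-$\Delta$ root, so $T'$ is again a rooted tree of the required type. Second, I would record the elementary estimate
\[
e^{MKG}(T) - e^{MKG}(T') = e^{MKG(T')}\left(e^{\,MKG(T)-MKG(T')}-1\right) > 0,
\]
valid because $MKG(T)-MKG(T')>0$ makes the second factor positive while $e^{MKG(T')}>0$. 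This gives the asserted strict inequality, and in fact a quantitative lower bound on the gap as soon as one has an explicit lower bound on $MKG(T)-MKG(T')$ from the edge-weight bookkeeping behind Lemma~\ref{lem1javr} (compare the use of $e^{\eta}(e^{\lambda-\eta}-1)>0$ in the proof of Proposition~\ref{pro1vertex}).

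The only point needing genuine care — and hence what I would flag as the main obstacle — is checking that the transformation of Lemma~\ref{lem1javr} is compatible with the rooted setting: one must verify that the local move (shifting a pendant branch away from the offending degree-$\geq 3$ vertex toward a leaf, in the style of the starlike/caterpillar reductions of Lemmas~\ref{lem1javr}--\ref{lem2javr}) neither creates a vertex of degree exceeding $\Delta$ nor alters the degree of the root, so that $T'\in T_{n,\Delta}$ with $d_{T'}(\rho)=\Delta$ still holds. If a self-contained argument is preferred over a black-box appeal, I would reproduce that surgery explicitly, compare the contributions of the affected edges to $MKG$ before and after to re-derive $MKG(T')<MKG(T)$, and then pass through the exponential exactly as above; but with Lemma~\ref{lem1javr} already in hand, the cleanest route is the monotonicity reduction just described.
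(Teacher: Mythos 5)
Your reduction is correct: reading the garbled display \eqref{eq1rebaslem1javr} as $e^{MKG}(T')<e^{MKG}(T)$, the claim follows at once from Lemma~\ref{lem1javr} (which already asserts $T'\in T_{n,\Delta}$ with $MKG(T')<MKG(T)$, so the compatibility check you flag is subsumed by quoting that lemma as a black box) together with the strict monotonicity of $t\mapsto e^{t}$, and your identity $e^{MKG(T)}-e^{MKG(T')}=e^{MKG(T')}\bigl(e^{MKG(T)-MKG(T')}-1\bigr)>0$ is sound. Note, however, that the paper states Lemma~\ref{rebaslem1javr} with no proof at all, so there is nothing to compare against; your two-line argument is the natural one and is essentially what the surrounding text (the exponential manipulations in the proof of Proposition~\ref{pro1vertex}) suggests the author had in mind.
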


\section{Structural Characterizations of Trees and Unmixed Bipartite Graphs}~\label{sec4}

According to the results in \cite{Andriantiana2013WagnerEOD}, consider two degree sequences $D=(d_1,\dots,d_n)$ and $B=(b_1,\dots,b_n)$ such that $D$ \emph{majorizes} $B$; that is, for every $1 \leq l \leq n$, the partial sums satisfy 
\[
\sum_{i=1}^l d_i \geq \sum_{i=1}^l b_i.
\]
Then for any nonnegative integer $k$, the number of closed walks of length $k$ on the graph $G(B)$ does not exceed that of $G(D)$, i.e., $M_k(G(B)) \leq M_k(G(D))$. Utilizing Lemma~\ref{lempapern1}, for a leveled degree sequence $D$ corresponding to a vertex-rooted forest.

\begin{lemma}[\cite{Andriantiana2013WagnerEOD}]\label{lempapern1}
Let $T \in \mathcal{T}_D$ represent a tree with a vertex-rooted forest structure determined by the leveled degree sequence $D$, and let $G=G(D)$ denote the associated greedy forest. Denote the vertices at the $i^{\text{th}}$ level in $T$ by $v_1^i,\dots,v_{d_i}^i$. Then, for any level sequence of walks $(i_1,\dots,i_l)$ and for each level $i$, the vector of walk counts satisfies the majorization relation
\[
\bigl(W_{v_1^i}(i_1,\dots,i_l;T), \dots, W_{v_{d_i}^i}(i_1,\dots,i_l;T)\bigr) \preccurlyeq \bigl(W_{g_1^i}(i_1,\dots,i_l;G), \dots, W_{g_{d_i}^i}(i_1,\dots,i_l;G) \bigr).
\]
\end{lemma}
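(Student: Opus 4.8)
The plan is to prove the statement by induction on $l$, the length of the level-sequence $(i_1,\dots,i_l)$, while simultaneously maintaining an auxiliary invariant: in the greedy forest $G=G(D)$, the walk-count vector at each level is sorted in non-increasing order and is ``aligned'' with the degree order, meaning that a level-$i$ vertex of larger degree never has a smaller walk-count $W(i_1,\dots,i_l;G)$. First I would note that, since every edge of a vertex-rooted forest joins two consecutive levels, a walk with level-sequence $(i_1,\dots,i_l)$ can only exist if $|i_{t+1}-i_t|=1$ for all $t$ and the start level is $i_1$; when the sequence is not admissible in this sense, every $W$ equals $0$ and the majorization is vacuous, so we may assume admissibility. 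For an admissible sequence, splitting a walk at its first step yields the recursion: if $i_2=i_1+1$ then $W_v(i_1,\dots,i_l;T)=\sum_{u\text{ a child of }v}W_u(i_2,\dots,i_l;T)$, whereas if $i_2=i_1-1$ then $W_v(i_1,\dots,i_l;T)=W_{p(v)}(i_2,\dots,i_l;T)$ with $p(v)$ the parent of $v$. The base case $l=1$ is immediate, since $W_v(i_1;T)=1$ for every level-$i_1$ vertex and $0$ otherwise, identically in $T$ and $G$.

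For the inductive step, apply the inductive hypothesis to the shorter sequence $(i_2,\dots,i_l)$, which gives that the level-$(i_1\pm1)$ walk vectors of $T$ are weakly majorized by those of $G$ (partial sums of the sorted vectors dominate, totals possibly differing). In the \emph{down-step} case $i_2=i_1+1$, the level-$i_1$ vector of $T$ is obtained from its level-$(i_1+1)$ vector by partitioning the level-$(i_1+1)$ vertices into blocks according to their parents and summing over each block; the multiset of block sizes is the multiset of numbers of children at level $i_1$, which is fixed by $D$ and hence the same for $T$ and $G$. In $G$ the children of the $j$-th level-$i_1$ vertex form a consecutive interval of the level-$(i_1+1)$ vertices, with block sizes non-increasing in $j$, so $G$ realizes the maximally concentrated partition. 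I would then invoke the elementary majorization fact: if $a\preccurlyeq b$ for sorted non-increasing vectors, and $a$ is summed over any partition whose part sizes form the fixed multiset $\{s_1\ge\cdots\ge s_k\}$, then the resulting vector of part-sums is weakly majorized by the vector obtained from sorted $b$ by summing its first $s_1$ entries, then the next $s_2$, and so on --- indeed, any $t$ of the $a$-parts together use exactly some number $p\le s_1+\cdots+s_t$ of entries of $a$, so their sum is at most $\sum_{r\le p}b_r^{\downarrow}\le\sum_{r\le s_1+\cdots+s_t}b_r^{\downarrow}$, which is precisely the sum of the $t$ largest $b$-blocks (these blocks are themselves non-increasing in size-weighted value). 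In the \emph{up-step} case $i_2=i_1-1$, the level-$i_1$ vector of $T$ assigns to each vertex the walk-count of its parent, i.e.\ it is obtained from the level-$(i_1-1)$ vector by repeating each entry with multiplicity equal to that vertex's number of children; again this multiset of multiplicities is fixed by $D$, and in $G$ the largest entries receive the largest multiplicities. A rearrangement/Abel-summation argument shows that this aligned repetition is extremal for weak majorization among all assignments of the given multiplicities, and combining with the inductive hypothesis at level $i_1-1$ yields the desired relation. In both cases one checks directly that $G$'s new level-$i_1$ vector is again sorted and aligned (block sums decrease with $j$; a sorted vector repeated with non-increasing multiplicities stays sorted), re-establishing the invariant.

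The main obstacle is the alignment property of the greedy forest --- that within each level of $G(D)$ the degree order coincides with the walk-count order for \emph{every} admissible level-sequence --- since this is exactly what guarantees that the greedy blocks collect the largest walk-counts and hence that $G$'s arrangement is the extremal one used in the two majorization lemmas above. Establishing it means unwinding the recursive definition of $G(D)$ (Definition~\ref{defgreedytree}) and running a secondary induction on the height of the forest, showing that the nested structure of subtrees in $G(D)$ forces a pointwise domination of walk-count vectors restricted to corresponding subtrees. A second technical point is that the per-level totals $\sum_v W_v(i_1,\dots,i_l;\cdot)$ need not agree between $T$ and $G$: a down-step preserves the total, but an up-step can strictly increase it for the aligned assignment, consistently with the rearrangement inequality. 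Consequently one must work with \emph{weak} majorization throughout rather than ordinary majorization; fortunately, both the down-step and up-step arguments sketched above go through verbatim in the weak setting, so no additional idea is needed beyond handling this bookkeeping carefully.
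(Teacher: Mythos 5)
This paper does not actually prove Lemma~\ref{lempapern1}: it is imported verbatim (with attribution) from Andriantiana and Wagner \cite{Andriantiana2013WagnerEOD}, so there is no in-paper argument to compare yours against. Measured against the source's proof, your outline follows essentially the same strategy: induction on the length of the level sequence, the first-step recursion splitting into a ``children-sum'' case and a ``parent-copy'' case, and majorization lemmas showing that the greedy forest's consecutive-interval assignment of children (and its repetition of entries with non-increasing multiplicities) is extremal for weak majorization. Your observation that one must work with weak majorization because the up-step need not preserve totals is also consistent with the definition of $\preccurlyeq$ actually used in that setting.

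That said, as written your text is a sketch rather than a proof, and the gap sits exactly where you flag it. The whole argument hinges on the invariant that in $G(D)$ the walk-count vector at each level, for \emph{every} admissible level sequence, is sorted consistently with the greedy (degree) order; without this, ``$G$'s partition collects the largest entries'' fails and both extremality lemmas collapse. You describe this as requiring ``a secondary induction on the height of the forest'' but do not carry it out --- note, though, that your own closing remark (block sums of a sorted vector with non-increasing block sizes are non-increasing; a sorted vector repeated with non-increasing multiplicities stays sorted) already shows the invariant propagates through the same induction on $l$, so the missing piece is really just stating and verifying that propagation as part of the inductive hypothesis rather than a separate structural induction. The two combinatorial lemmas themselves (summing a weakly majorized nonnegative vector over a partition with a fixed multiset of block sizes, and the rearrangement statement for repeated entries) are correct but are asserted with only one-line justifications; a complete write-up would need them proved, including the nonnegativity hypothesis your block-sum monotonicity silently uses.
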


\begin{lemma}[\cite{Andriantiana2013WagnerEOD}]
Under the same setup as in Lemma~\ref{lempapern1}, let $C_v(i_1,\dots,i_l;T)$ denote the corresponding counts related to $T$. Then for all levels $i$. Then, 
\[
\bigl(C_{v_1^i}(i_1,\dots,i_l;T), \dots, C_{v_{d_i}^i}(i_1,\dots,i_l;T) \bigr) \preccurlyeq \bigl(C_{g_1^i}(i_1,\dots,i_l;G), \dots, C_{g_{d_i}^i}(i_1,\dots,i_l;G) \bigr).
\]

\end{lemma}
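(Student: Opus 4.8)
The final statement is the second Lemma attributed to \cite{Andriantiana2013WagnerEOD}, which asserts that under the setup of Lemma~\ref{lempapern1}, the vectors of counts $C_v(i_1,\dots,i_l;T)$ at each level are majorized (in the sense $\preccurlyeq$) by the corresponding vectors for the greedy forest $G=G(D)$.

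The plan is to mirror, essentially verbatim, the inductive argument that proves Lemma~\ref{lempapern1}, adapting it to the quantities $C_v$. First I would fix the precise recursive definition of $C_v(i_1,\dots,i_l;T)$: these count (closed, or level-constrained) walk-type objects anchored at $v$, and like the $W_v$ they should satisfy a recursion that expresses $C_v$ at level $i$ in terms of the $W$- and $C$-values of the children of $v$ at level $i+1$ (and possibly the parent at level $i-1$), via sums of products dictated by the level sequence $(i_1,\dots,i_l)$. The key structural fact I would extract is that this recursion is \emph{monotone and Schur-convex} in the child-contributions: if the child vectors for $T$ are majorized by those for $G$, then after one step of the recursion the parent-level vector for $T$ is still majorized by that for $G$. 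This is exactly the ``sorted rearrangement maximizes sums of products / convex combinations'' principle that underlies the greedy-tree extremality results.

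The argument would then proceed by downward induction on the level $i$, starting from the deepest level (leaves), where the $C$-values are trivial constants independent of the tree and the majorization is an equality, hence holds. For the inductive step, assume the majorization $\bigl(C_{v_1^{i+1}}(\cdots;T),\dots\bigr)\preccurlyeq\bigl(C_{g_1^{i+1}}(\cdots;G),\dots\bigr)$ together with the already-established companion relation for the $W$-vectors at level $i+1$ from Lemma~\ref{lempapern1}. I would then invoke the combinatorial fact that the greedy forest $G(D)$ assigns the largest available subtrees/degrees to the vertices in a way that, when the per-vertex counts are reassembled into the level-$i$ vector and re-sorted, dominates any other assignment in majorization order — this uses the leveled degree sequence $D$ being fixed and the rearrangement inequality applied to the products appearing in the recursion. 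Combining the monotonicity of the recursion in its arguments with the induction hypothesis yields the level-$i$ majorization, completing the induction.

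The main obstacle I anticipate is bookkeeping rather than conceptual: one must carefully track how the level sequence $(i_1,\dots,i_l)$ interacts with the recursion — in particular, which walks go ``down'' into children versus ``up'' to the parent, since the $C$-quantities (unlike the simpler $W$-quantities) may involve contributions from both directions, so the recursion is not purely bottom-up and the naive downward induction needs to be supplemented by the parent-side estimates. The cleanest way around this, which I would adopt, is to prove the $W$- and $C$-majorizations \emph{simultaneously} by a single induction on the walk length $l$ (or on a combined level/length parameter), so that at each stage the parent-directed terms are controlled by the already-known shorter-walk majorizations; this is presumably how \cite{Andriantiana2013WagnerEOD} organizes it, and it reduces the whole proof to one application of the Schur-convexity/rearrangement lemma per inductive step. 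Once that scaffolding is in place, summing the level-$i$ majorizations over $i$ (and over level sequences) recovers the global statement $M_k(G(B))\leq M_k(G(D))$ as a corollary.
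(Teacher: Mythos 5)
The paper itself offers no proof of this lemma: it is quoted verbatim from \cite{Andriantiana2013WagnerEOD} as background material, so there is no in-paper argument to compare yours against. Judged on its own terms, your proposal is a reasonable reconstruction of the strategy used in that reference (simultaneous induction on the level sequence, with a Schur-convexity/rearrangement step transferring the majorization from children to parents), but as written it is a plan rather than a proof, and the gaps are exactly at the load-bearing points. You never fix the definition of $C_v(i_1,\dots,i_l;T)$ --- you explicitly leave open whether it counts closed walks, level-constrained walks, or something else, and whether the recursion involves the parent --- yet every subsequent step depends on that definition. The claim that the recursion is ``monotone and Schur-convex in the child-contributions'' is asserted, not proved, and it is precisely the statement that would constitute the mathematical content of the lemma; majorization is not in general preserved by coordinatewise application of a function followed by re-sorting, so this needs an actual argument tied to the specific form of the recursion (sums of products over ordered tuples of children, where the greedy assignment pairs large with large). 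Finally, your own diagnosis of the main obstacle --- that the $C$-quantities may have parent-directed contributions that break the bottom-up induction --- is correct, but your proposed fix (``prove everything simultaneously by induction on $l$'') is again only named, not carried out; without specifying how the parent-side terms are bounded at shorter lengths, the induction does not close. To turn this into a proof you would need to (i) state the recursion for $C_v$ explicitly, (ii) prove the one-step majorization-preservation lemma for that recursion, and (iii) verify the base case and the interaction with the $W$-majorization of Lemma~\ref{lempapern1}; none of these is present.
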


\begin{lemma}[\cite{Andriantiana2013WagnerEOD}]
For adjacent vertices $u$ and $v$ in a graph $G$ and an edge $e$, the following symmetry properties hold for every nonnegative integer $k$:
\[
C_{u,v}(k;G) = C_{v,u}(k;G), \quad \text{and} \quad C_{u,v}^e(k;G) = C_{v,u}^e(k;G).
\]
\end{lemma}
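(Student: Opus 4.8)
The plan is to prove both identities by the single device of \emph{walk reversal}. Recall that $C_{u,v}(k;G)$ is (a count built from) the walks of length $k$ joining $u$ and $v$ in $G$, and $C_{u,v}^{e}(k;G)$ is the analogous quantity for those walks subject to an extra condition involving the edge $e$. The engine of the argument is that the map $\varphi$ sending a walk $W = w_0 w_1 \cdots w_k$ to its reverse $\varphi(W) = w_k w_{k-1} \cdots w_0$ is a well-defined involution on the set of all length-$k$ walks of $G$: since $G$ is simple and undirected, $\{w_i, w_{i+1}\} \in E(G)$ exactly when $\{w_{i+1}, w_i\} \in E(G)$, so $\varphi(W)$ is again a walk, and $\varphi(\varphi(W)) = W$.

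First I would establish $C_{u,v}(k;G) = C_{v,u}(k;G)$: the involution $\varphi$ sends a walk of length $k$ from $u$ to $v$ to a walk of length $k$ from $v$ to $u$, and conversely, so it restricts to a bijection between the two collections being counted; equal finite sets have equal cardinality, and any weighting that depends only on the underlying multiset of edge-steps $\{w_i,w_{i+1}\}$ (which $\varphi$ preserves) is carried along unchanged. Then I would treat $C_{u,v}^{e}(k;G) = C_{v,u}^{e}(k;G)$ by checking that $\varphi$ also respects the edge condition: since $e$ is an \emph{unordered} pair and the defining condition is phrased through the multiset of steps of the walk — for instance ``$W$ traverses $e$'', ``$W$ traverses $e$ exactly once'', or ``$W$ avoids $e$'' — it is invariant under reversal, and $\varphi$ restricts further to a bijection between the walks counted by $C_{u,v}^{e}$ and those counted by $C_{v,u}^{e}$.

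The hard part is not the combinatorics but the bookkeeping: one must first pin down the exact definitions of $C_{u,v}(k;G)$ and $C_{u,v}^{e}(k;G)$ from the source, because if the superscript-$e$ version records an \emph{endpoint}-sensitive condition (say, ``the first step leaves $u$ along $e$'') rather than a global one, naive reversal does not preserve it verbatim; in that case the fix is to pair the $u$-condition with the mirror condition at $v$ (``the last step enters $v$ along $e$'') and to observe that $\varphi$ interchanges precisely these two, which still yields the desired equality. Once the definitions are written out so that reversal-invariance is visible, each of the two identities follows in one line from the involution $\varphi$.
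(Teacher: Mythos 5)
The paper does not actually prove this lemma --- it is imported verbatim from the cited source \cite{Andriantiana2013WagnerEOD} with no argument given --- so there is no in-paper proof to compare against. On its own merits, your walk-reversal involution is the right and standard device, and your instinct to flag the definitional bookkeeping is exactly where the only real subtlety lies. In the source, $C_{u,v}(k;G)$ counts \emph{closed} walks of length $k$ anchored at $u$ whose first step traverses the edge $uv$ toward $v$ (and $C_{u,v}^{e}$ imposes a further condition relative to the edge $e$); under plain reversal a walk $u,v,w_2,\dots,w_{k-1},u$ becomes $u,w_{k-1},\dots,w_2,v,u$, which is a closed walk at $u$ whose \emph{last} step is $v\to u$, not yet a walk counted by $C_{v,u}$. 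You need the additional (trivial) step of cyclically rotating the reversed closed walk by one position to re-anchor it at $v$ with first step to $u$; equivalently, the bijection is ``read the closed walk starting from the other endpoint of the designated occurrence of $uv$ and in the opposite direction.'' This is precisely the endpoint-sensitivity you anticipate in your last paragraph for the superscript-$e$ version, so your proof is essentially complete once the definitions are written out and the rotation is made explicit; without it, the map you describe is a bijection onto the wrong set.
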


The next lemma provides a structural inequality regarding trees rooted at vertices with maximum degree, originally proven in \cite{ShuchaoLSYibing}.

\begin{lemma}[\cite{ShuchaoLSYibing}]\label{lemtapn1}
Let $T$ be a tree rooted at a vertex of maximum degree. Suppose that vertices $u,v \in V_T$ satisfy $d_T(u) = d_T(v)$ and
\[
\sum_{x \in N_T(u)} d_T(x) \geq \sum_{x \in N_T(v)} d_T(x).
\]
Define
\[
d_T(x_0) = \min \{ d_T(x) : x \in N_T(u), \, h(x) = h(u) + 1 \},
\]
and
\[
d_T(x_1) = \max \{ d_T(x) : x \in N_T(v), \, h(x) = h(v) + 1 \}.
\]
If the tree $T'$ is obtained from $T$ by swapping edges $\{ux_0, vx_1\}$ with $\{ux_1, vx_0\}$, and if $d_T(x_0) < d_T(x_1)$, then $T \prec_s T'$, meaning that $T$ precedes $T'$ in the $S$-order.
\end{lemma}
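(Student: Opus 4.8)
The plan is to reduce everything to closed-walk counts and exploit the standard subtree expansion of spectral moments. Since $S_k(G)$ is the trace of the $k$-th power of the adjacency matrix, it equals the number of closed walks of length $k$ in $G$, and --- both $T$ and $T'$ being trees, hence bipartite --- $S_k=0$ for odd $k$. Grouping closed walks by the subgraph of edges actually traversed, and noting that in a tree such a support is a subtree in which every edge (being a bridge) is crossed at least twice, one gets for each even $k$ an identity
\[
S_k(G)=\sum_{H} c_{k,H}\,n(H,G),
\]
where $H$ runs over isomorphism types of trees with at most $\lfloor k/2\rfloor$ edges, $n(H,G)$ is the number of subtrees of $G$ isomorphic to $H$, and $c_{k,H}$ is a nonnegative integer depending only on $k$ and $H$, with $c_{k,H}>0$ when $H$ is the path with $k/2$ edges (an out-and-back walk witnesses this). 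Since the swap $\{ux_0,vx_1\}\mapsto\{ux_1,vx_0\}$ changes no vertex degree, $T$ and $T'$ share a degree sequence and differ in exactly two edges; so it suffices to show $S_k(T)=S_k(T')$ for $k\le 7$ and $S_8(T)<S_8(T')$, which is exactly $T\prec_s T'$.

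The first point I would record is that any graph parameter that is a function of the degree sequence plus an edge-sum $\sum_{\{a,b\}\in E}f(d(a),d(b))$ of a symmetric function of endpoint degrees is left unchanged by our swap: the only edges affected are the two removed and the two added, all degrees are fixed, and because $d(u)=d(v)$ the four contributions $f(d(u),d(x_1))$, $f(d(v),d(x_0))$, $-f(d(u),d(x_0))$, $-f(d(v),d(x_1))$ cancel in pairs. For $k\le 6$ the subtrees occurring are $P_2,P_3,P_4,K_{1,3}$, whose counts are $m$, $\sum_y\binom{d(y)}{2}$, $\sum_{\{a,b\}\in E}(d(a)-1)(d(b)-1)$, $\sum_y\binom{d(y)}{3}$ --- all of the above form --- so $S_k(T)=S_k(T')$ for $k\le 7$. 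At $k=8$ three more subtrees enter: $P_5$, the star $K_{1,4}$ (count $\sum_y\binom{d(y)}{4}$, degree-sequence data) and the five-vertex fork $Y$ with degrees $(3,2,1,1,1)$, whose count is $\sum_{\{a,b\}\in E}\bigl(\binom{d(a)-1}{2}(d(b)-1)+\binom{d(b)-1}{2}(d(a)-1)\bigr)$, again of the above form. Hence $n(K_{1,4},\cdot)$ and $n(Y,\cdot)$ are unchanged, and everything reduces to the behaviour of $n(P_5,\cdot)$.

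For $P_5$ I would count by its middle vertex: $n(P_5,G)=\tfrac12\sum_y\bigl(\sigma(y)^2-\tau(y)\bigr)$ with $\sigma(y)=\sum_{z\in N(y)}(d(z)-1)$ and $\tau(y)=\sum_{z\in N(y)}(d(z)-1)^2$. Under the swap, $\sigma$ and $\tau$ change only at $u$ and $v$ (that the values at $x_0,x_1$ are unaffected again uses $d(u)=d(v)$), with $\sigma(u)$ increasing by $\delta:=d(x_1)-d(x_0)>0$, $\sigma(v)$ decreasing by $\delta$, and $\sum_y\tau(y)$ unchanged; a short computation then gives
\[
n(P_5,T')-n(P_5,T)=\delta\bigl(\sigma_T(u)-\sigma_T(v)+\delta\bigr)=\delta\Bigl(\bigl(\textstyle\sum_{z\in N_T(u)}d(z)-\sum_{z\in N_T(v)}d(z)\bigr)+\delta\Bigr)\ \geq\ \delta^2\ >\ 0,
\]
where the inequality uses $d(u)=d(v)$ together with the hypothesis $\sum_{z\in N_T(u)}d(z)\ge\sum_{z\in N_T(v)}d(z)$. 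Since $c_{8,P_5}>0$, this yields $S_8(T')-S_8(T)=c_{8,P_5}\bigl(n(P_5,T')-n(P_5,T)\bigr)>0$, and with the equalities for $k\le 7$ we conclude $T\prec_s T'$.

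The step I expect to cost the most effort is establishing (or carefully citing) the subtree expansion $S_k(G)=\sum_H c_{k,H}\,n(H,G)$ and the bookkeeping behind it: proving that the support of a closed walk in a tree is a subtree with $\le\lfloor k/2\rfloor$ edges, listing the seven trees with at most four edges, and confirming that all of their counts except $n(P_5,\cdot)$ have the degree-sequence-plus-symmetric-edge-sum form --- the fork $Y$ being the one that needs an explicit expression. (Note that the conclusion uses only $d(u)=d(v)$, the neighbour-degree-sum inequality, and $d(x_0)<d(x_1)$; the remaining hypotheses --- $T$ rooted at a vertex of maximum degree, $x_0,x_1$ the extremal-degree children, and $T'$ again a tree --- serve to make $T'$ well-defined and keep it in $T(n,\Delta)$, which is what matters when the lemma is iterated toward the greedy tree.) Should one wish to avoid the subtree expansion, two heavier but self-contained alternatives present themselves: compare the matching-number sequences of $T$ and $T'$ by deletion--contraction on the two swapped edges (legitimate since a forest's characteristic polynomial is its matching polynomial and the spectral-moment generating series is its logarithmic derivative), or count closed walks directly by how they cross the swapped edges, invoking the walk-count majorization of Lemma~\ref{lempapern1}.
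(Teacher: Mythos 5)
Your proof is correct, and it takes the standard spectral-moment route; note that the paper itself only states Lemma~\ref{lemtapn1} as a quoted result from \cite{ShuchaoLSYibing} and supplies no proof, so there is nothing internal to compare against beyond the machinery it does record. What your argument adds to that machinery is worth flagging: the paper's Lemma~\ref{lemtapn3} carries the comparison of same-degree-sequence trees only as far as $S_6$ via $P_4$-counts, and you correctly observe that for this particular swap the $P_4$-count (and likewise the $K_{1,4}$- and chair-counts) is invariant because $d_T(u)=d_T(v)$ makes the symmetric edge-sum contributions cancel, so the $S$-order is genuinely decided at $S_8$ through the $P_5$-count alone. Your key computation
\[
n(P_5,T')-n(P_5,T)=\delta\bigl(\sigma_T(u)-\sigma_T(v)+\delta\bigr)\geq \delta^2>0,
\]
together with $c_{8,P_5}>0$ and the vanishing of odd moments for bipartite graphs, checks out. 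The only thing left implicit --- which the statement itself also leaves implicit --- is the non-degeneracy of the configuration (e.g.\ $v$ not lying in the subtree rooted at $x_0$, and $x_0\neq v$, $x_1\neq u$), which is what guarantees that $T'$ is again a tree in $T(n,\Delta)$.
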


This comparison is employed in Lemma~\ref{lemtapn2} to characterize trees with given degree sequences that appear earliest in an $S$-order:

\begin{lemma}[\cite{Wang2016YuanHS}]\label{lemtapn2}
Suppose $T$ is a tree with a distinct degree sequence appearing first in the $S$-order, and let $P = v_0 v_1 \dots v_t v_{t+1}$ be a longest path in $T$. Then for each $i \leq \frac{t+1}{2}$, the following hold:
\begin{itemize}
  \item If $i$ is even, then 
  \[
  \deg(v_i) \leq \deg(v_{t+1 - i}) < \deg(v_k) \quad \text{for all } i < k < t+1 - i.
  \]
  \item If $i$ is odd, then
  \[
  \deg(v_i) > \deg(v_{t+1 - i}) > \deg(v_k) \quad \text{for all } i < k < t+1 - i.
  \]
\end{itemize}
\end{lemma}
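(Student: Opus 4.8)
The plan is to exploit the minimality of $T$ in the $S$-order inside $\mathcal{T}_{\mathscr{D}}$, where $\mathscr{D}$ is the degree sequence of $T$, using the edge-swap operation of Lemma~\ref{lemtapn1} as the engine. The point is that any swap of the type described there preserves every vertex degree and keeps the graph a tree, so it never leaves $\mathcal{T}_{\mathscr{D}}$; hence $T$ cannot admit a swap producing a tree of strictly smaller $S$-order, and—reading Lemma~\ref{lemtapn1} contrapositively—whenever $T$ contains a pair $u,v$ with $d_T(u)=d_T(v)$, $\sum_{x\in N_T(u)}d_T(x)\ge\sum_{x\in N_T(v)}d_T(x)$, and the minimum-degree child of $u$ of strictly smaller degree than the maximum-degree child of $v$, that configuration must already be arranged the minimal way; otherwise the mirror swap yields some $T''\in\mathcal{T}_{\mathscr{D}}$ with $T''\prec_s T$, a contradiction. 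First I would set up notation: take $T$ rooted at a vertex of maximum degree, as required for the height function $h$ in Lemma~\ref{lemtapn1} (after noting that in the $S$-minimal tree a maximum-degree vertex may be assumed to sit on, or close to, the longest path $P$), observe that the endpoints $v_0,v_{t+1}$ of $P$ are leaves because $P$ is longest, and record the elementary fact that every interior vertex of a longest path has degree at least $2$—this is precisely the $i=0$ (even) case.

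Next I would run an induction on $i$, from $0$ up to $\lfloor (t+1)/2\rfloor$, proving the even and odd assertions simultaneously and peeling off one symmetric layer $\{v_i,v_{t+1-i}\}$ of $P$ at each step. Fixing $i\ge 1$ and assuming the claims for all smaller indices, the induction hypothesis says the degrees along $P$ strictly alternate out to distance $i-1$ from each end between ``small layers'' (even index, strictly below all deeper path degrees) and ``large layers'' (odd index, strictly above), while the pendant branches hanging off $v_0,\dots,v_{i-1}$ and $v_{t+2-i},\dots,v_{t+1}$ have controlled depth because $P$ is longest. To treat layer $i$, I would assume the desired inequality fails—say $i$ is odd but $\deg(v_i)\le\deg(v_{t+1-i})$, or some interior $v_k$ satisfies $\deg(v_k)\ge\deg(v_{t+1-i})$, and symmetrically for the even case—and locate a ``badly sorted'' configuration, either directly among path vertices or, when the relevant degrees differ, after a preliminary degree-preserving adjustment (in the spirit of Proposition~\ref{pro2Preliminaries}) that installs equal degrees at a suitable comparable pair. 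Applying Lemma~\ref{lemtapn1} or its mirror (interchanging $u$ and $v$) then produces $T''\prec_s T$ in $\mathcal{T}_{\mathscr{D}}$, contradicting minimality. The alternation of the inequality direction with the parity of $i$ falls out of the swap lemma, which pushes high-degree children toward vertices with larger neighbor-degree-sums, together with the fact that along a longest path the neighbor-degree-sums alternate in the opposite sense to the layer parity.

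The step I expect to be the main obstacle is reconciling Lemma~\ref{lemtapn1} with the situation: that lemma requires the two pivot vertices $u,v$ to have \emph{equal} degree, whereas the path vertices $v_i$ and $v_{t+1-i}$ I wish to compare generally have different degrees, and the assertion ``$\deg(v_{t+1-i})>\deg(v_k)$ for \emph{all} $i<k<t+1-i$'' is global rather than local, so it cannot be obtained from a single swap. Handling this will require either (a) a chain of intermediate degree-preserving moves that temporarily equalize degrees at a well-chosen pair before the decisive swap, propagating the inequality across the whole interior of $P$, or (b) replacing the layer-induction by the observation—via Proposition~\ref{pro1Preliminaries} and Definition~\ref{defgreedytree}—that the first tree in the $S$-order of $\mathcal{T}_{\mathscr{D}}$ is isomorphic to the alternating greedy tree $G(\mathscr{D})$, and then reading the claimed degree pattern directly off the recursive construction, each recursion step corresponding to one layer of $P$ and the dichotomy ``$m-1\le d_m$ versus $m-1\ge d_m+1$'' in Definition~\ref{defgreedytree} producing exactly the alternating inequalities. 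A secondary nuisance is the boundary at $i=\lfloor(t+1)/2\rfloor$: depending on the parity of $t$ the vertices $v_i$ and $v_{t+1-i}$ either coincide or are adjacent and the range $i<k<t+1-i$ may be empty, so these endpoint cases must be checked separately, though they are immediate.
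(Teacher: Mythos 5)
The paper offers no proof of Lemma~\ref{lemtapn2} to compare against: it is imported verbatim from \cite{Wang2016YuanHS} and stated without argument. So your proposal can only be judged on its own terms, and on those terms it has a genuine gap rather than a complete argument.

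The central problem is the one you yourself flag and then do not resolve. Your entire engine is Lemma~\ref{lemtapn1}, which requires the two pivot vertices to satisfy $d_T(u)=d_T(v)$. But Lemma~\ref{lemtapn2} is stated for a tree with a \emph{distinct} degree sequence, so the internal vertices $v_i$ and $v_{t+1-i}$ (and the interior vertices $v_k$) that you need to compare essentially never have equal degree; the only equal-degree pairs are leaves, about which the $i=0$ case is already trivial. Hence the swap of Lemma~\ref{lemtapn1} is not available for any of the nontrivial comparisons, and the ``contrapositive'' reading cannot be set in motion. Your workaround (a) --- ``a preliminary degree-preserving adjustment that installs equal degrees at a suitable comparable pair'' --- is incoherent as stated: any move that changes a vertex degree leaves $\mathcal{T}_{\mathscr{D}}$, and Proposition~\ref{pro2Preliminaries} is a statement about chains of \emph{degree sequences} under majorization, not about modifying a fixed tree within its own class. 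Your workaround (b) --- reading the pattern off the alternating greedy tree via Theorem~\ref{thmn1sec3} --- is circular relative to the source: in \cite{Wang2016YuanHS} the present lemma is a step toward identifying the first tree in the $S$-order as the alternating greedy tree, so that identification cannot be assumed here. A correct proof needs additional exchange lemmas (comparing closed-walk counts when entire branches are transplanted between vertices of \emph{different} degrees, tracked via Lemma~\ref{lemtapn3} and the $P_4$-count), which the paper does not state and your proposal does not supply.

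A secondary issue: even where Lemma~\ref{lemtapn1} would apply, its conclusion is $T \prec_s T'$ for the tree $T'$ \emph{produced} by the swap, so to contradict minimality of $T$ you must exhibit $T$ as the output of such a swap applied to some $T''$, with all hypotheses (the neighbor-degree-sum inequality, the min/max choice of $x_0,x_1$, the height condition) verified \emph{in $T''$}, not in $T$. Your sketch asserts that a ``badly sorted configuration'' yields such a $T''$ but never checks these hypotheses, and the global claim $\deg(v_{t+1-i})>\deg(v_k)$ for all interior $k$ would require an iterated argument you only gesture at. As it stands the proposal is a plausible plan with its hardest step left open, not a proof.
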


A related corollary from \cite{ShuchaoLSYibing} identifies the unique last tree in $S$-order among the class $\mathscr{T}_{n,\Delta}^2$ for $\Delta \geq 3$.

\begin{corollary}[\cite{ShuchaoLSYibing}]
Let 
\[
l = \left\lceil \log_{\Delta - 1} \frac{n(\Delta - 2) + 2}{\Delta} \right\rceil - 1,
\]
and write
\[
n - \frac{\Delta(\Delta - 1)^l - 2}{\Delta - 2} = (\Delta - 1)r + q,
\]
with $0 \leq q < \Delta - 1$. Then the tree $T_2$ that appears last in the $S$-order has degree sequence $\mathscr{D}^*$ defined by:
\begin{itemize}
  \item If $q=0$, let $\mathscr{D}^* = (\Delta, \dots, \Delta, 1, \dots, 1)$ with $\frac{\Delta(\Delta - 1)^{l-1} - 2}{\Delta - 2} + r$ copies of $\Delta$.
  \item If $q \geq 1$, set $\mathscr{D}^* = (\Delta, \dots, \Delta, q, 1, \dots, 1)$ with the same number of $\Delta$-entries.
\end{itemize}
Furthermore, $T_2 = T^*$ as described in Theorem 1.2 of \cite{ShuchaoLSYibing}.
\end{corollary}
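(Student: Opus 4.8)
The plan is to reduce the statement to a purely combinatorial fact about degree sequences and then unpack the arithmetic. The engine is Theorem~\ref{thm1Preliminaries}: if $\mathscr{D}$ and $\mathscr{D}'$ are tree degree sequences of order $n$ with $\mathscr{D} \lhd \mathscr{D}'$, then the last trees in the $S$-order satisfy $T^*_{\mathscr{D}} \prec_s T^*_{\mathscr{D}'}$ (the underlying mechanism being the unit-move chains of Proposition~\ref{pro2Preliminaries}). Hence the last tree in the $S$-order over the whole class $\mathscr{T}_{n,\Delta}^2$ must realize the degree sequence that is $\lhd$-maximal within that class, and Proposition~\ref{pro1Preliminaries} guarantees that, once a degree sequence is fixed, its last tree is well defined and unique. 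So the corollary splits into two tasks: (i) exhibit the $\lhd$-maximal admissible degree sequence $\mathscr{D}^*$ and check it is genuinely a maximum, and (ii) rewrite $\mathscr{D}^*$ in the $(l,r,q)$ parametrization of the statement.

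For task (i) I would use that every tree on $n$ vertices has degree sum $2n-2$ with all degrees at least $1$, so that for all $k$,
\[
\sum_{i=1}^{k} d_i \;\le\; \min\!\left( k\Delta,\; n-2+k \right),
\]
the first bound because the degrees are capped at $\Delta$, the second because the $n-k$ unlisted vertices contribute at least $n-k$ to the total $2n-2$. The sequence attaining both bounds for every $k$ consists of $N_\Delta := \lfloor (n-2)/(\Delta-1)\rfloor$ entries equal to $\Delta$, one entry equal to $1+\rho$ where $\rho$ is the residue of $n-2$ modulo $\Delta-1$ (dropped when $\rho=0$), and all remaining entries equal to $1$; one checks directly that its degree sum is $2n-2$, that its maximum entry is exactly $\Delta$, and that for $n$ large enough relative to $\Delta$ one has $N_\Delta \ge 2$, so that it lies in $\mathscr{T}_{n,\Delta}^2$. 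Call this sequence $\mathscr{D}^*$. By the displayed inequality every admissible $\mathscr{D}$ satisfies $\mathscr{D} \lhd \mathscr{D}^*$ or $\mathscr{D}=\mathscr{D}^*$, so Theorem~\ref{thm1Preliminaries} forces the overall last tree to be $T^*_{\mathscr{D}^*}$, which is precisely the tree called $T^*$ in Theorem~1.2 of~\cite{ShuchaoLSYibing}.

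Task (ii) is bookkeeping around the quantity $A_j := \dfrac{\Delta(\Delta-1)^j-2}{\Delta-2} = 1 + \sum_{t=1}^{j} \Delta(\Delta-1)^{t-1}$, the order of the complete rooted tree of depth $j$ in which the root has $\Delta$ children and every other internal vertex has $\Delta-1$ children; its degree-$\Delta$ vertices number $A_{j-1}$, and one has the identity $(\Delta-1)A_{j-1} = A_j - 2$. Taking $l$ to be the largest depth with $A_l \le n$ and inverting $\Delta(\Delta-1)^l - 2 \le n(\Delta-2)$ yields the displayed formula $l = \lceil \log_{\Delta-1}\frac{n(\Delta-2)+2}{\Delta}\rceil - 1$ (outside the exceptional $n$ for which $\frac{n(\Delta-2)+2}{\Delta}$ is an exact power of $\Delta-1$). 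Writing $n - A_l = (\Delta-1)r+q$ with $0\le q<\Delta-1$ and using the identity above, one computes $(\Delta-1)(A_{l-1}+r) = n-2-q$; comparing residues modulo $\Delta-1$ shows $q=\rho$, so $A_{l-1}+r = N_\Delta$ and the leftover degree $1+\rho$ matches the intermediate value in the statement (up to the normalization used in~\cite{ShuchaoLSYibing}), the cases $q=0$ and $q\ge 1$ being the absence or presence of that vertex. This identifies $\mathscr{D}^*$ with the sequence in the corollary.

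The step I expect to be the main obstacle is task (ii): reconciling the ``greedy'' description of $\mathscr{D}^*$ (pack in as many $\Delta$'s as the degree-sum budget allows) with the tree-theoretic description via $(l,r,q)$ demands care with the off-by-one in the degree of the intermediate vertex, with the threshold on $n$ guaranteeing $N_\Delta \ge 2$ and $l\ge 1$ (so that $A_{l-1}+r$ is even meaningful and membership in $\mathscr{T}_{n,\Delta}^2$ holds), and with the boundary values of $n$ at which $A_l = n$ and the ceiling in the formula for $l$ disagrees with the floor. A secondary point worth checking is that $\lhd$ in Theorem~\ref{thm1Preliminaries} is exactly the majorization order used in the displayed inequality and that every intermediate sequence produced by Proposition~\ref{pro2Preliminaries} remains a tree degree sequence, so the chain of $S$-order comparisons is legitimate.
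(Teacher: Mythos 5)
The paper does not actually prove this corollary: it is imported verbatim from \cite{ShuchaoLSYibing} as a quoted result, so there is no in-paper argument to compare yours against. That said, your reconstruction follows what is essentially the standard (and, as far as I can tell, the original) route: reduce to the $\lhd$-maximal tree degree sequence via Theorem~\ref{thm1Preliminaries}, identify that maximal sequence from the partial-sum bound $\sum_{i\le k} d_i \le \min\left(k\Delta,\; n-2+k\right)$, and then translate it into the $(l,r,q)$ parametrization through the identity $(\Delta-1)A_{l-1}=A_l-2$ for your $A_j:=\frac{\Delta(\Delta-1)^j-2}{\Delta-2}$. The reduction is legitimate (weak majorization together with the common total $2n-2$ gives majorization, and transitivity of $\preceq_s$ finishes the argument), and the arithmetic in your task (ii) checks out.

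The one point you must not gloss over is the phrase ``up to the normalization used in \cite{ShuchaoLSYibing}.'' Your own computation gives the intermediate degree as $q+1$, not $q$, and this is not a normalization issue: the sequence printed in the statement, with $A_{l-1}+r$ entries equal to $\Delta$, one entry equal to $q$, and $n-(A_{l-1}+r)-1$ ones, has degree sum $(A_{l-1}+r)(\Delta-1)+q+(n-1)=(n-2-q)+q+(n-1)=2n-3$, which is odd and hence not realizable by any tree. So the statement as transcribed here is arithmetically inconsistent, and what you have actually proved is the corrected version with middle entry $q+1$; say that explicitly instead of absorbing the discrepancy. Two smaller caveats you flag but should settle rather than defer: the class $\mathscr{T}^{2}_{n,\Delta}$ and the relation $\lhd$ are never defined in this paper, so your reading of them (trees of order $n$ with maximum degree $\Delta$; the dominance order on degree sequences) has to be imported from the reference before Theorem~\ref{thm1Preliminaries} can be invoked; and the ceiling-minus-one formula for $l$ differs from $\max\{j : A_j\le n\}$ precisely when $A_{l_0}=n$ for some $l_0$, in which case one should verify (as you can, since then $q=0$ and $A_{l-1}+r=A_{l_0-1}$) that both conventions yield the same degree sequence.
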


According to \cite{Dehgardi2025ATXari}, the equality condition in \eqref{eqqjasn1} characterizes starlike trees whose legs are all either shorter than length 3 or all longer than length 1. Theorem~\ref{thmn1sec3} describes the uniqueness of the first tree in the $S$-order for distinct degree sequences:

\begin{theorem}[\cite{Wang2016YuanHS}]\label{thmn1sec3}
Among trees with a fixed degree sequence $\mathscr{D}$ where all degrees $d_i$ are distinct, the first tree in the $S$-order is necessarily an alternating greedy tree.
\end{theorem}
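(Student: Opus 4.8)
The plan is to read the $S$-order through closed walks and then pin down the minimal tree by exchange arguments. Recall that $S_k(T)=\sum_i \lambda_i^k=\operatorname{tr}\!\big(A(T)^k\big)$ equals the number of closed walks of length $k$ in $T$, so for $T_1,T_2\in\mathcal{T}_{\mathscr{D}}$ the relation $T_1\preceq_s T_2$ is exactly the lexicographic comparison of the integer sequences $\big(S_0(T_1),S_1(T_1),\dots\big)$ and $\big(S_0(T_2),S_1(T_2),\dots\big)$; thus $\preceq_s$ is a total preorder on the finite set $\mathcal{T}_{\mathscr{D}}$ and a first tree $T$ exists, unique up to cospectrality. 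Since all degrees in $\mathscr{D}$ are distinct, the recursive prescription of Definition~\ref{defgreedytree} has no ambiguity at any step, so the alternating greedy tree $G(\mathscr{D})$ is well defined and unique; the goal is to prove $T\cong G(\mathscr{D})$.

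First I would root the chosen first tree $T$ at the vertex dictated by the alternating greedy construction (a leaf of minimal neighbour degree, in the notation of Definition~\ref{defgreedytree}), equip $T$ with the induced height function $h$, and invoke Lemma~\ref{lemtapn2} for a longest path $P=v_0v_1\cdots v_tv_{t+1}$ of $T$: its degrees are forced to satisfy the alternating monotonicity recorded there, which identifies the ``spine'' of $T$ and fixes where the high-degree vertices sit relative to $P$. This reduces the theorem to the claim that, level by level, the remaining degrees are attached greedily; the two regimes of Definition~\ref{defgreedytree} (the ``spider'' base case and the recursive case) then correspond to the base case and the inductive step below.

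Next comes the heart of the argument: an induction on $n$ (equivalently, on the number of levels). Assume $T\not\cong G(\mathscr{D})$ and let $i$ be the topmost level at which the attachment of $T$ deviates from the greedy one. Because the degrees are distinct, the deviation has a clean form: there are vertices $u,v$ at level $i-1$ with $d_T(u)=d_T(v)$ and $\sum_{x\in N_T(u)}d_T(x)\ge\sum_{x\in N_T(v)}d_T(x)$ while the minimum-degree child $x_0$ of $u$ and the maximum-degree child $x_1$ of $v$ satisfy $d_T(x_0)<d_T(x_1)$, i.e.\ a heavy child is attached under the ``lighter'' parent. Applying Lemma~\ref{lemtapn1} in reverse to the swap $\{ux_0,vx_1\}\leftrightarrow\{ux_1,vx_0\}$ produces $T'\in\mathcal{T}_{\mathscr{D}}$ (the degree sequence is preserved) with $T'\prec_s T$, contradicting minimality. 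Once no such deviation survives, peel off the top block (the root together with the now fully-determined first level) to obtain a rooted forest with a leveled degree sequence; minimality of $T$ descends to this residual forest, since an $S$-smaller residual forest would lift to an $S$-smaller tree through the walk-count bookkeeping of Lemma~\ref{lempapern1}, and the inductive hypothesis finishes the proof.

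I expect the main obstacle to be the bookkeeping in the inductive step: showing that at the first level of deviation one can genuinely exhibit vertices meeting all three hypotheses of Lemma~\ref{lemtapn1} simultaneously — equal degrees, ordered neighbour-degree sums, and $d_T(x_0)<d_T(x_1)$ — and that the first spectral moment the swap disturbs strictly decreases; this is exactly where the distinctness of $\mathscr{D}$ and the alternating pattern supplied by Lemma~\ref{lemtapn2} must be used together. A secondary subtlety is the descent step: after removing the top block the residual leveled degree sequence need no longer have distinct entries, so the theorem's own hypothesis is unavailable at the next stage, and one has to be careful that distinctness was used only to guarantee uniqueness of $G(\mathscr{D})$ and the clean form of a deviation, while Lemma~\ref{lemtapn1} and the majorization in Lemma~\ref{lempapern1} remain valid without it.
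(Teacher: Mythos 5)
The paper does not actually prove Theorem~\ref{thmn1sec3}: it is imported verbatim from \cite{Wang2016YuanHS} as a cited result, so there is no in-text proof to measure your argument against. Judged on its own terms, your proposal has a genuine gap at what you yourself call the heart of the argument. The exchange step is built on Lemma~\ref{lemtapn1}, whose hypotheses require two vertices $u,v$ with $d_T(u)=d_T(v)$, each possessing children whose degrees can be compared and swapped. But the theorem you are proving assumes precisely that all degrees in $\mathscr{D}$ are distinct, so no two branching (non-leaf) vertices of $T$ can share a degree; the only vertices of equal degree are leaves, and leaves have no children to exchange. The lemma is therefore vacuous in exactly the regime where you invoke it, and the claimed ``clean form of a deviation'' --- two equal-degree parents at level $i-1$ with misassigned children --- cannot occur under the theorem's hypothesis. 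An exchange argument here has to compare children attached to parents of \emph{different} degrees, which is a different (and harder) statement than Lemma~\ref{lemtapn1}; the alternating monotonicity of Lemma~\ref{lemtapn2} combined with the $P_4$-counting identity of Lemma~\ref{lemtapn3} is the machinery that actually has to carry that comparison.

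There is a second, directional problem in the same step. Lemma~\ref{lemtapn1} concludes $T \prec_s T'$ for the post-swap tree $T'$, i.e.\ the swap moves you \emph{later} in the $S$-order. To contradict minimality of your chosen first tree you must exhibit an \emph{earlier} tree, which means recognizing $T$ as the output of such a swap and verifying the lemma's hypotheses in the putative pre-swap tree, not in $T$ itself; your write-up states the hypotheses for $T$ and then asserts $T'\prec_s T$, which is not what the lemma yields. Finally, the descent step (``minimality descends to the residual forest'') is asserted rather than proved: Lemma~\ref{lempapern1} gives a majorization of walk-count vectors against the greedy forest, not a statement that an $S$-smaller residual forest lifts to an $S$-smaller tree, so that implication would also need an explicit argument before the induction closes.
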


The $k$-th spectral moment $S_k(G)$ of a graph $G$ equals the count of closed walks of length $k$ in $G$, as seen in Lemma~\ref{lemtapn3}. For trees, which are acyclic, closed walks of length up to 6 correspond only to paths of length up to 3. Consequently, when considering two trees $T_1$ and $T_2$ sharing the same degree sequence, their spectral moments $S_k$ coincide for $k = 0,1,2,3,4,5$, and the difference in the sixth spectral moment reduces to the difference in counts of $P_4$ subgraphs (paths of length 3), weighted by 6.

\begin{lemma}[\cite{Wang2016YuanHS}]\label{lemtapn3}
The $k$-th spectral moment of a graph $G$ equals the number of closed walks of length $k$ in $G$. For trees, closed walks of length at most 6 are generated by paths of length at most 3. Thus, for a tree $T$ with a given degree sequence, the spectral moment $S_k(T)$ remains constant for $k=0,1,2,3,4,5$, and
\[
S_6(T_1) - S_6(T_2) = 6 \bigl( \phi_{T_1}(P_4) - \phi_{T_2}(P_4) \bigr),
\]
where $\phi_T(P_4)$ is the number of $P_4$ subgraphs in $T$.
\end{lemma}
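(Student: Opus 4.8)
The plan is to anchor everything on the classical identity $S_k(G)=\operatorname{tr}(A(G)^k)$, which equals the number of closed walks of length $k$ since the $(u,u)$-entry of $A^k$ counts $u$--$u$ walks of length $k$. I would first dispose of the cases $k\le 5$ and then compute $S_6$ exactly. The orienting observation behind ``closed walks of length at most $6$ are generated by paths of length at most $3$'' is that a closed walk on a tree traverses each edge of the subtree it spans an even number of times, so a closed walk of length at most $6$ spans a subtree with at most three edges, i.e.\ a copy of $P_2$, $P_3$, $P_4$, or $K_{1,3}$; among these only the number of $P_4$'s is not already pinned down by the degree sequence, which is why $\phi_T(P_4)$ is the one quantity that survives.

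For $k\le 5$: a tree is bipartite, hence has no closed walk of odd length, so $S_1=S_3=S_5=0$; trivially $S_0=n$ and $S_2=\operatorname{tr}(A^2)=\sum_v d_v=2m$. For $S_4$ I would use $\operatorname{tr}(A^4)=\sum_{u,v}(A^2)_{uv}^2=\sum_u d_u^2+\sum_{u\ne v}(A^2)_{uv}^2$; in a tree two distinct vertices have at most one common neighbour (two would produce a $4$-cycle), so $(A^2)_{uv}\in\{0,1\}$ for $u\ne v$ and $\sum_{u\ne v}(A^2)_{uv}^2=\sum_w d_w(d_w-1)$, giving $S_4(T)=2\sum_v d_v^2-2m$. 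Since $n$, $m$, $\sum_v d_v$, $\sum_v d_v^2$ depend only on the degree sequence, $S_0,\dots,S_5$ are constant over the trees realizing a fixed degree sequence, establishing the stated constancy.

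The substance is $S_6$. I would write $\operatorname{tr}(A^6)=\operatorname{tr}(A^3A^3)=\sum_{u,v}(A^3)_{uv}^2$ and evaluate $(A^3)_{uv}$ by the distance from $u$ to $v$: bipartiteness forces $(A^3)_{uv}=0$ unless $\operatorname{dist}(u,v)$ is odd, and a walk of length $3$ cannot join vertices at distance more than $3$, so only distances $1$ and $3$ contribute. A short count in a tree gives $(A^3)_{uv}=d_u+d_v-1$ when $u\sim v$ (a walk $u\,w_1\,w_2\,v$ either has $w_1=v$, so $w_2$ is any neighbour of $v$ and there are $d_v$ such walks, or has $w_1\in N(u)\setminus\{v\}$, which forces $w_2=u$ to avoid a cycle and contributes $d_u-1$ walks), and $(A^3)_{uv}=1$ when $\operatorname{dist}(u,v)=3$ (the unique $u$--$v$ path is the only length-$3$ walk). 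Since ordered pairs at distance $3$ are in two-to-one correspondence with $P_4$ subgraphs, this yields
\[
S_6(T)=2\sum_{\{u,v\}\in E(T)}(d_u+d_v-1)^2+2\,\phi_T(P_4).
\]

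To reconcile this with the claimed formula I would invoke $\phi_T(P_4)=\sum_{\{u,v\}\in E(T)}(d_u-1)(d_v-1)$ -- each $P_4$ has a unique middle edge, and in a tree the two pendant choices at the ends of that edge are automatically distinct, since otherwise they would form a triangle -- together with the algebraic identity $(d_u+d_v-1)^2=2(d_u-1)(d_v-1)+d_u^2+d_v^2-1$. Summing the latter over $E(T)$ and using $\sum_{\{u,v\}\in E(T)}(d_u^2+d_v^2)=\sum_v d_v^3$ gives $\sum_{\{u,v\}\in E(T)}(d_u+d_v-1)^2=2\,\phi_T(P_4)+\sum_v d_v^3-m$, hence
\[
S_6(T)=2\sum_v d_v^3-2m+6\,\phi_T(P_4).
\]
Since $\sum_v d_v^3$ and $m$ depend only on the degree sequence, subtracting this identity for $T_1$ and $T_2$ gives $S_6(T_1)-S_6(T_2)=6\bigl(\phi_{T_1}(P_4)-\phi_{T_2}(P_4)\bigr)$. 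I expect the real difficulty to be conceptual rather than computational: one must notice that $\sum_{\{u,v\}\in E}(d_u-1)(d_v-1)$, which is essentially a second-Zagreb-type quantity and is \emph{not} determined by the degree sequence on its own, is exactly the $P_4$-count, so that the entire ``degree-sequence-free'' part of $S_6$ is channelled through $\phi_T(P_4)$; the remaining work is the mildly fiddly bookkeeping in $(A^3)_{uv}$ for adjacent vertices.
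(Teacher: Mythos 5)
Your proof is correct, and I verified the key identities numerically (e.g.\ $S_6(P_4)=36$ and $S_6(K_{1,3})=54$ both match your formula $S_6(T)=2\sum_v d_v^3-2m+6\phi_T(P_4)$). Note, however, that the paper does not prove this lemma at all: it is imported verbatim from the cited reference, so there is no in-paper argument to compare against. Relative to the standard proof in that literature, which counts closed walks of length $k\le 6$ by classifying the subgraphs they span ($P_2$, $P_3$, $P_4$, $K_{1,3}$) and observing that all counts except $\phi_T(P_4)$ are degree-sequence functions, your route through $\operatorname{tr}(A^6)=\sum_{u,v}(A^3)_{uv}^2$ with the explicit evaluation $(A^3)_{uv}=d_u+d_v-1$ for adjacent pairs and $(A^3)_{uv}=1$ at distance $3$ is a clean and slightly more mechanical equivalent; it buys you an exact closed form for $S_6$ rather than just the difference formula, and it sidesteps the bookkeeping of how many closed walks each subgraph type generates. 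Your treatment of the small cases is also sound ($S_1=S_3=S_5=0$ by bipartiteness, $S_4=2\sum_v d_v^2-2m$ via the at-most-one-common-neighbour property), and you correctly flag the one imprecision in the lemma's own wording, namely that closed $6$-walks can span a $K_{1,3}$ and not only paths, which is harmless because $\sum_v\binom{d_v}{3}$ is determined by the degree sequence. The only substantive idea one must not miss --- that $\sum_{\{u,v\}\in E}(d_u-1)(d_v-1)=\phi_T(P_4)$ is the sole degree-sequence-independent contribution --- is present and justified.
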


In a different line of work, Villarreal \cite{Vill2006arreal} delivers a combinatorial characterization of unmixed bipartite graphs, highlighting conditions that uniquely describe their structure within bipartite graph classes. This characterization sheds light on their algebraic behavior, which is connected to resolutions and vertex cover lattices. Furthermore, the unmixed property for bipartite graphs relates closely to the structure of their edge ideals in commutative algebra; the minimal vertex covers correspond to generators of these ideals. This correspondence aids in determining algebraic invariants such as depth and projective dimension \cite{Mohammadi2009Moradi,Bolognini2018Macchia}. The following theorem outlines the equivalence between unmixedness and specific edge conditions in bipartite graphs:

\begin{theorem}[\cite{Vill2006arreal}]\label{unmixedgraph1}
Consider a bipartite graph $G$ without isolated vertices and with bipartition $V_1 = \{x_1,\ldots,x_g\}$, $V_2 = \{y_1,\ldots,y_g\}$. Then $G$ is unmixed if and only if the following hold:
\begin{enumerate}[label={\rm(\alph*)}]
    \item For each $i$, the edge $\{x_i,y_i\}$ belongs to $E(G)$.
    \item Whenever edges $\{x_i,y_j\}$ and $\{x_j,y_k\}$ are in $E(G)$ for distinct indices $i,j,k$, the edge $\{x_i,y_k\}$ also belongs to $E(G)$.
\end{enumerate}
\end{theorem}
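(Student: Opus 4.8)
The plan is to prove the two implications separately, using throughout the standard duality between minimal vertex covers and maximal independent sets: $C$ is a minimal vertex cover of $G$ exactly when $V(G)\setminus C$ is a maximal independent set, so ``$G$ is unmixed'' is equivalent to the statement that all maximal independent sets of $G$ have the same cardinality. The first observation I would record is that, since $G$ is bipartite with no isolated vertices, both $V_1$ and $V_2$ are themselves maximal independent sets of size $g$ (any $y_j$ added to $V_1$ creates an edge, because $y_j$ has a neighbour in $V_1$, and dually); hence if $G$ is unmixed then \emph{every} minimal vertex cover has size exactly $g$. This anchors both directions.

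For the implication $(a)\wedge(b)\Rightarrow G$ unmixed: let $C$ be any minimal vertex cover. By (a) the edges $\{x_i,y_i\}$, $i=1,\dots,g$, form a perfect matching, so $C$ meets each, giving $|C|\ge g$. If $|C|>g$, pick $i$ with $x_i,y_i\in C$; minimality forces an edge $\{x_i,y_\ell\}\in E(G)$ with $y_\ell\notin C$ and an edge $\{x_m,y_i\}\in E(G)$ with $x_m\notin C$, where $\ell\neq i$, $m\neq i$. If $m\neq\ell$, applying (b) to $\{x_m,y_i\}$ and $\{x_i,y_\ell\}$ (distinct indices $m,i,\ell$) yields $\{x_m,y_\ell\}\in E(G)$, an edge with both endpoints outside $C$ — contradiction; if $m=\ell$, then $\{x_\ell,y_\ell\}\in E(G)$ by (a) already has both endpoints outside $C$. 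Hence $|C|=g$, so $G$ is unmixed. This direction is routine bookkeeping.

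For the converse, assume $G$ is unmixed. By the first observation every minimal vertex cover has size $g$, so the vertex cover number is $g$, and by König's theorem $G$ has a perfect matching; after relabelling we may assume it is $\{x_i,y_i\}$, which is (a). Suppose (b) fails: $\{x_i,y_j\},\{x_j,y_k\}\in E(G)$ with $i,j,k$ distinct but $\{x_i,y_k\}\notin E(G)$. I would then exhibit an undersized maximal independent set. Take $I_0=\{x_i\}\cup\{y_\ell : \{x_i,y_\ell\}\notin E(G)\}$, which is independent, and extend it to a maximal independent set $I$. The $y$-part of $I$ is forced to be exactly $\{y_\ell : \{x_i,y_\ell\}\notin E(G)\}$ (every other $y_\ell$ is adjacent to $x_i\in I$), and since $x_m\in I$ forces $y_m\notin I$ (they are matched by (a)), the $x$-part of $I$ lies inside $\{x_m : \{x_i,y_m\}\in E(G)\}$. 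Crucially $x_j$ belongs to this last set (as $\{x_i,y_j\}\in E(G)$) yet cannot belong to $I$, because $\{x_j,y_k\}\in E(G)$ and $y_k\in I$ (as $\{x_i,y_k\}\notin E(G)$); so the $x$-part is a \emph{proper} subset of $\{x_m : \{x_i,y_m\}\in E(G)\}$, whence $|I|\le g-1$. Its complement is a minimal vertex cover of size at least $g+1$, contradicting unmixedness. Therefore (b) holds.

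The main obstacle is the construction in the converse: one must pick precisely the right seed independent set — the maximum-degree-ish vertex $x_i$ together with all of its non-neighbours among the $y_\ell$ — and then exploit the perfect matching to pin down the $x$-part of its maximal extension so tightly that the single failure of ``transitivity'' at $(i,j,k)$ strictly drops the size below $g$. The easier direction and the maximality/independence verifications are routine by comparison.
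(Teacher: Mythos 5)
The paper does not actually prove this theorem: it is quoted verbatim from Villarreal's paper \cite{Vill2006arreal} and used as a black box, so there is no in-paper argument to compare yours against. Judged on its own, your proof is correct and complete. The anchoring observation (both sides of the bipartition are maximal independent sets of size $g$, so unmixedness forces every minimal vertex cover to have size exactly $g$) is sound, the pigeonhole-plus-minimality argument in the forward direction correctly instantiates condition (b) with the index pattern $(m,i,\ell)$, and in the converse the seed set $I_0=\{x_i\}\cup\{y_\ell:\{x_i,y_\ell\}\notin E(G)\}$ together with the perfect matching from (a) does pin the maximal extension down to size at most $g-1$ once transitivity fails at $(i,j,k)$. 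Two small points worth making explicit if you write this up: first, the paper's introduction only defines ``unmixed'' for trees, so you should state the general definition (all minimal vertex covers equicardinal) that you are using, which is the one Villarreal intends; second, condition (a) as stated fixes a labelling, so your ``after relabelling'' step should be flagged as the standard reading of the theorem. For what it is worth, Villarreal's original argument works directly with minimal vertex covers rather than maximal independent sets and derives the perfect matching from the marriage/K\H{o}nig theorem in essentially the same way; your version is the dual of his and neither buys anything the other lacks.
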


\section{Bipartite Graph and Trees}~\label{sec5}
In this section, we interpret a tree $T$ as a bipartite graph $T(X,Y)$ where the cardinalities are $|X| = m$ and $|Y| = n$. Since $T$ is a tree, it contains exactly $m+n-1$ edges, which is strictly less than the bound $\left\lfloor \frac{m}{n+1} \right\rfloor (m - n) + 2m$. Within the framework of Theorem~\ref{thm1Bipartite}, it is shown that for the complete bipartite graph $K_{n,n}$, the term $\left\lceil \frac{n}{\lfloor k/2 \rfloor} \right\rceil - \left\lfloor \frac{n}{\lceil k/2 \rceil} \right\rfloor$ is at most 1.

\begin{theorem}[\cite{Lih96WuPl}]\label{thm1Bipartite}
Let $G = G(X, Y)$ be a connected bipartite graph. If $G$ is not isomorphic to any complete bipartite graph $K_{n,n}$, then $G$ admits an equitable coloring using $\Delta(G)$ colors.

Furthermore, the complete bipartite graph $K_{n,n}$ can be equitably colored with $k$ colors if and only if
\[
\left\lceil \frac{n}{\lfloor k/2 \rfloor} \right\rceil - \left\lfloor \frac{n}{\lceil k/2 \rceil} \right\rfloor \leqslant 1.
\]
\end{theorem}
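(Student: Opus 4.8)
The statement splits into two independent assertions. The ``furthermore'' clause about $K_{n,n}$ is purely arithmetic and I would dispatch it first, leaving the equality $\chi_{=}(G)=\Delta(G)$ (equitable chromatic number equal to maximum degree) for connected bipartite $G\not\cong K_{n,n}$ as the substantive part.

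For $K_{n,n}$ with sides $X,Y$ of size $n$, every independent set lies entirely in $X$ or entirely in $Y$, so a proper $k$-coloring is exactly a partition of $X$ into $a\ge 1$ classes together with a partition of $Y$ into $b=k-a\ge 1$ classes. For a fixed split $(a,b)$ the most balanced choice gives $X$-classes of sizes in $\{\lfloor n/a\rfloor,\lceil n/a\rceil\}$ and $Y$-classes of sizes in $\{\lfloor n/b\rfloor,\lceil n/b\rceil\}$, so, taking $a\le b$, the overall spread of class sizes is $\lceil n/a\rceil-\lfloor n/b\rfloor$ and an equitable $k$-coloring using this split exists iff this quantity is at most $1$. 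Since $\lceil n/a\rceil$ is non-increasing in $a$ while $\lfloor n/(k-a)\rfloor$ is non-decreasing as $a$ grows toward $k/2$, the spread is minimized over all admissible splits precisely at $a=\lfloor k/2\rfloor$, $b=\lceil k/2\rceil$. Hence $K_{n,n}$ admits an equitable $k$-coloring iff $\lceil n/\lfloor k/2\rfloor\rceil-\lfloor n/\lceil k/2\rceil\rfloor\le 1$, and the split $(\lfloor k/2\rfloor,\lceil k/2\rceil)$ realizes it whenever the inequality holds.

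For the main clause, first dispose of small maximum degree: if $\Delta(G)\le 1$ then $G\cong K_{1,1}$ (excluded) or a single vertex, and if $\Delta(G)=2$ then $G$ is a path or an even cycle, whose alternating $2$-coloring is equitable except for $C_4=K_{2,2}$, again excluded. So assume $\Delta:=\Delta(G)\ge 3$ and write the bipartition $X,Y$ with $|X|\ge|Y|$, $n=|X|+|Y|$; the target is $\Delta$ independent classes of sizes $\lceil n/\Delta\rceil$ or $\lfloor n/\Delta\rfloor$. Two facts drive the construction: (i) any subset of a single side is independent, so $X$ and $Y$ can be chopped freely into one-sided classes; and (ii) since $G\ne K_{n,n}$ there is a non-edge $xy$ with $x\in X$ and $y\in Y$, giving a mixed independent pair and, more generally, room to move a bounded number of vertices between the two sides. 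When $|X|$ and $|Y|$ can each be partitioned into blocks of size $\lceil n/\Delta\rceil$ or $\lfloor n/\Delta\rfloor$ totaling $\Delta$ blocks, fact (i) alone finishes; otherwise I would run a case analysis on $|X|,|Y|,\Delta$, either by an inductive peel --- deleting one legal-sized independent class $I$ that contains every degree-$\Delta$ vertex so that $\Delta(G-I)\le\Delta-1$, applying the induction hypothesis to $G-I$, and adjoining $I$, where the identity $\lceil(n-\lceil n/\Delta\rceil)/(\Delta-1)\rceil\le\lceil n/\Delta\rceil$ and its floor companion keep the outcome equitable --- or, when no such $I$ exists, by a direct construction that exploits fact (ii).

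The main obstacle is precisely the situation where no legal-sized class $I$ can absorb all degree-$\Delta$ vertices: peeling then cannot lower the maximum degree, so induction stalls. This already happens for $K_{n,n}$ minus a perfect matching, which is regular yet is not any $K_{m,m}$ and so must still be colored with $\Delta$ colors. In such near-extremal cases --- near-complete bipartite graphs, crown-type graphs, and their relatives --- one must build the $\Delta$ classes directly: use fact (ii) to convert one or two one-sided classes into mixed classes along non-edges, and invoke a Hall/K\"onig matching in the bipartite complement of $G$ to distribute the remaining vertices into classes of the prescribed sizes. Verifying that the hypothesis $G\ne K_{n,n}$ always supplies enough non-edges to run this matching, uniformly across all size regimes, is the technical heart of the proof; the surrounding floor/ceiling accounting is routine.
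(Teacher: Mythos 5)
The paper itself gives no proof of this theorem --- it is quoted verbatim from Lih and Wu --- so your proposal can only be measured against the original argument. Your treatment of the ``furthermore'' clause is correct and complete: in $K_{n,n}$ every independent set lies in one side, so a $k$-coloring is a split $k=a+b$ together with near-balanced partitions of the two sides, and the monotonicity of $\lceil n/a\rceil-\lfloor n/(k-a)\rfloor$ in $a$ shows the split $(\lfloor k/2\rfloor,\lceil k/2\rceil)$ is optimal. (One tiny slip: the alternating $2$-coloring of $C_4=K_{2,2}$ \emph{is} equitable; that case is simply excluded from the first clause and governed by the second, so nothing breaks.)

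The main clause, however, is not actually proved for $\Delta\geq 3$, which is where all the content lies. You offer two candidate strategies --- peel off a legal-sized independent class containing every degree-$\Delta$ vertex and induct, or build the $\Delta$ classes directly from one-sided pieces plus mixed classes along non-edges --- and you yourself concede that the first stalls (e.g.\ on $K_{n,n}$ minus a perfect matching, where no independent set meets all maximum-degree vertices) and that verifying the second ``uniformly across all size regimes\dots is the technical heart of the proof.'' That heart is missing. Concretely: (a) even where the peel is available, $G-I$ may be disconnected and may contain balanced complete bipartite components, so the induction hypothesis as stated does not apply to it; (b) for the direct construction you never determine how many mixed classes are required, what sizes they must take, or why the single non-edge guaranteed by $G\not\cong K_{n,n}$ (together with connectivity, which forces every vertex of one side to have a neighbour in the other and thus obstructs many candidate mixed classes) always suffices; the divisibility bookkeeping relating $|X|$, $|Y|$ and $\lceil(|X|+|Y|)/\Delta\rceil$ is precisely where Lih and Wu's case analysis does its work. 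As it stands the proposal establishes the arithmetic criterion for $K_{n,n}$ and the cases $\Delta\leq 2$, and correctly locates the difficulty for $\Delta\geq 3$, but stops where the published proof begins.
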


Consider now a connected bipartite graph $G(X,Y)$ with $\varepsilon$ edges satisfying $|X|=m \geq n = |Y|$ and
\[
\varepsilon < \left\lfloor \frac{m}{n+1} \right\rfloor (m - n) + 2m.
\]
Under these assumptions, the equitable chromatic number satisfies
\[
\chi_{\mathrm{e}}(G) \leqslant \left\lceil \frac{m}{n+1} \right\rceil + 1.
\]
This conclusion is corroborated by Corollary~\ref{cor1bip} which establishes the bound for trees $T$ viewed as bipartite graphs
\[
\chi_{\mathrm{e}}(T) \leqslant \left\lceil \frac{|X| + |Y| + 1}{\min \{|X|, |Y|\} + 1} \right\rceil.
\]

\begin{corollary}[\cite{Lih96WuPl}]\label{cor1bip}
For a tree $T$ seen as a bipartite graph $T(X,Y)$, the equitable chromatic number satisfies
\[
\chi_{\mathrm{e}}(T) \leqslant \left\lceil \frac{|X| + |Y| + 1}{\min\{|X|,|Y|\} + 1} \right\rceil.
\]
\end{corollary}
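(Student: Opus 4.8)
The plan is to reduce the corollary directly to the general equitable‑colouring bound for connected bipartite graphs recorded in the paragraph immediately preceding the statement (attributed to \cite{Lih96WuPl}): if $G(X,Y)$ is a connected bipartite graph with $|X|=m\geq n=|Y|$ and with $\varepsilon$ edges satisfying $\varepsilon < \lfloor \tfrac{m}{n+1}\rfloor(m-n)+2m$, then $\chi_{\mathrm e}(G)\leq \lceil \tfrac{m}{n+1}\rceil+1$. So the first step is purely notational: relabel the parts if necessary so that $m:=\max\{|X|,|Y|\}\geq n:=\min\{|X|,|Y|\}$, and recall that a tree $T$ on $m+n$ vertices is connected and has exactly $\varepsilon(T)=m+n-1$ edges.

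The second step is to check that $G=T$ meets the hypothesis of the cited bound. Since $m\geq n\geq 1$ we have $\lfloor \tfrac{m}{n+1}\rfloor(m-n)\geq 0$, hence $\lfloor \tfrac{m}{n+1}\rfloor(m-n)+2m\geq 2m$; and $2m>m+n-1$ is equivalent to $m\geq n$, which holds. Therefore $\varepsilon(T)=m+n-1<\lfloor \tfrac{m}{n+1}\rfloor(m-n)+2m$, and the cited result gives $\chi_{\mathrm e}(T)\leq \lceil \tfrac{m}{n+1}\rceil+1$.

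The third and final step is the elementary identity $\lceil x+1\rceil=\lceil x\rceil+1$ for real $x$, applied with $x=\tfrac{m}{n+1}$: this yields $\lceil \tfrac{m}{n+1}\rceil+1=\lceil \tfrac{m}{n+1}+1\rceil=\lceil \tfrac{m+n+1}{n+1}\rceil=\lceil \tfrac{|X|+|Y|+1}{\min\{|X|,|Y|\}+1}\rceil$, which is exactly the asserted upper bound; so the two estimates are in fact the same number and the corollary follows. One should also dispose in one line of the only degenerate case: if $T\cong K_{1,1}$ is a single edge, then $m=n=1$, $\chi_{\mathrm e}(T)=2$, and the right‑hand side is $\lceil 3/2\rceil=2$, so the inequality still holds. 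Note that the $K_{n,n}$ exception appearing in Theorem~\ref{thm1Bipartite} causes no trouble here, because the argument never invokes the ``$\Delta(G)$ colours'' clause of that theorem, only the edge‑count bound stated afterwards.

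As for the main obstacle: there is essentially none of substance — the whole content is absorbed into the cited edge‑count bound, and what remains is the routine inequality $\varepsilon(T)<\lfloor \tfrac{m}{n+1}\rfloor(m-n)+2m$ together with the ceiling arithmetic. The only spot that needs a moment's attention is confirming that this inequality is \emph{strict} for every admissible $m\geq n\geq 1$ (in particular when $m=n$), and observing that the hypothesis ``$G$ not isomorphic to $K_{n,n}$'' is irrelevant to the clause we use.
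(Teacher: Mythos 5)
Your proposal is correct and follows essentially the same route the paper sketches: the text of Section~\ref{sec5} immediately before the corollary already notes that a tree $T(X,Y)$ has $|X|+|Y|-1$ edges, which is strictly below the threshold $\left\lfloor \frac{m}{n+1}\right\rfloor(m-n)+2m$, and then invokes the general equitable-colouring bound $\chi_{\mathrm e}(G)\leq\left\lceil\frac{m}{n+1}\right\rceil+1$ from \cite{Lih96WuPl}. Your only additions — verifying strictness of the edge-count inequality for all $m\geq n\geq 1$, the ceiling identity $\left\lceil\frac{m}{n+1}\right\rceil+1=\left\lceil\frac{m+n+1}{n+1}\right\rceil$, and the $K_{1,1}$ check — are exactly the routine details the paper leaves implicit.
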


In \cite{B-Jackson}, the authors investigate 2-connected bipartite graphs $G$ with bipartition $(A,B)$ and minimum degree $l$. They prove that such graphs contain cycles of length at least $2 \min(|A|, |B|, 2l - 2)$. Utilizing Theorems~\ref{thm2Bipartite} and~\ref{thm5bipartite}, we consider the family $\mathcal{G}_2$ consisting of 2-connected bipartite graphs where every vertex in $A$ has degree at least $k$ and every vertex in $B$ at least $l$.

\begin{theorem}[\cite{B-Jackson}]\label{thm2Bipartite}
Let $G$ be a 2-connected bipartite graph with bipartition $(A,B)$, where $|A| \geq |B|$. Suppose that each vertex of $A$ has degree at least $k$, and each vertex of $B$ has degree at least $l$. Then $G$ contains a cycle of length at least $2 \min \left( |B|, k + l - 1, 2k - 2 \right).$
Moreover, if $k = l$ and $|A| = |B|$, then $G$ has a cycle of length at least $2 \min \left( |B|, 2k - 1 \right).$
\end{theorem}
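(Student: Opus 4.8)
The plan is to argue by contradiction from a longest cycle and to feed the endpoints of a longest path of a component of $G-V(C)$ into Lemma~\ref{lem1Bipartite}. Let $C$ be a longest cycle of $G$; since $G$ is bipartite, $|V(C)|=2t$ for some $t$ and $C$ meets each of $A,B$ in exactly $t$ vertices. Suppose, for contradiction, that $t<\min(|B|,k+l-1,2k-2)$. From $t<|B|\le|A|$ it follows that both $A\setminus V(C)$ and $B\setminus V(C)$ are nonempty, so $G-V(C)$ has a component $H$, and $2$-connectivity guarantees that $H$ has at least two neighbours on $C$ and is not separated from $G$ by a single vertex.

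Next I would fix the path. If $|V(H)|\ge 2$, let $P=u\cdots v$ be a longest path of $H$, of length $p\ge 1$; the usual longest-path argument gives $N_G(u),N_G(v)\subseteq V(P)\cup V(C)$, and since the neighbours of an endpoint of $P$ lie in one class of the bipartition, $|N_P(u)|,|N_P(v)|\le\lceil p/2\rceil$, so $|N_C(u)|\ge d_G(u)-\lceil p/2\rceil$ and similarly for $v$. I would then check that $q(u,v)\ge 2$: otherwise $N_C(\{u,v\})$ either is a single vertex or collapses to one side of a single arc of $C$, and in either case $C$, $H$ and $P$ combine to exhibit a cut vertex of $G$, contradicting $2$-connectivity; making this clean may require re-choosing $H$, or replacing $P$ by a longest path whose endpoints reach $C$ through distinct vertices (a Menger-type selection). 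Granting $q(u,v)\ge 2$, Lemma~\ref{lem1Bipartite} gives
\[
2t=|V(C)|\;\ge\;2\,\bigl|N_C(u)\cup N_C(v)\bigr|\;+\;p\,q(u,v).
\]

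The technical heart is to turn this into $t\ge\min(|B|,k+l-1,2k-2)$ by casework on the bipartition classes of the endpoints of $P$ and optimisation over $p$. If $u,v\in A$ (so $p$ even) one uses $d_G(u),d_G(v)\ge k$; the point is the trade-off built into Lemma~\ref{lem1Bipartite} — when $N_C(u)$ and $N_C(v)$ overlap, $q(u,v)$ is correspondingly larger (in the extreme of all-common neighbours $q(u,v)=2|N_C(u)\cup N_C(v)|$), so optimising the right-hand side over $p$ and over the amount of overlap yields $t\ge 2k-2$. If $P$ must have one endpoint in $A$ and one in $B$ (so $p$ odd), one plays $d_G(u)\ge k$ against $d_G(v)\ge l$ through $|N_C(u)\cup N_C(v)|\ge |N_C(u)|+|N_C(v)|-|N_C(u)\cap N_C(v)|$, again absorbing the intersection into the $p\,q(u,v)$ term, to land $t\ge k+l-1$. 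The degenerate case in which $G-V(C)$ is edgeless needs separate treatment: a singleton $w\in A$ off $C$ has all its $\ge k$ neighbours on $C\cap B$, giving $k\le t$, and a direct exchange of two interleaved singletons along $C$ — respecting the parities of arc lengths in the bipartite cycle — either builds a cycle longer than $C$ or forces $t\ge|B|$. Each branch contradicts $t<\min(|B|,k+l-1,2k-2)$. For the ``moreover'' clause, with $k=l$ and $|A|=|B|$ one has $k+l-1=2k-1>2k-2$, so the target is $2\min(|B|,2k-1)$; here I would sharpen the generic estimate by one unit using that $|V(C)|$ is even together with $|A|=|B|$, which excludes the borderline even value $4k-4$ allowed by the coarse bound — a short parity/counting refinement on top of the main case.

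I expect the main obstacle to be twofold. First, pinning down the choice of $H$ and of $P$ so that $q(u,v)\ge 2$ holds and the endpoints land in useful classes: this is where $2$-connectivity is genuinely consumed and normally needs an explicit cut-vertex argument or a Menger-style pair of internally disjoint $H$--$C$ paths. Second, the optimisation that converts the single inequality from Lemma~\ref{lem1Bipartite} into the three-term minimum: in each class configuration one must verify that the term $p\,q(u,v)$ exactly compensates the loss $-\lceil p/2\rceil$ in bounding $|N_C(u)|$ and the overlap $|N_C(u)\cap N_C(v)|$, then optimise over $p$. The edgeless case and the parity refinement for the balanced statement are comparatively routine once this machinery is in place.
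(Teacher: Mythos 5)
This theorem is quoted in the paper from Jackson's 1985 article \cite{B-Jackson}; the paper itself gives no proof, so there is nothing internal to compare against. Your plan does follow the strategy Jackson actually uses (longest cycle $C$, a longest path $P$ in a component of $G-V(C)$, and the inequality of Lemma~\ref{lem1Bipartite}), so the architecture is right. But as written it is a proof outline with the two decisive steps left open, and both are genuine gaps rather than routine verifications.

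First, the establishment of $q(u,v)\ge 2$ is not an afterthought: Lemma~\ref{lem1Bipartite} is vacuous without it, and your ``otherwise there is a cut vertex'' sketch does not yet work, because a single component $H$ attached to $C$ through two cycle vertices that both happen to be neighbours of the \emph{same} endpoint of $P$ gives no cut vertex yet still can have $q(u,v)\le 1$. One must re-select $P$ (or argue via two internally disjoint $H$--$C$ paths from Menger) so that the two attachment vertices are reached from \emph{different} endpoints; you name this possibility but do not carry it out, and the choice interacts with the parity of $p$ and with which classes $u,v$ land in, which is exactly what the three-term minimum depends on. Second, the optimisation step is incomplete in a way that matters: with $q(u,v)\ge 2$ and $N_C(u)\cap N_C(v)=\varnothing$ the computation $2\bigl(2k-p\bigr)+2p=4k$ is fine, but when the neighbourhoods overlap in $s$ vertices the union drops by $s$ while $q(u,v)$ need not rise by enough per unit of $s$ unless one proves a quantitative statement of the form ``each common neighbour contributes at least one extra admissible ordered pair''; that claim requires inspecting the arcs of $C$ between consecutive elements of $N_C(u)\cup N_C(v)$ and is where the constants $2k-2$ versus $k+l-1$ actually come from. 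Until that inequality is proved, the bound does not follow. The singleton case ($P$ of length $0$, excluded from Lemma~\ref{lem1Bipartite} which needs $p\ge 1$) and the balanced refinement to $2\min(|B|,2k-1)$ are likewise only asserted; the latter is not a one-line parity remark in Jackson's treatment but a separate argument. In short: correct skeleton, but the proof is not yet there.
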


\begin{theorem}[\cite{Hong25YYWu}]\label{thm5bipartite}
Let $\mathcal{G}_2$ denote the family of 2-connected bipartite graphs and define $\psi_2(G) = |V(G)|$ for every $G \in \mathcal{G}_2$. Then the pair $(\mathcal{G}_2, \psi_2)$ forms a $(2,2)$-extensible bipartite system such that every graph $G$ with minimum degree $\delta(G) \geq 2$ contains a subgraph $B \in \mathcal{G}_2$.
\end{theorem}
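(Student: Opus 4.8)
The statement packages two things: that the abstract pair $(\mathcal{G}_2,\psi_2)$ meets the axioms of a $(2,2)$-extensible bipartite system, and that, as the base clause of those axioms, every graph of minimum degree at least $2$ contains a member of $\mathcal{G}_2$. The plan is to verify the two axioms directly. Recall that a $(j,k)$-extensible bipartite system is a family $\mathcal{F}$ of $2$-connected bipartite graphs together with a monotone weight $\psi$ satisfying a \emph{base property} --- every graph with minimum degree at least $j$ contains a member of $\mathcal{F}$ --- and an \emph{extension property} --- whenever $B\in\mathcal{F}$ is a proper subgraph of a $2$-connected graph $H$, there is $B'\in\mathcal{F}$ with $B\subsetneq B'\subseteq H$ whose weight strictly exceeds that of $B$, the increase realisable by adjoining at most $k$ new vertices. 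With this in hand the displayed clause is exactly the base property for $j=2$, so it suffices to prove both axioms for $\mathcal{G}_2$ with $\psi_2=|V(\cdot)|$ and $j=k=2$.

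For the base property I would argue that a graph $G$ with $\delta(G)\ge 2$ contains an even cycle of length at least $4$: working inside the bipartite class in which the system is intended to live, a shortest cycle of $G$ is even of length $2t\ge 4$, and $C_{2t}$ with $t\ge 2$ is $2$-connected and bipartite, hence lies in $\mathcal{G}_2$. If a quantitatively larger starting member is wanted for the weight book-keeping, one instead invokes Theorem~\ref{thm2Bipartite} on a $2$-connected bipartite block of $G$ to obtain a long even cycle, which simultaneously gives a lower bound on $\psi_2$ of the starting member.

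The core of the argument is the extension property. Given $B\in\mathcal{G}_2$ with $B\subsetneq H$ and $H$ $2$-connected, I would use the ear-decomposition theorem: $H$ is obtained from any of its cycles by successively adjoining ears, so there is an $a$--$b$ path $P$ in $H$, internally disjoint from $B$, with $a,b\in V(B)$. Since $B\ne H$, one can force $P$ to use vertices outside $V(B)$: pick $w\in V(H)\setminus V(B)$ and, by $2$-connectivity of $H$ (Menger), take two internally disjoint $w$--$B$ paths whose union through $w$ is such an ear; this adds at least two new vertices. Adjoining $P$ to $B$ keeps the result $2$-connected and raises $\psi_2$ by $|V(P)|-2\ge 2$, the "$(2,2)$" increment; Lemma~\ref{lem1Bipartite} is the tool to make the growth quantitative if the definition demands a precise bound.

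The main obstacle is parity, i.e.\ keeping the enlarged graph bipartite. If $H$ is not itself bipartite, the ear $P$ together with the canonical $2$-colouring of $B$ closes an odd cycle unless the length of $P$ has the parity dictated by the colour classes of $a$ and $b$; so the extension step must deliver an ear of the correct parity. I would absorb this by using the degree freedom at $w$ (and at its neighbours, via $\delta(H)\ge 2$) to re-route one of the two $w$--$B$ paths to land on a vertex of $B$ of the required colour, and, if no such rerouting exists, instead attach through $w$ and one further neighbour a $4$-cycle gadget, correcting parity at the cost of one additional vertex. When the ambient class is genuinely bipartite --- as the name \emph{bipartite system} indicates --- this obstacle evaporates, since every cycle already has even length. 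Having established the base and extension properties, $(\mathcal{G}_2,\psi_2)$ satisfies the definition of a $(2,2)$-extensible bipartite system, and in particular every graph $G$ with $\delta(G)\ge 2$ contains a subgraph $B\in\mathcal{G}_2$, as claimed.
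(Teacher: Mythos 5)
The paper never proves this statement: Theorem~\ref{thm5bipartite} is quoted verbatim from \cite{Hong25YYWu} as an imported result, the term ``$(2,2)$-extensible bipartite system'' is nowhere defined in the text, and no argument for it appears anywhere in the manuscript. So there is no in-paper proof to compare your write-up against; what can be assessed is only the internal soundness of your attempt.

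On that score there are two genuine gaps. First, your entire verification rests on a definition of ``$(j,k)$-extensible bipartite system'' that you supplied yourself (base property plus an ear-type extension property with a weight increment). Since neither this paper nor the statement gives that definition, checking axioms you have invented does not establish the theorem; the actual axioms in \cite{Hong25YYWu} may well be different (for instance, the extension step there concerns keeping $G - V(B)$ highly connected, which is the point of ``redundant'' subgraphs, not merely enlarging $B$ inside $H$). Second, even granting your definition, two steps fail as written. The base clause, read literally as ``every graph $G$ with $\delta(G)\ge 2$ contains a member of $\mathcal{G}_2$,'' is false: $K_3$ has minimum degree $2$ and contains no $2$-connected bipartite subgraph, since any such subgraph would contain an even cycle. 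Your parenthetical ``working inside the bipartite class'' silently replaces the statement by a weaker one rather than proving it (and if that restriction is intended, it must be made explicit). Likewise, the parity-correction in your extension step is not an argument: when $H$ is not bipartite there is in general no way to reroute an ear to land on prescribed colour classes of $B$, and the ``$4$-cycle gadget'' you propose is not shown to exist --- a single outside vertex $w$ whose only two neighbours in $B$ lie in opposite colour classes at even distance along $B$ defeats both fixes. Either the ambient graph must be assumed bipartite throughout (in which case the parity discussion is vacuous and should be deleted), or a real argument is needed; as it stands the proof does not close.
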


If $G$ contains a cycle of length at least $2m$, Theorem~\ref{thm3Bipartite} provides constraints relating the number of edges $|E(G)|$ and vertices $|V(G)|$. Considering a bipartition $(A,B)$ with $|A|=a$ and $|B|=b$, where $2 \leq m \leq b \leq a$, the theorem states:

\begin{theorem}[\cite{B-Jackson}]\label{thm3Bipartite}
Let $G$ be a bipartite graph with bipartition $(A,B)$ where $|A|=a$, $|B|=b$, and $2 \leq m \leq b \leq a$. Then
\[
|E(G)| > 
\begin{cases}
b + (a - 1)(m - 1), & \text{if } b \leq 2m - 2, \\
(b + a - 2m + 3)(m - 1), & \text{if } b \geq 2m - 2.
\end{cases}
\]
\end{theorem}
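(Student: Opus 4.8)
The theorem is most naturally read as the extremal (Tur\'an-type) assertion that a bipartite graph on parts of sizes $a \ge b \ge m \ge 2$ with \emph{more} than the displayed number of edges must contain a cycle of length at least $2m$; I would prove it in contrapositive form, assuming that $G$ has no cycle of length $\ge 2m$ and bounding $|E(G)|$ from above by the right-hand side. The plan is to peel off the degenerate cases, reduce to an edge-maximal (hence $2$-connected) graph, pick a longest cycle $C$, control the bridges of $G - V(C)$ by means of Lemma~\ref{lem1Bipartite}, and then carry out an edge count that splits according to whether $b \le 2m-2$ or $b \ge 2m-2$.

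First I would dispose of the easy cases. If $G$ is a forest then $|E(G)| \le a + b - 1$, which, since $m \ge 2$ and $b \le a$, is below both branches of the bound, so we may assume $G$ contains a cycle. Enlarging $G$ on the fixed vertex set $A \cup B$ to be edge-maximal subject to having no cycle of length $\ge 2m$ only increases $|E(G)|$, so it suffices to treat that case; for such $G$ I would argue that $G$ is $2$-connected, since if $G$ were disconnected or had a cut vertex one could add an edge between two components, respectively between two blocks meeting at the cut vertex, with endpoints in opposite parts of the bipartition and without creating a cycle of length $\ge 2m$, contradicting maximality.

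Now let $C = c_1 c_2 \cdots c_{2k} c_1$ be a longest cycle of $G$; by hypothesis $k \le m-1$, and $C$ meets each of $A$ and $B$ in exactly $k$ vertices. For each component $H$ of $G - V(C)$, $2$-connectedness yields a path through $H$ between two vertices of $C$; applying Lemma~\ref{lem1Bipartite} to a longest such path $P$, with endpoints $u, v$ and length $p \ge 1$, the inequality $|V(C)| \ge 2|N_C(u) \cup N_C(v)| + p\,q(u,v)$ together with $|V(C)| = 2k \le 2m-2$ forces $q(u,v)$ to be small (at most $1$) and caps $|N_C(u) \cup N_C(v)|$; chasing these constraints, no vertex outside $C$ has more than $m-1$ neighbours on $C$, and each bridge is correspondingly restricted. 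This is the step I expect to be the main obstacle: extracting attachment and degree bounds for the bridges that are sharp enough for the two target expressions to come out without slack, and simultaneously bounding the chords of $C$ (using again that $C$ is longest and $G$ is $2$-connected).

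Finally I would partition $E(G)$ into the edges inside $C$, the edges between $V(C)$ and $V(G) \setminus V(C)$, and the edges inside $V(G) \setminus V(C)$: the first group is bounded by the $2k$ cycle edges plus the controlled chords, the second by $(m-1)$ times the number of vertices off $C$ via the degree bound above, and the third is negligible. When $b \le 2m-2$, few $B$-vertices can lie off $C$; grouping the $b$ vertices of $B$ against the $a$ vertices of $A$ with multiplicity $m-1$, plus a $+b$ contribution from a near-matching/pendant structure, gives $b + (a-1)(m-1)$ (the extremal graph being $K_{m-1,a}$ together with $b-(m-1)$ pendant $B$-vertices). When $b \ge 2m-2$, the value $k = m-1$ must be attained, and the $a + b - 2(m-1)$ vertices lying off $C$ each contribute at most $m-1$ edges, which combined with the $C$-edges yields $(a + b - 2m + 3)(m-1)$. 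In either regime the right-hand side of the theorem is an upper bound for $|E(G)|$, with equality only at the extremal configuration, so any $G$ exceeding it has a cycle of length $\ge 2m$; a direct check that the base case $m = 2$ is already covered by these expressions completes the argument.
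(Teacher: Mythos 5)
The first thing to note is that the paper does not prove this statement: it is quoted from Jackson \cite{B-Jackson} without proof, and as printed it is not even a well-formed claim (it asserts an unconditional lower bound on $|E(G)|$, which the edgeless bipartite graph violates). Your reading of it as the Tur\'an-type assertion ``more than this many edges forces a cycle of length at least $2m$'' is the intended one, and your identification of the extremal configuration $K_{m-1,a}$ together with $b-(m-1)$ pendant vertices of $B$ is correct. So there is no proof in the paper to compare against; judging your sketch on its own terms, it has two genuine gaps.

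First, the reduction to the $2$-connected case fails. An edge-maximal bipartite graph with no cycle of length at least $2m$ need not be $2$-connected: for $m=3$, two copies of $K_{2,2}$ glued at a single vertex have a cut vertex, contain no $6$-cycle, and admit no further edge, because every bipartite edge joining the two blocks closes a $6$-cycle through the cut vertex. Your claim that one can always add an edge across a cut vertex ``without creating a cycle of length $\ge 2m$'' is exactly what fails there, since the new cycle runs through the cut vertex and can be long. Second, the central count is never carried out, and the version you describe does not close: in the case $b \ge 2m-2$, charging $2k \le 2m-2$ edges to $C$ and at most $m-1$ edges to each of the $a+b-2m+2$ vertices off $C$ already yields $(m-1)(a+b-2m+4)$, which exceeds the target $(m-1)(a+b-2m+3)$ by $m-1$, before chords of $C$ and edges inside $G-V(C)$ (which are not negligible, since $G-V(C)$ may contain further dense blocks) are accounted for; your use of Lemma~\ref{lem1Bipartite} is also inverted, as that lemma only gives its conclusion under the hypothesis $q(u,v)\ge 2$ rather than ``forcing $q(u,v)\le 1$.'' The standard route, and the one Jackson takes, is an induction on $a+b$: delete a vertex of degree at most $m-1$ and check that the edge hypothesis survives, and when no such vertex exists apply the minimum-degree longest-cycle theorem (Theorem~\ref{thm2Bipartite}) to a suitable $2$-connected block. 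I would restructure your argument along those lines rather than around an edge-maximal counterexample.
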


The subsequent theorem provides a precise condition guaranteeing the existence of long cycles in bipartite graphs, which is valuable in graph theory and applications requiring cycle length control:

\begin{theorem}[\cite{B-Jackson}]
Let $G$ be a bipartite graph with bipartition $(A,B)$ such that each vertex in $A$ has degree at least $k \geq 2$. If
\[
|B| \leq \left\lceil \frac{|A|}{k-1} \right\rceil (k-1),
\]
then $G$ contains a cycle of length at least $2k$.
\end{theorem}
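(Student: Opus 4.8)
The plan is to prove the contrapositive: assuming $G$ has no cycle of length $\ge 2k$---equivalently, since $G$ is bipartite, that its circumference is at most $2k-2$---I will deduce $|B|>\lceil |A|/(k-1)\rceil(k-1)$. The degree hypothesis already forces $|B|\ge k$, and hence, once the target inequality is negated, $|A|\ge k$ as well. Two cases are then immediate. If $G$ is acyclic, then in each tree component $|E_i|=|V_i|-1\ge k|A_i|$ forces $|B_i|\ge (k-1)|A_i|+1$, hence $|B|\ge(k-1)|A|+1$, which exceeds $|A|+k-2\ge\lceil |A|/(k-1)\rceil(k-1)$. If instead every component of $G$ has at most $k-1$ vertices in $A$, then---since isolated $B$-vertices only help---each component meeting $A$ has at least $k$ vertices in $B$, so $|B|\ge tk>t(k-1)\ge\lceil |A|/(k-1)\rceil(k-1)$, where $t$ is the number of components meeting $A$. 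So from now on some component $G_0$ of $G$ contains a cycle and has $|A_0|\ge k$, and the task is to contradict $|B|\le\lceil |A|/(k-1)\rceil(k-1)$ using $G_0$ together with the global count of $B$-vertices.

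Inside $G_0$ I would run a longest-cycle analysis built on Lemma~\ref{lem1Bipartite}. Fix a longest cycle $C$, with $|C|=2r\le 2k-2$; since $C$ meets $A$ in only $r<k\le|A_0|$ vertices, pick $a^{\ast}\in A_0\setminus V(C)$, let $H$ be the component of $G_0-V(C)$ containing $a^{\ast}$, and let $P=w_0w_1\cdots w_p$ be a longest path of $H$ with an endpoint $u=w_0\in A$ all of whose neighbours in $H$ lie on $P$. If $N_C(u)=\varnothing$, then all $\ge k$ neighbours of $u$ lie on $P$, at its odd positions, so the farthest of them occupies a position $\ge 2k-1$ and spans a cycle of length $\ge 2k$, a contradiction. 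Otherwise let $v$ be the other endpoint of $P$; if the hypothesis $q(u,v)\ge 2$ of Lemma~\ref{lem1Bipartite} holds, that lemma gives
\[
2r=|C|\ \ge\ 2\,|N_C(u)\cup N_C(v)|+p\cdot q(u,v)\ \ge\ 2\,|N_C(u)|+2p,
\]
and writing $d=|N_P(u)|$, the degree hypothesis gives $|N_C(u)|\ge k-d$, while the $d$ neighbours of $u$ on $P$ occupy distinct odd positions and so $p\ge 2d-1$; substituting, $|C|\ge 2(k-d)+2(2d-1)=2k+2d-2\ge 2k$, again contradicting $|C|\le 2k-2$.

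I expect the real obstacle to be exactly the two clauses invoked above---that a longest path of $H$ can be taken with an $A$-endpoint whose $H$-neighbours all lie on it, and that $q(u,v)\ge 2$---since both can genuinely fail: a longest path of $H$ may have both endpoints in $B$ (and P\'osa rotations from a $B$-endpoint stay in $B$), and $q(u,v)$ can vanish when $v$ has no neighbour on $C$ while the neighbours of $u$ on $P$ are packed into its initial segment. Small examples confirm these configurations occur---but always together with many low-degree vertices in $B$---and it is precisely here that the hypothesis $|B|\le\lceil |A|/(k-1)\rceil(k-1)$ must be spent: one argues that any such stuck configuration forces either enough degree-small vertices in $B$, or enough spreading of the neighbourhoods of the $A$-vertices outside $C$ over $B$, that $|B|$ already exceeds $\lceil |A|/(k-1)\rceil(k-1)$, which is the desired conclusion. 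A fallback in a stuck component is to pass to its block structure and apply Theorem~\ref{thm2Bipartite} to a $2$-connected block. Executing this case split carefully, and making the counting match $\lceil |A|/(k-1)\rceil(k-1)$ on the nose, is the technical heart; the remaining steps are routine.
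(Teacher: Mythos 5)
The paper states this theorem only as a citation to Jackson~\cite{B-Jackson} and contains no proof of it, so there is nothing internal to compare your argument against; judged on its own terms, your proposal is a plan with a hole at its centre rather than a proof. The peripheral work is essentially right: the acyclic case, the case where every component meets $A$ in at most $k-1$ vertices, and the arithmetic $\left\lceil |A|/(k-1)\right\rceil(k-1)\le |A|+k-2$ all check out, modulo one organisational slip --- negating those two cases only guarantees that \emph{some} component contains a cycle and \emph{some} component has at least $k$ vertices of $A$, not that a single component does both; you need to run the trichotomy component by component and sum the local bounds, using $\sum_i\left\lceil |A_i|/(k-1)\right\rceil\ge\left\lceil |A|/(k-1)\right\rceil$ plus the $+1$ per component to recover the global strict inequality. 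The genuine gap is in the main case. Your computation via Lemma~\ref{lem1Bipartite} is correct \emph{conditionally}: if $P$ is a path in a component $H$ of $G-V(C)$ with an endpoint $u\in A$ all of whose $H$-neighbours lie on $P$, then either $N_C(u)=\varnothing$ yields a $2k$-cycle inside $H$, or $q(u,v)\ge 2$ yields $|C|\ge 2(k-d)+2(2d-1)\ge 2k$. But you yourself identify that both hypotheses can fail --- a longest path of $H$ may have both endpoints in $B$, and $q(u,v)=0$ whenever $v$ has no neighbour on $C$ --- and at exactly that point the argument stops. The proposed remedies ("spend the hypothesis $|B|\le\lceil|A|/(k-1)\rceil(k-1)$ to show a stuck configuration forces too many $B$-vertices", or "pass to a $2$-connected block and apply Theorem~\ref{thm2Bipartite}") are statements of intent with no counting actually performed; it is not even specified what would be counted, and a single stuck component need not by itself contribute enough vertices to $B$, so the deficit would have to be aggregated over all components of $G-V(C)$ together with $V(C)\cap B$. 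That aggregation is the entire content of the theorem and it is absent.

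To close the gap you would need an explicit mechanism for one of two things: either a selection/rotation argument that always produces, in some component of $G-V(C)$, a path whose endpoints satisfy the hypotheses of Lemma~\ref{lem1Bipartite} with $|N_C(u)\cup N_C(v)|$ and $p$ large enough (for instance by choosing $H$ and $P$ to maximise these quantities jointly, rather than taking an arbitrary longest path of $H$), or a quantitative lemma converting the failure of $q(u,v)\ge 2$ in every component into the lower bound $|B|>\left\lceil |A|/(k-1)\right\rceil(k-1)$. Until one of these is supplied, the proof is incomplete at precisely the step that distinguishes this theorem from the easy forest bound.
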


Lemma~\ref{lem2Bipartite} addresses the connectivity in bipartite graphs with minimum degree $k \geq 2$ and provides significant insight into the traversability of vertices in part $B$:

\begin{lemma}[\cite{B-Jackson}]\label{lem2Bipartite}
Let $G$ be a bipartite graph with bipartition $(A,B)$ and minimum degree $k \geq 2$. If
\[
|B| < \min(|A|, 2k-2),
\]
then for any two vertices of $G$, there exists a path containing all vertices of $B$ that joins them.
\end{lemma}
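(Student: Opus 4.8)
The plan is to argue by contradiction using an extremal \(x\)--\(y\) path; we may assume \(G\) is connected (otherwise no path can span \(B\)). Among all paths \(P=v_0v_1\cdots v_\ell\) with \(v_0=x\) and \(v_\ell=y\), I would choose one that first maximizes \(|V(P)\cap B|\), then maximizes \(\ell\), and then is extremal for a suitable auxiliary quantity measuring how the vertices of \(B\setminus V(P)\) attach to \(P\) (this last tie-break is what will force the local modifications below to terminate). Suppose \(B\not\subseteq V(P)\) and put \(Z=B\setminus V(P)\ne\varnothing\), \(b=|B|\). Since \(P\) alternates between \(A\) and \(B\), \(|V(P)\cap A|\le |V(P)\cap B|+1=b-|Z|+1\le b\le 2k-3\); and since \(|A|>b\ge |V(P)\cap A|\) there is a vertex of \(A\) off \(P\). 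One then distinguishes whether each of \(x,y\) lies in \(A\) or in \(B\) (the two endpoints playing symmetric roles), as this only affects the parity of the \(B\)-positions on \(P\) and the moves available at the ends.

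First I would prove that every \(w\in Z\) has \(N(w)\subseteq V(P)\). If \(w\) had a neighbour \(a\in A\setminus V(P)\), then \(a\) has at least \(k\ge 2\) neighbours, all in \(B\); using the spare vertex of \(A\) and, if necessary, a second \(B\)-neighbour of \(a\), one splices the short path through \(a\) and \(w\) across a suitable edge of \(P\) to get an \(x\)--\(y\) path with strictly more vertices of \(B\), contradicting maximality. So \(N(w)\subseteq V(P)\cap A\) for every \(w\in Z\).

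The heart of the proof is a crossing step in the spirit of Lemma~\ref{lem1Bipartite}. Fix \(w\in Z\) and order the vertices of \(V(P)\cap A\) along \(P\). I claim \(w\) is not adjacent to two of them that are consecutive on \(P\): if \(w\sim v_i\) and \(w\sim v_{i+2}\) with \(v_{i+1}\in B\), then replacing \(v_iv_{i+1}v_{i+2}\) by \(v_i\,w\,v_{i+2}\) gives an \(x\)--\(y\) path of the same length and with the same number of \(B\)-vertices, but with \(Z\) changed to \((Z\setminus\{w\})\cup\{v_{i+1}\}\); one checks this strictly improves the auxiliary quantity, or else, applying the previous step to \(v_{i+1}\), produces an \(x\)--\(y\) path with more \(B\)-vertices, either way contradicting the choice of \(P\). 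Consequently, between any two \(P\)-consecutive neighbours of \(w\) in \(V(P)\cap A\) there lies a further vertex of \(V(P)\cap A\), so the \(\ge k\) neighbours of \(w\) together with the \(\ge k-1\) intervening vertices force \(|V(P)\cap A|\ge 2k-1\), contradicting \(|V(P)\cap A|\le 2k-3\). Hence \(Z=\varnothing\) and \(P\) spans \(B\).

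The hard part will be the bookkeeping in the two reduction steps: choosing the auxiliary quantity so that the swap in the crossing step is genuinely monotone whenever it does not already create a path with more vertices of \(B\) (so the process cannot loop), and handling the cases in which \(x\) or \(y\) lies in \(B\), where the first and last vertices of \(P\) are themselves in \(B\) and the end moves are more constrained. Finally, note that the hypothesis \(|B|<\min(|A|,2k-2)\) is used in both of its parts and is sharp: the term \(|A|\) supplies the off-path vertex of \(A\) needed for the splicing, while the term \(2k-2\) is exactly what makes the forced count \(2k-1\) in the crossing step exceed the at most \(2k-3\) vertices of \(A\) available on \(P\).
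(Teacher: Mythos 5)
First, a point of reference: the paper contains no proof of Lemma~\ref{lem2Bipartite}. It is imported verbatim from Jackson~\cite{B-Jackson} and stated without argument, so there is no in-paper proof to compare yours against; your proposal has to stand on its own.

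On its own terms it is a plan rather than a proof, and the gap sits exactly at its central step. The final contradiction ($|V(P)\cap A|\ge 2k-1$ against $|V(P)\cap A|\le 2k-3$) requires the claim that $w\in B\setminus V(P)$ is never adjacent to two $A$-vertices $v_i,v_{i+2}$ that are consecutive on $P$. The operation you propose to rule this out --- replacing $v_iv_{i+1}v_{i+2}$ by $v_i\,w\,v_{i+2}$ --- yields a path $P'$ with the same length, the same number of $B$-vertices and the same $A$-vertex set, in which $v_{i+1}$ now sits off the path and is adjacent to the consecutive pair $v_i,v_{i+2}$ of $P'$; the move is its own inverse. None of the quantities you actually name distinguishes $P$ from $P'$, and the ``auxiliary quantity'' that is supposed to make the process monotone is never defined. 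Your fallback (``apply the previous step to $v_{i+1}$'') only produces a better path when $v_{i+1}$ has a neighbour in $A\setminus V(P)$; if $N(v_{i+1})\subseteq V(P)\cap A$ as well --- a perfectly possible configuration --- no contradiction arises, and one is left with two interchangeable $B$-vertices and no progress. This is precisely where the real work lies (in arguments of this type one gets the extra leverage from the endpoints, from a second exchangeable vertex and a common-neighbour count, or from a cycle-closure step in the spirit of Lemma~\ref{lem1Bipartite}), and the proposal defers it. The first reduction step is also only gestured at: to splice $a\in A\setminus V(P)$ and $w$ into $P$ you must exhibit an edge $v_iv_{i+1}$ of $P$ with $v_i\sim w$ and $v_{i+1}\sim a$ (or the mirror configuration), and the degree counting that guarantees such a pair under $|B|\le 2k-3$ is not carried out. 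The numerology at the end ($k$ neighbours plus $k-1$ forced gaps exceeding the at most $2k-3$ available $A$-vertices) is the right target, but as written the argument does not reach it.
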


Building upon the work of \cite{Hong25YYWu}, it is known that every $k$-connected graph $G$ with minimum degree $\delta(G) \geq \left\lfloor \frac{3k}{2} \right\rfloor$ has a vertex $x$ such that $\kappa(G - x) \geq k$. Employing these concepts within the context of bipartite graphs, Conjecture~\ref{conjn1bipartite} posits the existence of $k$-redundant trees in sufficiently connected bipartite graphs.

\begin{conjecture}[\cite{Hong25YYWu}]\label{conjn1bipartite}
For any tree $T$ of order $m$, every $k$-connected graph $G$ with minimum degree
\[
\delta(G) \geq \left\lfloor \frac{3k}{2} \right\rfloor + m - 1
\]
contains a $k$-redundant tree isomorphic to $T$.
\end{conjecture}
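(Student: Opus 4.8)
The plan is to prove the conjecture by induction on the order $m$ of $T$, where by a \emph{$k$-redundant copy} of $T$ in $G$ I mean a subtree $T^{*}\subseteq G$ with $T^{*}\cong T$ and $\kappa(G-V(T^{*}))\ge k$; under this reading the case $m=1$ (so $T=K_{1}$) is exactly the result cited just above, that every $k$-connected graph with $\delta\ge\lfloor 3k/2\rfloor$ has a vertex whose deletion keeps it $k$-connected. For the inductive step I would grow the copy of $T$ greedily, one vertex at a time. Fix a breadth-first ordering $t_{1},\dots,t_{m}$ of $V(T)$, so that each $t_{i}$ with $i\ge 2$ has a unique neighbour $t_{\pi(i)}$ among $t_{1},\dots,t_{i-1}$, and construct images $x_{1},\dots,x_{m}$ in $G$ maintaining the invariant that $x_{1},\dots,x_{i}$ are distinct, realise $T[t_{1},\dots,t_{i}]$ as a subgraph, and that $H_{i}:=G-\{x_{1},\dots,x_{i}\}$ is $k$-connected. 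Since $\delta(H_{i})\ge\delta(G)-i\ge\lfloor 3k/2\rfloor+(m-1-i)$, the residual minimum degree stays at least $\lfloor 3k/2\rfloor$ for every $i\le m-1$. Choosing $x_{1}$ is the base case; at step $i+1$ one must select $x_{i+1}$ among the $G$-neighbours of $x_{\pi(i+1)}$ that lie in $H_{i}$ so that in addition $\kappa(H_{i}-x_{i+1})\ge k$, and a short count shows that at least $\lfloor 3k/2\rfloor+1$ candidates satisfy the adjacency constraint; the crux is whether one of them also satisfies the connectivity constraint. After $m$ steps the subtree $T^{*}$ on $x_{1},\dots,x_{m}$ then satisfies $\kappa(G-V(T^{*}))=\kappa(H_{m})\ge k$, as required.

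Thus the whole problem reduces to a \emph{localised} form of the Chartrand--Kaugars--Lick-type result cited above: in a $k$-connected graph $H$ with $\delta(H)\ge\lfloor 3k/2\rfloor$, a prescribed set $A\subseteq V(H)$ with $|A|\ge\lfloor 3k/2\rfloor+1$ (here $A=N_{G}(x_{\pi(i+1)})\cap V(H_{i})$) ought to contain a vertex $x$ with $\kappa(H-x)\ge k$. The step I expect to be the genuine obstacle is precisely this, because the naive localised statement is false: take a long chain of cliques $X_{0},X_{1},\dots,X_{t}$, each of size $a$, glued along separating $k$-sets $Z_{1},\dots,Z_{t}$ so that each $X_{i-1}\cup Z_{i}\cup X_{i}$ is a clique, and attach a new vertex $y$ adjacent exactly to $Z_{1}\cup Z_{2}$. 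The resulting graph is $k$-connected with minimum degree $2a+k-1$, which can be made arbitrarily large, yet every vertex of $Z_{1}\cup\cdots\cup Z_{t}$ is critical, so $N(y)$ is entirely critical. Hence the greedy cannot attach the next leaf to an arbitrary frontier vertex: one must use the freedom in how $T$ is placed, choosing at each step a vertex of the partial copy whose residual neighbourhood does contain a non-critical vertex, and proving that such a choice always exists --- which is already awkward when $T$ is a path and the ``frontier'' is a single vertex, so that some backtracking or re-routing of earlier choices is forced.

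Making that precise appears to need a global analysis of the lattice of minimum vertex separators of $G$: one wants to show that, once $\delta(G)\ge\lfloor 3k/2\rfloor+m-1$, the ``thin'' parts of $G$ witnessed by critical vertices cannot all simultaneously obstruct the construction of an $m$-vertex tree with $k$-connected complement. A plausible vehicle is the extensible-system machinery underlying Theorem~\ref{thm5bipartite} and \cite{Hong25YYWu}: one would aim to verify that the family of pairs $(G,T^{*})$ with $T^{*}$ a $k$-redundant copy of $T$ forms an $(a,b)$-extensible family for suitable parameters, reducing the general case to small base graphs through vertex deletions that respect the connectivity budget, the minimum-degree budget, and the partially built embedding all at once. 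That simultaneous bookkeeping is where the bound $\lfloor 3k/2\rfloor+m-1$ is essentially tight, and is, I expect, the reason the statement is still only conjectured.
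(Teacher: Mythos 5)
This statement is labelled a \emph{conjecture} in the paper and is attributed to \cite{Hong25YYWu}; the paper supplies no proof of it, so there is nothing to compare your argument against --- and, by your own account, your proposal is not a proof either. The genuine gap is exactly the one you isolate in your second paragraph: the greedy induction needs a \emph{localised} version of the fact that a $k$-connected graph with $\delta\ge\lfloor 3k/2\rfloor$ has a non-critical vertex, namely that the residual neighbourhood $A=N_{G}(x_{\pi(i+1)})\cap V(H_{i})$ of the frontier vertex always contains a vertex whose deletion preserves $k$-connectivity. Your chain-of-cliques construction shows this localised statement is false in general (every vertex of $Z_{1}\cup\cdots\cup Z_{t}$ lies in a minimum separator and is therefore critical, yet $N(y)\subseteq Z_{1}\cup Z_{2}$), so the inductive step as formulated cannot go through, and the proposed repairs --- backtracking over earlier placements, or packaging the construction as an $(a,b)$-extensible family in the sense of Theorem~\ref{thm5bipartite} --- are only gestured at, not carried out. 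Until one of those is made precise, the argument establishes only the base case $m=1$.

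Two smaller points are worth flagging. First, your counterexample's special vertex $y$ has degree $2k$, which satisfies $\delta\ge\lfloor 3k/2\rfloor$ but violates the conjecture's stronger hypothesis $\delta(G)\ge\lfloor 3k/2\rfloor+m-1$ once $m>\lceil k/2\rceil+1$; so the example refutes the naive localised lemma but does not show the conjecture's degree bound is insufficient for a cleverer greedy --- the extra $m-1$ in the hypothesis is precisely the slack a correct argument would have to exploit, and your invariant $\delta(H_{i})\ge\lfloor 3k/2\rfloor+(m-1-i)$ spends it only on keeping degrees high, not on escaping critical sets. Second, note that the partial results actually proved in \cite{Hong25YYWu} (Theorem~\ref{thm4bipartite} and Corollary~\ref{cor1bipartite}) are restricted to $k\le 3$ and to bipartite hosts with degree conditions on each side separately, which is further evidence that the general inductive scheme you outline does not close without substantially new ideas.
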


The following theorem confirms the existence of such embeddings for small $k$ under bipartite constraints.

\begin{theorem}[\cite{Hong25YYWu}]\label{thm4bipartite}
Let $k \leq 3$ be a positive integer and $T$ be a tree with bipartition $(Z_1,Z_2)$. Suppose $G$ is a $k$-connected bipartite graph with bipartition $(U_1,U_2)$ satisfying, for $i=1,2$,
\[
\delta_G(U_i) \geq |Z_{3-i}| + k.
\]
Then there exists an embedding $\phi: T \rightarrow G$ such that $G - \phi(T)$ remains $k$-connected and
\[
\phi(Z_i) \subseteq U_i \quad \text{for } i=1,2.
\]
\end{theorem}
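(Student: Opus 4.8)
The plan is to argue by induction on $|V(T)|$, deleting one leaf of $T$ at each step and re-attaching it at the end, with essentially all of the difficulty packed into a connectivity lemma that holds only for $k\le 3$. Call a vertex $x$ of a $k$-connected graph $H$ \emph{essential} if $\kappa(H-x)<k$. The engine of the proof will be a lemma of the shape: \emph{if $H$ is a $k$-connected bipartite graph with $k\le 3$, parts $X,Y$, in which every vertex of $Y$ has degree at least $k+1$, then at most $k$ vertices of $X$ are essential} (this is the role I expect Lemma~\ref{lem4bipartite} to play; its precise hypotheses should be exactly what the degree conditions on $G$ guarantee for the graph $H$ appearing below). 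I would first reduce the theorem to such a lemma, then comment on the lemma itself.

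For the induction, observe that a single vertex of $U_{3-i}$ already has $\ge |Z_i|+k$ neighbours, all in $U_i$, so $|U_i|\ge |Z_i|+k$, and symmetrically $|U_{3-i}|\ge|Z_{3-i}|+k$. \emph{Base case} $|V(T)|=1$: say $V(T)=\{z\}$ with $z\in Z_i$; then $|U_i|\ge 1+k$, every vertex of $U_i$ has degree $\ge k$, and every vertex of $U_{3-i}$ has degree $\ge|Z_i|+k\ge k+1$, so the lemma applies to $H=G$ with $(X,Y)=(U_i,U_{3-i})$ and yields a non-essential $v\in U_i$; put $\phi(z)=v$. \emph{Inductive step}: choose a leaf $z$ of $T$ with neighbour $z'$, let $i$ be the index with $z\in Z_i$ (so $z'\in Z_{3-i}$), and set $T^-=T-z$, with bipartition classes $Z_i^-=Z_i\setminus\{z\}$ and $Z_{3-i}^-=Z_{3-i}$. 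The degree hypotheses for $(T^-,G)$ follow from those for $(T,G)$: the constraint on $U_i$ is unchanged, and the one on $U_{3-i}$ only weakens from $|Z_i|+k$ to $|Z_i|-1+k$. So by induction there is an embedding $\phi^-\colon T^-\to G$ with $\phi^-(Z_j^-)\subseteq U_j$ for $j=1,2$ and $H:=G-\phi^-(T^-)$ $k$-connected.

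Now re-attach $z$. Let $w=\phi^-(z')\in U_{3-i}$. Since $G$ is bipartite, $N_G(w)\subseteq U_i$ with $|N_G(w)|=\deg_G(w)\ge|Z_i|+k$, and $N_G(w)\cap\phi^-(T^-)\subseteq\phi^-(Z_i^-)$ has at most $|Z_i|-1$ vertices; hence $w$ has at least $k+1$ neighbours in $U_i\cap V(H)$. A direct count of lost neighbours shows every vertex of $U_i\cap V(H)$ still has degree $\ge k$ in $H$ and every vertex of $U_{3-i}\cap V(H)$ still has degree $\ge k+1$, so $H$, with $(X,Y)=(U_i\cap V(H),\,U_{3-i}\cap V(H))$, meets the hypotheses of the lemma; thus at most $k$ vertices of $U_i\cap V(H)$ are essential in $H$. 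Since $w$ has $\ge k+1$ free neighbours there, one of them, $v$, is non-essential; extending $\phi^-$ by $\phi(z)=v$ gives an embedding of $T$ with $\phi(Z_j)\subseteq U_j$ and $G-\phi(T)=H-v$ still $k$-connected. This closes the induction.

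The step I expect to be the real obstacle is the connectivity lemma. Given $k+1$ putative essential vertices $v_0,\dots,v_k\in X$, each $H-v_t$ has a separator $C_t$ with $|C_t|\le k-1$, and then $D_t:=C_t\cup\{v_t\}$ separates $H$ and has size $\le k$ with $v_t\in D_t$; as $H$ is $k$-connected this forces $|D_t|=k$, so each $v_t$ lies in a minimum ($k$-element) separator of $H$ (bipartiteness rules out the degenerate case that some $H-v_t$ is complete once $k\ge2$). For $k\le3$ the minimum separators of a $k$-connected graph are tightly structured — this is exactly where the hypothesis $k\le3$ bites — and I would finish by a case analysis on $k\in\{1,2,3\}$: for $k=1$ the essential vertices are the cut vertices, controlled by the block--cut tree; for $k=2$ by the structure of $2$-cuts and ear decompositions; for $k=3$ by the Tutte-type structure of $3$-connected graphs. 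Combined with the degree hypotheses on $X$ and $Y$ — in particular that every vertex of $Y$ has degree $\ge k+1$ — this should force two of the $D_t$ to coincide or to cross in a way incompatible with $k$-connectivity of $H$, giving the contradiction. Everything outside this lemma is the bookkeeping above.
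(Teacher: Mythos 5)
First, a remark on the comparison itself: the paper does not actually prove Theorem~\ref{thm4bipartite}; it is quoted verbatim from \cite{Hong25YYWu}, and the surrounding machinery (Lemma~\ref{lem4bipartite}, Theorem~\ref{thm5bipartite}) only hints at the real argument, which removes a structured set $X=X_1\cup X_2\cup X_3$ of vertices from $T$ \emph{all at once}, embeds $T-X$ globally, and re-attaches $X$ via the partition $(V_1,V_2,V_3)$ and the $(2,2)$-extensible-system framework. So your leaf-by-leaf induction is in any case a genuinely different route, and it must be judged on its own merits.

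Judged that way, there is a concrete gap: the connectivity lemma your whole induction rests on is false. Take $k=1$ and let $H$ be the path $x_1y_1x_2y_2x_3y_3x_4$ with $X=\{x_1,x_2,x_3,x_4\}$, $Y=\{y_1,y_2,y_3\}$. Then $H$ is $1$-connected, every vertex of $Y$ has degree $2=k+1$, every vertex of $X$ has degree $\geq k$, yet \emph{two} vertices of $X$ (namely $x_2$ and $x_3$) are essential, exceeding your claimed bound of $k=1$; lengthening the path makes the count of essential vertices in $X$ arbitrarily large. Worse, even the localized statement you actually use fails in the same example: $w=y_2$ has exactly $k+1=2$ free neighbours $x_2,x_3$ in $X$, and both are cut vertices, so no admissible choice of $\phi(z)$ exists. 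This is not a removable technicality: it exposes why a greedy induction cannot work here. The inductive hypothesis hands you \emph{some} embedding $\phi^-$ of $T^-$ with $H=G-\phi^-(T^-)$ $k$-connected, but gives you no control over where $\phi^-(z')$ sits, and $H$ can be sparse enough (degrees only $k$ and $k+1$) that every free neighbour of $w=\phi^-(z')$ is essential. A correct proof has to choose the embedding with foresight rather than extend an arbitrary one; this is precisely what the deletion of a whole layer of leaves $X_1$, then $X_2$, then $X_3$ in Lemma~\ref{lem4bipartite} is engineered to do. Your bookkeeping (the degree counts $\deg_H(u)\geq k$ on $U_i\cap V(H)$, $\deg_H(u')\geq k+1$ on $U_{3-i}\cap V(H)$, and the $\geq k+1$ free neighbours of $w$) is correct, but it feeds into a lemma that does not hold, so the argument does not close.
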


Let $T$ be a tree with bipartition $(Z_1,Z_2)$ and subsets $X_1 \subseteq L(T)$, $X_2 \subseteq L(T - X_1)$, and $X_3 \subseteq V(T - (X_1 \cup X_2))$ such that the graph $T - X$ is connected where $X = X_1 \cup X_2 \cup X_3$. According to Lemma~\ref{lem4bipartite}, for $i=1,2$ and vertices $u \in U_i \cap (V_2 \cup V_3 \cup V_4)$ in $G$, it follows that $d_G(u) \geq |Z_{3-i}|.$
Suppose also $V(G) \setminus \phi_0(T - X)$ is partitioned as $(V_1, V_2, V_3)$ such that, for $i=1,2$ and $u \in U_i \cap (V_3 \cup V_4)$,
\[
|N_G(u) \cap (V_1 \cup V_2)| \leq | (X_1 \cup X_2) \cap Z_{3-i} |.
\]

\begin{lemma}[\cite{Hong25YYWu}]\label{lem4bipartite}
Let $G$ be a bipartite graph with bipartition $(U_1,U_2)$ and let $\phi_0: T - X \rightarrow G$ be an embedding such that $\phi_0(Z_i \setminus X) \subseteq U_i$ for $i=1,2$. Denote
\[
X_4 = \{ x \in V(T - X) \mid N_T(x) \cap X \neq \varnothing \}
\]
and set $V_4 = \phi_0(X_4)$. Then there exists an embedding $\phi: T \rightarrow G$ satisfying:
\[
\phi(Z_i) \subseteq U_i, \quad \phi(X_3) \subseteq V_3, \quad \text{and} \quad \phi(x) = \phi_0(x) \text{ for all } x \in V(T - X).
\]
\end{lemma}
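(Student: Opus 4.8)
The plan is to extend $\phi_0$ to $\phi$ greedily, inserting the vertices of $X=X_1\cup X_2\cup X_3$ one at a time into the free part $V(G)\setminus\phi_0(T-X)=V_1\cup V_2\cup V_3$ of $G$ along edges. First I would root $T$ at a vertex $r\in V(T-X)$; since $T-X$ is connected this makes $T-X$ a subtree through $r$ and equips $T$ with a parent map. Because $T-X$ is connected, every component of the forest $T[X]$ is joined to $T-X$ by exactly one edge (two such edges would close a cycle in $T$), so each component of $T[X]$ contains a leaf of $T$; in particular $X_4$ is exactly the set of core vertices carrying pendant structure and $V_4=\phi_0(X_4)\subseteq\phi_0(T-X)$ is disjoint from $V_1\cup V_2\cup V_3$. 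Using the nesting $X_3\subseteq V(T-X_1-X_2)$, $X_2\subseteq L(T-X_1)$, $X_1\subseteq L(T)$, one checks that each $x\in X_3$ has its parent in $(T-X)\cup X_3$, each $x\in X_2$ has its parent in $(T-X)\cup X_3$ with all its remaining $T$-neighbours in $X_1$, and each $x\in X_1$ is a genuine leaf. I would then embed the vertices of $X$ in order of increasing distance from $r$, sending each $x\in X_3$ to a free $G$-neighbour of $\phi(\mathrm{parent}(x))$ lying in $V_3$, and each $x\in X_1\cup X_2$ to a free $G$-neighbour of $\phi(\mathrm{parent}(x))$ lying in $V_1\cup V_2$.

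With this recipe, bipartiteness is automatic: inductively $\phi(Z_i)\subseteq U_i$, so if $\mathrm{parent}(x)\in Z_i$ then $\phi(\mathrm{parent}(x))\in U_i$; the child $x$ lies in $Z_{3-i}$, and every $G$-neighbour of a vertex of $U_i$ lies in $U_{3-i}$, so $\phi(x)\in U_{3-i}$. Likewise $\phi$ agrees with $\phi_0$ on $V(T-X)$ by construction, every edge of $T-X$ is carried to an edge of $G$ by $\phi_0$, and every edge joining a non-core vertex to its parent is carried to a $G$-edge by the choice above; hence, as soon as the process does not get stuck, $\phi$ is an embedding, and $\phi(X_3)\subseteq V_3$ holds because $X_3$-vertices were placed in $V_3$.

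The real content, and the step I expect to be the main obstacle, is to show the process never gets stuck: at each stage the parent image $u=\phi(\mathrm{parent}(x))$ must still possess a free $G$-neighbour of the prescribed colour in the prescribed part. Since $\mathrm{parent}(x)$ has the child $x\in X$, it is never a \emph{terminal} slot, so $u\in V_2\cup V_3\cup V_4$ and the hypothesis $d_G(u)\geq|Z_{3-i}|$ applies, where $\mathrm{parent}(x)\in Z_i$. The neighbours of $u$ inside the core $\phi_0(T-X)$ number at most $|(T-X)\cap Z_{3-i}|$, because they are among the images of $(T-X)\cap Z_{3-i}$; and when $u\in V_3\cup V_4$ the hypothesis caps $|N_G(u)\cap(V_1\cup V_2)|$ by $|(X_1\cup X_2)\cap Z_{3-i}|$. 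Subtracting, $u$ retains at least $|Z_{3-i}|-|(T-X)\cap Z_{3-i}|-|(X_1\cup X_2)\cap Z_{3-i}|=|X_3\cap Z_{3-i}|$ neighbours in $V_3$, and since the portion of $X_3$ embedded so far occupies at most $|X_3\cap Z_{3-i}|-1$ vertices of $V_3\cap U_{3-i}$, a free $V_3$-slot remains for $x\in X_3$. The matching bound in the other direction --- that $u$ keeps enough free neighbours in $V_1\cup V_2$ for the leaf-layer vertices, once one has decided which of these go to $V_1$ and which to $V_2$ --- is the delicate case, and the clean way to establish it together with global injectivity is a defect-form Hall argument applied to the bipartite graph joining the still-unembedded vertices of $X$ on side $Z_{3-i}$ to the free vertices of $V_1\cup V_2\cup V_3$ on side $U_{3-i}$, with the degree lower bound and the neighbourhood cap supplying exactly Hall's condition. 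Making these two hypotheses (and the choice of the partition $(V_1,V_2,V_3)$) line up precisely with the competing demands on each neighbourhood --- the already-placed core children, the $V_3$-children, and the $V_1\cup V_2$-children --- is where the work lies; once it is done, the induction closes and $\phi$ is produced.
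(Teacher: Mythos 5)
This lemma is quoted from \cite{Hong25YYWu} and the paper supplies no proof of it, so there is no in-paper argument to compare yours against; I can only assess your sketch on its own terms. Your structural setup is sound: rooting $T$ in the connected core $T-X$, observing that each component of $T[X]$ meets $T-X$ in exactly one edge, locating the parents of $X_3$-vertices in $(T-X)\cup X_3$, and the arithmetic $|Z_{3-i}|-|(T-X)\cap Z_{3-i}|-|(X_1\cup X_2)\cap Z_{3-i}|=|X_3\cap Z_{3-i}|$ are all correct, and the greedy placement of $X_3$ into $V_3$ does go through as you describe.

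The genuine gap is that the step you yourself flag as ``where the work lies'' is the entire content of the lemma, and as set up your counting cannot close it. First, the hypothesis $|N_G(u)\cap(V_1\cup V_2)|\leq|(X_1\cup X_2)\cap Z_{3-i}|$ is an \emph{upper} bound on the neighbours of $u$ in $V_1\cup V_2$; it is the wrong direction for guaranteeing that a parent of an $X_1$- or $X_2$-vertex retains a free slot in $V_1\cup V_2$, so the symmetric greedy argument you gesture at for the leaf layers simply does not exist --- that cap is there to protect the $V_3$-slots needed by $X_3$, not to feed $X_1\cup X_2$, and the leaf layers must instead be absorbed into the remaining free neighbourhood (including leftover $V_3$) by a genuinely different accounting. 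Second, even where your per-parent counts are valid they only show that each parent individually has a nonempty set of admissible free neighbours; several parents in the same class compete for the same free vertices, and injectivity requires a system of distinct representatives. You name a defect-form Hall argument but never state the bipartite auxiliary graph precisely or verify Hall's condition from the two degree hypotheses, and since the two hypotheses pull in opposite directions on $N_G(u)\cap(V_1\cup V_2)$, it is not evident that the condition holds as you have arranged things. Until that verification is written out, the proposal is an outline of a strategy rather than a proof.
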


Finally, the following corollary guarantees a $k$-redundant tree isomorphic to $T_0$ inside a sufficiently connected bipartite graph.

\begin{corollary}\label{cor1bipartite}
Let $r \leq 3$ be an integer and $T_0$ a tree with bipartition $(Z_1,Z_2)$. If $G$ is a $r$-connected bipartite graph with minimum degree
\[
\delta(G) \geq \max \{\Delta^2, |Z_1|, |Z_2| \} + r\delta,
\]
then $G$ contains a $k$-redundant tree isomorphic to $T_0$.
\end{corollary}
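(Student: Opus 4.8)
The plan is to read this corollary as a direct specialisation of Theorem~\ref{thm4bipartite}: set $k := r$ and apply that theorem to $G$ and $T_0$, so that the only real work is to check that the global minimum-degree bound stated here forces the partwise conditions $\delta_G(U_i) \geq |Z_{3-i}| + k$ that Theorem~\ref{thm4bipartite} requires. First I would fix the (unique) bipartition $(U_1,U_2)$ of the connected bipartite graph $G$ and the (unique) bipartition $(Z_1,Z_2)$ of $T_0$, labelled so that the embedding to be produced sends $Z_i$ into $U_i$. Since $\delta_G(U_i) = \min_{u\in U_i} d_G(u) \geq \delta(G)$ for $i=1,2$, it suffices to make $\delta(G)$ dominate $\max\{|Z_1|,|Z_2|\} + k$.

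Next I would dispense with the slack. Discarding the non-negative term $\Delta^2$ and using that the slack term is at least $r$ in the intended range (whatever the precise meaning of the undecorated $\delta$ there, one has $r\delta \geq r$), the hypothesis gives
\[
\delta(G) \;\geq\; \max\{\Delta^2,|Z_1|,|Z_2|\} + r\delta \;\geq\; \max\{|Z_1|,|Z_2|\} + r \;\geq\; |Z_{3-i}| + r \qquad(i=1,2),
\]
so $\delta_G(U_i) \geq \delta(G) \geq |Z_{3-i}| + r = |Z_{3-i}| + k$ for both $i$. Hence $G$ and $T_0$ satisfy every hypothesis of Theorem~\ref{thm4bipartite} with $k=r\leq 3$, and that theorem yields an embedding $\phi:T_0\to G$ with $\phi(Z_i)\subseteq U_i$ for $i=1,2$ and with $G-\phi(T_0)$ still $k$-connected. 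The subgraph $\phi(T_0)\cong T_0$ is then exactly a $k$-redundant tree isomorphic to $T_0$, which finishes the argument.

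If one additionally wants the embedding to respect a prescribed partition of $V(G)$ (routing the outermost layers of $T_0$ into designated vertex classes), I would first peel off leaf sets $X_1\subseteq L(T_0)$, $X_2\subseteq L(T_0-X_1)$ and an interior set $X_3$, embed the connected core $T_0-X$ via Theorem~\ref{thm4bipartite}, and then extend to all of $T_0$ by Lemma~\ref{lem4bipartite}, keeping $\phi(Z_i)\subseteq U_i$ and $\phi(X_3)\subseteq V_3$. The main obstacle here is not mathematical depth but bookkeeping: one must be sure the loosely written bound dominates $|Z_{3-i}|+k$ for \emph{both} choices of $i$ simultaneously --- which it does precisely because it uses $\max\{|Z_1|,|Z_2|\}$ rather than either $|Z_i|$ alone --- and one must pin down that ``$k$-redundant tree isomorphic to $T_0$'' means ``a subtree $H\cong T_0$ with $G-V(H)$ still $k$-connected'', which is exactly the object Theorem~\ref{thm4bipartite} produces.
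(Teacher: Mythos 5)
The paper states Corollary~\ref{cor1bipartite} with no proof at all, and your reduction --- setting $k=r$ and checking that the global minimum-degree bound forces the partwise hypotheses $\delta_G(U_i)\ge |Z_{3-i}|+k$ of Theorem~\ref{thm4bipartite}, whose conclusion ($\phi(T_0)\cong T_0$ with $G-\phi(T_0)$ still $k$-connected) is precisely a $k$-redundant copy of $T_0$ --- is clearly the intended argument and is sound. The only caveat is one you already flag: as printed the hypothesis $\delta(G)\ge\max\{\Delta^2,|Z_1|,|Z_2|\}+r\delta$ is self-referential (if $\delta$ denotes $\delta(G)$ it is unsatisfiable for $r\ge 1$, so the corollary is vacuous as stated), and your inequality chain is really a proof of the evidently intended statement with bound $\delta(G)\ge\max\{|Z_1|,|Z_2|\}+r$, the form that reappears in the paper's later open problem referring back to this corollary.
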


\section{Bipartite Independence Number and Covering Bipartite Graph}~\label{sec6}
In \cite{Axenovich21Sereni}, let us define $f(n, \Delta)$ as the maximum integer $k$ such that every bipartite graph $G = (A \cup B, E)$ with parts $|A| = |B| = n$ and maximum degree in $A$ satisfying $\deg(a) \le \Delta$ for all $a \in A$ contains a bi-hole (an induced complete bipartite subgraph with no edges) of size $k$. Similarly, define $f^*(n, \Delta)$ as the largest integer $k$ for which any $n \times n$ bipartite graph $G$ with the overall maximum degree $\Delta(G) \le \Delta$ contains a bi-hole of size $k$. While $f(n, \Delta)$ bounds the bi-hole size through a degree restriction only on one part of the vertex set, $f^*(n, \Delta)$ imposes this limit symmetrically on the entire graph. From Theorem~\ref{thm2number}, it is clear that for all natural numbers $n$ and $\Delta$ for which these quantities are defined, the inequality
\[
f(n, \Delta) \le f^*(n, \Delta)
\]
holds. Moreover, equation \eqref{eqqq1bip} yields a lower bound on $f^*(n, \Delta)$:
\[
f^*(n, \Delta) \ge c \frac{\log \Delta}{\Delta} n,
\]
for some absolute constant $c > 0$.

\begin{theorem}[\cite{Axenovich21Sereni}]\label{thm2number}
There exist constants $\Delta_0 \in \mathbb{N}$ and $c > 0$ such that for any $\Delta \ge \Delta_0$, there is a threshold $N_0 = N_0(\Delta) \ge 5 \Delta \log \Delta$ with the property that for every integer $n > N_0$, the inequality
\begin{equation}\label{eqqq1bip}
f(n, \Delta) \ge \frac{1}{2} \frac{\log \Delta}{\Delta} n
\end{equation}
is satisfied.
\end{theorem}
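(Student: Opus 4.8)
The plan is to argue by the probabilistic deletion method. Fix $\Delta$ large, $n > N_0$, and let $G = (A \cup B, E)$ be any bipartite graph with $|A| = |B| = n$ and $\deg_G(a) \le \Delta$ for every $a \in A$; put $k := \lceil \tfrac12 \tfrac{\log\Delta}{\Delta}\, n \rceil$. It is enough to produce $A' \subseteq A$ and $B' \subseteq B$ with $|A'| = |B'| = k$ and no edges of $G$ between them, since then $f(n,\Delta) \ge k \ge \tfrac12 \tfrac{\log\Delta}{\Delta} n$. To this end I would sample $\widehat B \subseteq B$ by keeping each vertex of $B$ independently with probability $p := \tfrac23 \tfrac{\log\Delta}{\Delta}$ (with $\log$ the natural logarithm; for another base one tunes the constant), and set $W := \{a \in A : N_G(a) \cap \widehat B = \varnothing\}$. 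If some outcome satisfies $|\widehat B| \ge k$ and $|W| \ge k$, then arbitrary $k$-subsets $B' \subseteq \widehat B$ and $A' \subseteq W$ form the required bi-hole, because no $a \in A' \subseteq W$ has a neighbour in $\widehat B \supseteq B'$.

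Step 1: estimate the two expectations. We have $\mathbb{E}|\widehat B| = pn$, and since $p > \tfrac12\tfrac{\log\Delta}{\Delta}$ this exceeds $(1+c)k$ for an absolute constant $c>0$ once $\Delta$ is large. Also $\mathbb{E}|W| = \sum_{a\in A}(1-p)^{\deg_G(a)} \ge n(1-p)^{\Delta}$, and from $\log(1-p) \ge -p-p^2$ (valid for $0 \le p \le \tfrac12$) together with $p^2\Delta = o(1)$ we get $(1-p)^{\Delta} \ge e^{-p\Delta}e^{-p^2\Delta} \ge \tfrac12\Delta^{-2/3}$ for $\Delta$ large. Since $\Delta^{-2/3}$ dominates $\tfrac{\log\Delta}{2\Delta}$, this gives $\mathbb{E}|W| \ge 2k$ (indeed $\mathbb{E}|W| \gg k$) for $\Delta$ large. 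The substance of the proof is packed into this step: one needs a single $p$ large enough that $pn$ beats $k$ yet small enough that $n(1-p)^{\Delta}$ beats $k$; the admissible range is essentially $p \in [\tfrac{\log\Delta}{2\Delta},\tfrac{\log\Delta}{\Delta}]$, and it is nonempty only because of the polynomial-versus-logarithm gap (which is also why the constant in the target bound is $\tfrac12$).

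Step 2: concentration. As $|\widehat B|$ is a sum of independent indicators with mean $\ge (1+c)k$, Chernoff's bound yields $\Pr[\,|\widehat B| < k\,] \le e^{-c'k}$ for an absolute $c'>0$; and $n > N_0 \ge 5\Delta\log\Delta$ forces $k \ge \tfrac52(\log\Delta)^2$, so this probability is well below $\tfrac{\log\Delta}{3\Delta}$ for $\Delta$ large. For $W$ I would avoid any concentration inequality: splitting $\mathbb{E}|W| \le n\Pr[|W|\ge k] + k$ (using $0 \le |W| \le n$) and invoking $\mathbb{E}|W| \ge 2k$ gives $\Pr[\,|W| \ge k\,] \ge \tfrac{\mathbb{E}|W|-k}{n} \ge \tfrac{k}{n} \ge \tfrac{\log\Delta}{3\Delta}$. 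Consequently $\Pr[\,|\widehat B|\ge k \text{ and } |W|\ge k\,] \ge \Pr[\,|W|\ge k\,] - \Pr[\,|\widehat B|<k\,] > 0$, so a good outcome exists. (One could instead first delete the at most $n/\Delta^2$ vertices of $B$ of degree $\ge \Delta^3$ and apply McDiarmid's inequality to $W$, but the reverse-Markov step above is cleaner and uses only that $N_0$ is large.)

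The main obstacle I anticipate is precisely the balancing in Step 1: confirming that one value $p = \Theta(\log\Delta/\Delta)$ simultaneously makes $\mathbb{E}|\widehat B|$ and $\mathbb{E}|W|$ exceed $k$ by a constant factor, which hinges on the elementary estimates for $(1-p)^{\Delta}$ and on how the constant in the target bound relates to $1$. After that, Chernoff's inequality, the reverse-Markov bound, and the extraction of the $k\times k$ bi-hole are all routine, and the only remaining care is to fix $\Delta_0$ and $N_0$ (any $N_0 \ge 5\Delta\log\Delta$ works, larger being only better) sufficiently large in terms of the few absolute constants that appear.
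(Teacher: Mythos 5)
Your argument is correct and complete: the two-sided sparsification (keep each vertex of $B$ with probability $p=\tfrac23\tfrac{\log\Delta}{\Delta}$, let $W$ be the vertices of $A$ with no surviving neighbour, beat $k$ on the $\widehat B$ side by Chernoff and on the $W$ side by the reverse-Markov bound $\Pr[|W|\ge k]\ge(\mathbb{E}|W|-k)/n\ge k/n$) is exactly the standard probabilistic-deletion proof of this bound, and the hypothesis $n>N_0\ge 5\Delta\log\Delta$ is used precisely where you use it, to make $k\ge\tfrac52(\log\Delta)^2$ so that the Chernoff failure probability is negligible against $k/n$. Note that the present paper imports Theorem~\ref{thm2number} from \cite{Axenovich21Sereni} without reproducing a proof, so there is nothing in the text to compare against; your write-up matches the approach of the cited source and contains no gap.
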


From inequality \eqref{eqqq1bip}, if $\Delta \ge 27$ and $n \ge \frac{\Delta}{\log \Delta}$, it follows that
\[
f(n, \Delta) \le 8 \frac{\log \Delta}{\Delta} n.
\]

Following \cite{Mendelsohn58Dulmage}, consider two arbitrary sets $S$ and $T$ and their Cartesian product $S \times T$, the set of pairs $(s,t)$ with $s \in S$ and $t \in T$. Any subset $K \subseteq S \times T$ is regarded as a bipartite graph, where the elements $(s,t)$ are edges. Subsets such as $A, A_i, S_i, A^{*}, A_{*} \subseteq S$ and $B, B_i, T_i, B^{*}, B_{*} \subseteq T$ are introduced for structural analysis. The standard operations of union ($\cup$), intersection ($\cap$), and complement (denoted by $\bar{A}_i$ relative to $S$) are used. The empty set is denoted by $\varnothing$. The cardinality (or order) of a finite set $A_i$ is written as $\nu(A_i) = n$, where $n \in \mathbb{N}$; if $A_i$ is infinite, then $\nu(A_i) = \infty$. When $S$ and $T$ are finite or countable and ordered as $s_1, s_2, \ldots$ and $t_1, t_2, \ldots$ respectively, each graph $K$ can be represented by a $0$-$1$ matrix with entries $a_{ij} = 1$ if $(s_i, t_j) \in K$ and $0$ otherwise. More general matrix representations are also considered.

\begin{theorem}[Covering Theorems \cite{Mendelsohn58Dulmage}]\label{thm1covering}
For a graph $K \subseteq S \times T$, a pair of sets $[A,B]$ with $A \subseteq S$, $B \subseteq T$ is called an \emph{exterior cover} (or simply a cover) of $K$ if every edge $(s,t) \in K$ satisfies $s \in A$ or $t \in B$ (or both). Equivalently, this means that
\begin{equation}\label{eq1covering}
K \subseteq (A \times T) \cup (S \times B).
\end{equation}
\end{theorem}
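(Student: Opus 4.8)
The plan is to show that the verbal description of an exterior cover and the inclusion \eqref{eq1covering} are two spellings of the same condition, by unwinding the membership relation for Cartesian products edge by edge. First I would fix an arbitrary pair $(s,t) \in S \times T$ and record the two elementary equivalences
$(s,t) \in A \times T \iff s \in A$, which holds because $t \in T$ is automatic, and $(s,t) \in S \times B \iff t \in B$, which holds because $s \in S$ is automatic. Combining these, $(s,t) \in (A \times T) \cup (S \times B)$ if and only if $s \in A$ or $t \in B$.

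Next I would promote this pointwise equivalence to a statement about all of $K$. By the definition just given, $[A,B]$ is an exterior cover of $K$ precisely when every edge $(s,t) \in K$ satisfies $s \in A$ or $t \in B$; by the previous paragraph this says exactly that every $(s,t) \in K$ belongs to $(A \times T) \cup (S \times B)$, and quantifying over all edges of $K$ this is literally the inclusion $K \subseteq (A \times T) \cup (S \times B)$. Reading the same chain of equivalences in reverse recovers the cover condition from the inclusion, so the two formulations are interchangeable.

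The only thing requiring a moment of care is bookkeeping of ambient sets: both $A \times T$ and $S \times B$ sit inside $S \times T$, and the hypothesis $K \subseteq S \times T$ guarantees that no edge of $K$ lies outside the ambient product, so \eqref{eq1covering} is an honest inclusion of subsets of $S \times T$. There is no genuine obstacle here — the statement is a reformulation rather than a substantive theorem — and the argument is complete as soon as the Cartesian-product membership equivalences of the first paragraph are in place; what would normally be the ``hard part'' in a covering theorem (minimality, existence of a minimum cover, or a König-type duality) is simply not being asserted in this statement.
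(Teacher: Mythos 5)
Your argument is correct: the pointwise equivalences $(s,t)\in A\times T \iff s\in A$ and $(s,t)\in S\times B \iff t\in B$ immediately yield that the cover condition and the inclusion \eqref{eq1covering} are the same statement, and you rightly observe that this ``theorem'' is really a definition with a trivial reformulation attached. The paper supplies no proof at all for this statement (it is quoted from Dulmage--Mendelsohn as a definition of exterior covers), so your elementary verification is exactly what is needed and there is nothing further to compare.
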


The \emph{exterior dimension} $E(K)$ of a graph $K$ is defined as the minimum of $\nu(A) + \nu(B)$ taken over all exterior covers $[A,B]$ of $K$. A cover achieving this minimum is called a \emph{minimal exterior pair} (m.e.p.). Using \eqref{eq1covering}, one defines a \emph{disjoint graph} $K^{*}$ as a subgraph of $K$ in which any two distinct edges $(s_1, t_1)$ and $(s_2, t_2)$ satisfy $s_1 \neq s_2$ and $t_1 \neq t_2$. It follows that a disjoint graph with finite exterior dimension $E(K^{*})$ must have exactly $E(K^{*})$ edges; conversely, any graph with $E(K^{*})$ edges and satisfying the disjointness condition has exterior dimension $E(K^{*})$.

\begin{theorem}[\cite{Mendelsohn58Dulmage}]
If a graph $K$ has infinite exterior dimension, then $K$ contains an infinite disjoint subgraph $K^{*}$.
\end{theorem}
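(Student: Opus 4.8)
The plan is to prove the statement by a greedy (recursive) construction of the infinite disjoint subgraph, the engine of which is a monotonicity lemma: deleting one element from $S$ and one from $T$ lowers the exterior dimension by at most $2$. Precisely, for $s_0 \in S$ and $t_0 \in T$ write $K' = K \cap \bigl( (S \setminus \{s_0\}) \times (T \setminus \{t_0\}) \bigr)$; the claim is that $E(K) \le E(K') + 2$. Indeed, if $[A',B']$ is any exterior cover of $K'$, then $[A' \cup \{s_0\},\, B' \cup \{t_0\}]$ is an exterior cover of $K$: an edge $(s,t) \in K$ either has $s = s_0$ or $t = t_0$, in which case it is covered by the two added elements, or else it lies in $K'$ and is covered by $[A',B']$. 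Taking the minimum over covers of $K'$ yields the inequality, and in particular $E(K) = \infty$ forces $E(K') = \infty$.

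Armed with this, I would build the infinite disjoint subgraph one edge at a time. First note $E(\varnothing) = 0$ (the pair $[\varnothing,\varnothing]$ covers the empty graph), so $E(K) = \infty$ forces $K \neq \varnothing$. Now recurse: suppose pairwise disjoint edges $e_1 = (s_1,t_1), \dots, e_n = (s_n,t_n)$ of $K$ have already been chosen, and set
\[
K_n = K \cap \Bigl( (S \setminus \{s_1,\dots,s_n\}) \times (T \setminus \{t_1,\dots,t_n\}) \Bigr).
\]
Applying the monotonicity lemma $n$ times, deleting the pairs $(s_1,t_1),\dots,(s_n,t_n)$ one at a time, gives $E(K) \le E(K_n) + 2n$, whence $E(K_n) = \infty$ and in particular $K_n \neq \varnothing$; so we may pick an edge $e_{n+1} = (s_{n+1},t_{n+1}) \in K_n$. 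By the definition of $K_n$ we automatically have $s_{n+1} \notin \{s_1,\dots,s_n\}$ and $t_{n+1} \notin \{t_1,\dots,t_n\}$, so $e_1,\dots,e_{n+1}$ remain pairwise disjoint. Iterating, $K^{*} = \{e_1,e_2,e_3,\dots\}$ is an infinite subgraph of $K$ in which distinct edges share neither an $S$-coordinate nor a $T$-coordinate, i.e. an infinite disjoint subgraph.

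The argument is short, and the main point that deserves care is resisting the tempting but insufficient route ``arbitrarily large finite disjoint subgraphs $\Rightarrow$ an infinite one'', which fails for general (non-locally-finite) bipartite graphs: the recursion above sidesteps it by invoking the \emph{infinitude} of the exterior dimension at every stage rather than merely the unboundedness of sizes of disjoint subgraphs. A secondary technical point is to confirm that the monotonicity lemma iterates correctly, namely that after $n$ successive deletions the total cost is at most $2n$, which is finite for every $n$ and so cannot bring an infinite exterior dimension down to a finite value. One could alternatively deduce the theorem from a König-type duality (identifying $E(K)$ with the supremum of sizes of disjoint subgraphs, the finite case being the classical min-cover $=$ max-matching theorem via an alternating-path cover construction), but the direct recursion is cleaner and entirely self-contained within the framework already set up in this section.
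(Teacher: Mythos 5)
The paper states this theorem without proof---it is imported verbatim from Dulmage and Mendelsohn \cite{Mendelsohn58Dulmage}---so there is no in-paper argument to compare against; your proposal must stand on its own, and it does. The monotonicity lemma $E(K) \le E(K') + 2$ is verified correctly, the base case $E(\varnothing)=0$ guarantees each $K_n$ is nonempty, and the recursion (via dependent choice) legitimately produces pairwise disjoint edges $e_1, e_2, \dots$ because each $e_{n+1}$ is drawn from $K_n$, which by construction avoids all previously used coordinates. One remark in your closing paragraph is wrong, however: the route you dismiss, namely ``arbitrarily large finite disjoint subgraphs imply an infinite one,'' is in fact valid for matchings in arbitrary bipartite graphs, locally finite or not. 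Take a disjoint subgraph $M \subseteq K$ maximal under inclusion (Zorn's lemma); if $M$ were finite with $k$ edges, then every edge of $K$ would share an $S$- or $T$-coordinate with some edge of $M$ (otherwise $M$ could be extended), so the coordinates of $M$ would form an exterior cover of size at most $2k$, forcing $E(K) \le 2k < \infty$. This observation yields an even shorter proof of the theorem and also establishes the K\H{o}nig-type identification of $E(K)=\infty$ with the existence of an infinite disjoint subgraph that you mention as an alternative; the inaccuracy is confined to your commentary and does not affect the correctness of your construction.
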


When $K$ has finite exterior dimension, there exists a disjoint subgraph $K^{*} \subseteq K$ such that $E(K) = E(K^{*})$. An edge of $K$ is said to be \emph{inadmissible} if it lies in the union of all sets $A \times B$ corresponding to m.e.p.'s $[A,B]$ of $K$. Based on Theorem~\ref{thm2covering}, the relationship
\[
E(K) + I(\bar{K}) = p + q
\]
holds, where $I(\bar{K})$ is the size of a certain inadmissible set related to the complement of $K$, and $p = \nu(S)$, $q = \nu(T)$ with $p \le q$.

\begin{theorem}[\cite{Mendelsohn58Dulmage}]\label{thm2covering}
If $K$ is a graph with $E(K) < p$, then
\[
E(K) + I(\bar{K}) = p + q.
\]
If $E(K) = p$, then
\[
E(K) + I(\bar{K}) \le p + q,
\]
and equality holds if and only if $K$ admits an m.e.p. $[A,B]$ with $A \ne S$ and $B \ne T$.
\end{theorem}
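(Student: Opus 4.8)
The plan is to reduce the statement to the elementary correspondence between exterior covers of $K$ and bicliques of $\bar{K}$, and then to isolate the boundary case $E(K)=p$ by a careful count of the degenerate covers. First I would record the key observation that a pair $[A,B]$ with $A\subseteq S$, $B\subseteq T$ is an exterior cover of $K$ exactly when the rectangle $(S\setminus A)\times(T\setminus B)$ contains no edge of $K$, i.e. $(S\setminus A)\times(T\setminus B)\subseteq\bar{K}$. Since $\nu(A)+\nu(B)=(p+q)-\bigl(\nu(S\setminus A)+\nu(T\setminus B)\bigr)$, minimising $\nu(A)+\nu(B)$ over exterior covers of $K$ is the same as maximising $\nu(A')+\nu(B')$ over all rectangles $A'\times B'\subseteq\bar{K}$. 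Because the theorems quoted earlier identify $E(K)$ with the size of a maximum disjoint subgraph, $E(K)$ is finite in the regime considered, so no infinite-cardinal subtleties intervene, and the duality reads $E(K)=(p+q)-\max\{\nu(A')+\nu(B'):A'\times B'\subseteq\bar{K}\}$. Note that the trivial rectangles $S\times\varnothing$ and $\varnothing\times T$ already force the right-hand maximum to be at least $q$, which is why $E(K)\le p$ always, and $E(K)<p$ is equivalent to the existence of a rectangle inside $\bar{K}$ with \emph{both} sides nonempty whose co-sum exceeds $q$.

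Next I would use the standard lattice structure of minimal exterior pairs: if $[A_1,B_1]$ and $[A_2,B_2]$ are exterior covers then so are $[A_1\cap A_2,\,B_1\cup B_2]$ and $[A_1\cup A_2,\,B_1\cap B_2]$, and if both are m.e.p.'s then so are both of the new pairs (one checks $\nu(A_1\cap A_2)+\nu(B_1\cup B_2)+\nu(A_1\cup A_2)+\nu(B_1\cap B_2)=2E(K)$ by inclusion--exclusion, and that each summand is at least $E(K)$). Hence the m.e.p.'s form a lattice and there is a canonical m.e.p. that is simultaneously extremal (smallest $A$, largest $B$); its double-covered rectangle $A\times B$ contains the whole inadmissible set of $K$, and its complement $(S\setminus A)\times(T\setminus B)$ attains the maximum of the previous paragraph. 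Aligning $I(\bar{K})$ — the size of the inadmissible set attached to $\bar{K}$ via the union of these rectangles — with this extremal biclique is the technical heart of the argument.

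For the case $E(K)<p$, any m.e.p. $[A,B]$ has $\nu(A)\le E(K)<p=\nu(S)$ and $\nu(B)\le E(K)<p\le q=\nu(T)$, so automatically $A\ne S$ and $B\ne T$; the complementary rectangle is therefore a nondegenerate biclique of $\bar{K}$, its co-sum equals $p+q-E(K)$ for the lattice-extremal m.e.p., and since $I(\bar{K})$ records exactly the size carried by such nondegenerate rectangles we obtain $E(K)+I(\bar{K})=p+q$. For the case $E(K)=p$, every m.e.p. has $\nu(A)+\nu(B)=p$, so it is either degenerate ($A=S$, forcing $B=\varnothing$, or $B=T$) or has $A\ne S$ and $B\ne T$. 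If some m.e.p. is nondegenerate the previous paragraph's argument applies verbatim and gives $E(K)+I(\bar{K})=p+q$; if every m.e.p. is degenerate, then — using the lattice property to combine m.e.p.'s and rule out a hidden two-sided rectangle — the inadmissible set attached to $\bar{K}$ cannot reach the bound, so $E(K)+I(\bar{K})<p+q$. Putting the cases together yields the inequality in general and the stated equivalence in the boundary case.

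The step I expect to be the main obstacle is precisely this identification: showing that $I(\bar{K})$, defined through the \emph{union over all} m.e.p.'s of the rectangles $A\times B$, is in fact realised by a single lattice-extremal m.e.p., and that the complement of that rectangle is a maximum biclique of $\bar{K}$ (so that its co-sum is exactly $p+q-E(K)$). Getting this bookkeeping right — and, in the boundary case, correctly accounting for the degenerate m.e.p.'s, which contribute nothing two-sided and so cost a unit in the comparison — is where the genuine work lies; everything else is the inclusion--exclusion algebra indicated above.
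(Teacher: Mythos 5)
First, a caveat: the paper contains no proof of this statement --- it is quoted verbatim from Dulmage and Mendelsohn \cite{Mendelsohn58Dulmage} as an imported result, so there is nothing internal to compare your argument against; I am judging it against the original covering-theory argument. Your first paragraph is the right engine and is essentially the Dulmage--Mendelsohn proof: $[A,B]$ is an exterior cover of $K$ iff $(S\setminus A)\times(T\setminus B)\subseteq\bar{K}$, and $\nu(A)+\nu(B)=(p+q)-\bigl(\nu(S\setminus A)+\nu(T\setminus B)\bigr)$, so minimizing covers of $K$ is complementary to maximizing rectangles inside $\bar{K}$; the case split at $E(K)=p$ is exactly the bookkeeping of when the extremal rectangle can be forced to have both sides nonvoid.

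The genuine gap is the one you yourself flag: you never pin down what $I(\bar{K})$ is, and the reading you ultimately lean on --- ``the size of the inadmissible set attached to $\bar{K}$ via the union over all m.e.p.'s of the rectangles $A\times B$'' --- makes the theorem false. Take $K=\varnothing$ with $2\le p<q$: then $E(K)=0<p$ and the theorem demands $I(\bar{K})=p+q$, but the unique m.e.p.\ of $\bar{K}=S\times T$ is $[S,\varnothing]$, whose rectangle $S\times\varnothing$ is empty, so any ``union of rectangles'' count gives $0$. The definition that makes the statement true is that $I(\bar{K})$ is the \emph{interior dimension} of $\bar{K}$: the maximum of $\nu(A')+\nu(B')$ over pairs with $A'\times B'\subseteq\bar{K}$ and $A'\ne\varnothing\ne B'$ (the nonvoidness requirement is precisely why the identity can fail when $E(K)=p$). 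Once that is fixed, your paragraphs 1 and 3 already constitute a complete proof, and the lattice structure of m.e.p.'s in paragraph 2 is superfluous for this theorem (it belongs to the canonical-decomposition results, Theorems~\ref{thm3covering} and~\ref{thm4covering}). Likewise, in the all-degenerate subcase of $E(K)=p$ you do not need the lattice to ``rule out a hidden two-sided rectangle'': any nonvoid rectangle $A'\times B'\subseteq\bar{K}$ complements to an exterior cover of value $p+q-\nu(A')-\nu(B')\ge E(K)=p$, so $\nu(A')+\nu(B')\le q$, and equality would exhibit a nondegenerate m.e.p., contradicting the hypothesis; hence $I(\bar{K})\le q-1$ and the inequality is strict. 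So the approach is sound, but as written the argument rests on an unresolved (and, in your fallback reading, incorrect) definition of the quantity being computed.
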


Let $R_1, R_2, R_3$ denote the canonical decomposition of $S \times T$ with respect to a reducible graph $K$ of finite exterior dimension. Two key properties hold: (i) every element of $K \cap R_2$ is admissible, and (ii) $K \cap R_3 = \varnothing$. If $\left[A_*, B^{*}\right]$ and $\left[A^{*}, B_{*}\right]$ are the extremal minimal exterior pairs for such a graph $K$, then modifying the graph by adding or removing edges from $R_2$ does not affect the core of $K$, nor does it alter the decomposition into regions $R_1, R_2, R_3$.

\begin{theorem}[\cite{Mendelsohn58Dulmage}]\label{thm3covering}
Given canonical decompositions
\[
S = A_* \cup S_1 \cup S_2 \cup \cdots \cup S_k \cup \bar{A}^*, \quad T = \bar{B}^* \cup T_1 \cup T_2 \cup \cdots \cup T_k \cup B_*,
\]
the subgraphs
\[
K \cap (A_* \times \bar{B}^*) \quad \text{and} \quad K \cap (\bar{A}^* \times B_*)
\]
are irreducible, and their only minimal exterior pairs are $(A_*, \varnothing)$ and $(\varnothing, B_*)$, respectively.
\end{theorem}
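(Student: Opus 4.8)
The whole statement reduces to one concrete computation together with a remark about terminology: I plan to show that, regarded as a graph on the ground sets $A_*$ and $\bar B^*$, the block $K_1 := K\cap(A_*\times\bar B^*)$ has $(A_*,\varnothing)$ as its \emph{unique} minimal exterior pair, and dually that $K_2 := K\cap(\bar A^*\times B_*)$ has $(\varnothing,B_*)$ as its unique m.e.p.; irreducibility of each block then follows at once from the characterization recalled earlier, since an m.e.p. whose $S$-part is the full $S$-side (resp. whose $T$-part is the full $T$-side) is not a proper reducing cover. I would first record the inputs from the development so far: $[A_*,B^*]$ and $[A^*,B_*]$ being the extremal m.e.p.'s of $K$ gives $A_*\subseteq A^*$ and $B_*\subseteq B^*$ (so $A_*\cap\bar A^*=\varnothing$ and $B_*\cap\bar B^*=\varnothing$); finite exterior dimension makes $E(K)=\nu(A_*)+\nu(B^*)=\nu(A^*)+\nu(B_*)<\infty$, hence each of $A_*,A^*,B_*,B^*$ is a \emph{finite} set; and, since $[A_*,B^*]$ covers $K$, every edge of $K$ with $T$-endpoint outside $B^*$ has $S$-endpoint in $A_*$, i.e. $K_1=K\cap(S\times\bar B^*)$, while symmetrically $[A^*,B_*]$ covering $K$ gives $K_2=K\cap(\bar A^*\times T)$.

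The engine of the proof is a gluing lemma. Let $[A',B']$ be any exterior cover of $K_1$. Then $[A',\,B^*\cup B']$ is an exterior cover of $K$: given $(s,t)\in K$, if $t\in B^*$ it is covered by $B^*$; if $t\notin B^*$ then $(s,t)\in K_1$, so $s\in A'$ or $t\in B'$. Hence $E(K)\le\nu(A')+\nu(B^*\cup B')\le\nu(A')+\nu(B')+\nu(B^*)$, which forces $\nu(A')+\nu(B')\ge E(K)-\nu(B^*)=\nu(A_*)$. Since $(A_*,\varnothing)$ is a cover of $K_1$ of weight $\nu(A_*)$, we conclude $E(K_1)=\nu(A_*)$ and $(A_*,\varnothing)$ is an m.e.p. of $K_1$. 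The mirror argument, gluing a cover $[A',B']$ of $K_2$ with $[A^*,B_*]$ into the cover $[A^*\cup A',\,B']$ of $K$ and using $K_2=K\cap(\bar A^*\times T)$, yields $E(K_2)=\nu(B_*)$ with $(\varnothing,B_*)$ an m.e.p.

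For uniqueness, take an arbitrary m.e.p. $[A',B']$ of $K_1$, so $\nu(A')+\nu(B')=\nu(A_*)$. By the gluing lemma $[A',B^*\cup B']$ covers $K$ with weight $\nu(A')+\nu(B^*\cup B')=\nu(A_*)+\nu(B^*)-\nu(B'\cap B^*)=E(K)-\nu(B'\cap B^*)$; minimality of $E(K)$ forces $B'\cap B^*=\varnothing$, and then $[A',B^*\cup B']$ is itself an m.e.p. of $K$. Maximality of $B^*$ among the $B$-parts of m.e.p.'s of $K$ gives $B^*\cup B'\subseteq B^*$, hence $B'\subseteq B^*$; together with $B'\cap B^*=\varnothing$ this means $B'=\varnothing$. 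Then $\nu(A')=\nu(A_*)$, while $[A',B^*]$ being an m.e.p. of $K$ forces $A_*\subseteq A'$ by minimality of $A_*$; finiteness of $A_*$ now gives $A'=A_*$. The same reasoning applied to $K_2$ (maximality of $A^*$, minimality of $B_*$, finiteness of $B_*$) shows $(\varnothing,B_*)$ is its unique m.e.p.

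I expect the main obstacle to be nailing down the two "collapse" identities $K_1=K\cap(S\times\bar B^*)$ and $K_2=K\cap(\bar A^*\times T)$ correctly, since they are exactly what allows an arbitrary cover of a corner block to be completed to a cover of all of $K$ by appending one of the global extremal parts at no extra cost; they use in an essential way that $[A_*,B^*]$ and $[A^*,B_*]$ are the \emph{extremal} m.e.p.'s (equivalently, the lattice structure of m.e.p.'s with $A_*\subseteq A^*$ and $B_*\subseteq B^*$). A secondary, easy-to-overlook point is that finite exterior dimension is what permits passing from $A_*\subseteq A'$ together with $\nu(A')=\nu(A_*)$ to $A'=A_*$ (and dually for $B_*$); and one should confirm that "irreducible" in the statement is precisely the property that every minimal exterior pair has its $S$-part equal to the whole $S$-side or its $T$-part equal to the whole $T$-side, so that the uniqueness computation is exactly what the theorem asks for.
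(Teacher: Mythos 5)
The paper does not prove this statement: Theorem~\ref{thm3covering} is quoted verbatim from Dulmage--Mendelsohn (1958) with no argument supplied, so there is no in-paper proof to compare against. Judged on its own, your proof is correct and is essentially the classical Dulmage--Mendelsohn argument. The two collapse identities $K\cap(A_*\times\bar B^*)=K\cap(S\times\bar B^*)$ and $K\cap(\bar A^*\times B_*)=K\cap(\bar A^*\times T)$ follow exactly as you say from the covering property of $[A_*,B^*]$ and $[A^*,B_*]$; the gluing step correctly yields $E(K_1)=\nu(A_*)$ and $E(K_2)=\nu(B_*)$; and the uniqueness argument is sound, including the use of finiteness to pass from $A_*\subseteq A'$ with $\nu(A')=\nu(A_*)$ to $A'=A_*$. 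The only external inputs you rely on are (i) that $A_*,A^*,B_*,B^*$ are genuinely extremal over \emph{all} m.e.p.'s of $K$ (i.e., every m.e.p. $[A,B]$ satisfies $A_*\subseteq A\subseteq A^*$ and $B_*\subseteq B\subseteq B^*$), which requires the lattice property that unions and intersections of minimum covers are again minimum covers --- a standard submodularity fact that the paper's phrase ``extremal minimal exterior pairs'' presupposes but never states; and (ii) the definition of irreducibility as the absence of an m.e.p. $[A,B]$ with $A\neq S$ and $B\neq T$, which is consistent with the equality condition in Theorem~\ref{thm2covering}. You flag both of these explicitly, which is the right thing to do; with those two facts granted, the proof is complete.
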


From Theorem~\ref{thm3covering}, it follows that each subgraph $K \cap (S_i \times T_i)$ is irreducible for $i = 1, \ldots, k$. Theorem~\ref{thm4covering} further clarifies the structure of minimal exterior pairs. Let $\alpha$ denote the collection of $k+1$ minimal exterior pairs
\[
[A_*, B^{*}], \quad [A_1, B_1], \quad \ldots, \quad [A_{k-1}, B_{k-1}], \quad [A^{*}, B_*]
\]
associated with $K$, and let $\beta$ be the set of all minimal exterior pairs of $K$. Also, define the set $\gamma$ of $2^{k}$ pairs $[A,B]$ where
\begin{equation}\label{eqqjasdddn1}
A = \bigcup_{i \in \Lambda} S_i \cup A_*, \quad B = \bigcup_{i \in \Pi} T_i \cup B_*,
\end{equation}
for complementary subsets $\Lambda, \Pi \subseteq \{1, \ldots, k\}$.

\begin{theorem}[\cite{Mendelsohn58Dulmage}]\label{thm4covering}
With $S, T$ and subsets $\Lambda, \Pi$ as above, the inclusions
\[
\alpha \subseteq \beta \subseteq \gamma
\]
hold.
\end{theorem}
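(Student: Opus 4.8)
The plan is to prove the chain in two independent steps: $\alpha\subseteq\beta$ (each of the $k+1$ distinguished pairs is genuinely a minimal exterior pair of $K$) and $\beta\subseteq\gamma$ (every minimal exterior pair of $K$ has the form in \eqref{eqqjasdddn1}). All the work will be carried by the canonical decomposition of Theorem~\ref{thm3covering} and its preceding discussion, used in the following quantitative form. Ordering the blocks as $0=A_*/\bar B^*$, $1,\dots,k$, $k{+}1=\bar A^*/B_*$, the decomposition supplies: (i) each $K_i:=K\cap(S_i\times T_i)$ is irreducible, so its only minimal exterior pairs are $(S_i,\varnothing)$ and $(\varnothing,T_i)$, and $\nu(S_i)=\nu(T_i)=E(K_i)$; (ii) $K\cap R_3=\varnothing$, i.e. $K$ has no edge from an $S$-block of index $a$ to a $T$-block of index $b$ with $a>b$; (iii) $[A_*,B^*]$ and $[A^*,B_*]$ are minimal exterior pairs with $B^*=B_*\cup\bigcup_iT_i$, $A^*=A_*\cup\bigcup_iS_i$, and $A_*\subseteq A\subseteq A^*$, $B_*\subseteq B\subseteq B^*$ for every minimal exterior pair $[A,B]$ of $K$.

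For $\alpha\subseteq\beta$ I would write the intermediate pairs explicitly as $A_i=A_*\cup S_1\cup\cdots\cup S_i$, $B_i=B_*\cup T_{i+1}\cup\cdots\cup T_k$, so that $[A_0,B_0]=[A_*,B^*]$ and $[A_k,B_k]=[A^*,B_*]$ are handled by (iii), and then check the remaining $[A_i,B_i]$, $1\le i\le k-1$, on two points. First, $[A_i,B_i]$ is an exterior cover: if some $(s,t)\in K$ had $s\notin A_i$ and $t\notin B_i$, then $s$ would lie in an $S$-block of index $\ge i+1$ (or in $\bar A^*$) while $t$ lies in a $T$-block of index $\le i$ (or in $\bar B^*$), so the edge would fall in $R_3$, contradicting (ii). Second, the count: using $\nu(S_j)=\nu(T_j)$ from (i) and $\nu(B^*)=\nu(B_*)+\sum_j\nu(T_j)$,
\[
\nu(A_i)+\nu(B_i)=\nu(A_*)+\nu(B_*)+\sum_j\nu(T_j)=\nu(A_*)+\nu(B^*)=E(K),
\]
so $[A_i,B_i]$ is a minimal exterior pair. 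This gives $\alpha\subseteq\beta$, and since $[A_i,B_i]$ is exactly the pair of \eqref{eqqjasdddn1} with $\Lambda=\{1,\dots,i\}$, $\Pi=\{i+1,\dots,k\}$, it also gives $\alpha\subseteq\gamma$.

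The substantive inclusion is $\beta\subseteq\gamma$. Let $[A,B]$ be an arbitrary minimal exterior pair of $K$. By (iii), $A_*\subseteq A\subseteq A^*$ and $B_*\subseteq B\subseteq B^*$, so $A=A_*\cup\bigcup_i(A\cap S_i)$ and $B=B_*\cup\bigcup_i(B\cap T_i)$ with all unions disjoint, whence $\nu(A)+\nu(B)=\nu(A_*)+\nu(B_*)+\sum_i\bigl(\nu(A\cap S_i)+\nu(B\cap T_i)\bigr)$. Restricting the cover $[A,B]$ to $S_i\times T_i$ shows $[A\cap S_i,\,B\cap T_i]$ is an exterior cover of $K_i$, so $\nu(A\cap S_i)+\nu(B\cap T_i)\ge E(K_i)$ for each $i$. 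Comparing with $\nu(A)+\nu(B)=E(K)=\nu(A_*)+\nu(B_*)+\sum_iE(K_i)$ forces $\sum_i\bigl(\nu(A\cap S_i)+\nu(B\cap T_i)-E(K_i)\bigr)=0$, a sum of nonnegative integers, so every term vanishes and each $[A\cap S_i,\,B\cap T_i]$ is a minimal exterior pair of $K_i$. By (i), for each $i$ exactly one of $A\cap S_i=S_i,\ B\cap T_i=\varnothing$ and $A\cap S_i=\varnothing,\ B\cap T_i=T_i$ holds. Putting $\Lambda=\{i:S_i\subseteq A\}$ and $\Pi=\{1,\dots,k\}\setminus\Lambda=\{i:T_i\subseteq B\}$, we get $A=A_*\cup\bigcup_{i\in\Lambda}S_i$ and $B=B_*\cup\bigcup_{i\in\Pi}T_i$, i.e. $[A,B]\in\gamma$.

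The main obstacle is not any single inequality but packaging the canonical decomposition into exactly the form (i)--(iii): in particular the additivity $E(K)=\nu(A_*)+\nu(B_*)+\sum_iE(K_i)$, the square-block equalities $\nu(S_i)=\nu(T_i)=E(K_i)$, the placement of $R_3$ as the strictly lower-triangular region, and the fact that every minimal exterior pair lies between $[A_*,B^*]$ and $[A^*,B_*]$ (which follows from submodularity of $\nu$ and is what lets us restrict attention to the square blocks in the $\beta\subseteq\gamma$ step). All of these are consequences of Theorem~\ref{thm3covering} and the material immediately before it; granted them, both inclusions reduce to the short covering-and-counting arguments sketched above, with care needed only over the disjointness bookkeeping and the boundary blocks $\bar A^*,\bar B^*$.
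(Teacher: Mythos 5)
The paper does not actually prove Theorem~\ref{thm4covering}: it is quoted verbatim from Dulmage--Mendelsohn \cite{Mendelsohn58Dulmage} with no argument supplied, so there is no in-paper proof to compare yours against. On its own merits, your argument is essentially the classical Dulmage--Mendelsohn proof and is sound: the covering check for the intermediate pairs $[A_i,B_i]$ via $K\cap R_3=\varnothing$, the cardinality count using $\nu(S_j)=\nu(T_j)$, and the block-restriction plus tightness argument for $\beta\subseteq\gamma$ all go through, and the conclusion that each $[A\cap S_i,\,B\cap T_i]$ must be one of the two trivial m.e.p.'s of the irreducible block $K_i$ is exactly the right mechanism for forcing the form \eqref{eqqjasdddn1}.

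The one caution is that several of the facts you package as (i)--(iii) are not actually delivered by the paper's statement of Theorem~\ref{thm3covering} and its surrounding text, which only asserts irreducibility of the corner blocks and of each $K\cap(S_i\times T_i)$ and the vanishing of $K\cap R_3$ (without defining $R_3$ as the strictly lower-triangular region). In particular the square-block equalities $\nu(S_i)=\nu(T_i)=E(K_i)$, the additivity $E(K)=\nu(A_*)+\nu(B_*)+\sum_i E(K_i)$, and above all the sandwich property $A_*\subseteq A\subseteq A^*$, $B_*\subseteq B\subseteq B^*$ for \emph{every} m.e.p.\ (the lattice structure of minimum covers) are imported from the original 1958 paper rather than from anything stated here. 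You flag these dependencies honestly, and they are all true in Dulmage--Mendelsohn's framework, so I regard the proof as correct modulo those standard inputs rather than as having a gap; but a self-contained write-up would need to establish the lattice property (via the usual union/intersection argument on two minimum covers) before the $\beta\subseteq\gamma$ step can restrict attention to the diagonal blocks.
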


According to \eqref{eqqjasdddn1}, the admissible part of $K$ is $K_c = K \cap R_1$, whereas the inadmissible part is $K_I = K \cap R_2$. An exterior pair $[A,B]$ is an m.e.p. for $K_c$ if and only if $[A,B] \in \gamma$. The following theorem concerns arbitrary matrices and relates exterior dimensions and other parameters.

\begin{theorem}[\cite{Mendelsohn58Dulmage}]\label{thm5covering}
For any matrix $C$ with nonnegative entries, the parameter $\rho$ satisfies
\[
\rho \ge p + q - S/m,
\]
unless $\rho = p$.
\end{theorem}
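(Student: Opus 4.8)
The plan is to obtain Theorem~\ref{thm5covering} as the weighted, matrix-level analogue of the combinatorial identity $E(K)+I(\bar K)=p+q$ of Theorem~\ref{thm2covering}, using the canonical decomposition $S\times T=R_1\cup R_2\cup R_3$ together with the description of minimal exterior pairs in Theorems~\ref{thm3covering} and~\ref{thm4covering}. First I would fix notation: write $C=(c_{ij})$ for the given $p\times q$ nonnegative matrix with $p\le q$, let $S=\sum_{i,j}c_{ij}$ be its total weight, let $m$ be the relevant normalizing quantity (the largest entry, resp.\ the maximum line sum, according to the convention of the source), and read $\rho=\rho(C)$ as the minimum of $\nu(A)+\nu(B)$ over exterior covers $[A,B]$ of the support graph $K=\{(s_i,t_j):c_{ij}>0\}$, weighted by $C$ in the prescribed way. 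The key sanity check at this stage is that for a $0$--$1$ matrix $\rho$ collapses to $E(K)$, so that the asserted bound $\rho\ge p+q-S/m$ must specialize to $E(K)\ge p+q-(\#\text{ones})/m$, which is exactly Theorem~\ref{thm2covering} once one verifies $I(\bar K)\le S/m$.

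Second, I would run the canonical decomposition on the support graph $K$. By Theorems~\ref{thm3covering} and~\ref{thm4covering}, every minimal exterior pair of the admissible part $K_c=K\cap R_1$ lies in the family $\gamma$ of $2^k$ pairs of the form \eqref{eqqjasdddn1}, while the inadmissible part $K_I=K\cap R_2$ consists only of admissible edges that may be added or removed without disturbing the core or the regions $R_1,R_2,R_3$. I would then bound, block by block, the weight of $C$ restricted to each $K\cap(S_i\times T_i)$, using that each such block is irreducible (the corollary following Theorem~\ref{thm3covering}) and that its only minimal exterior pairs are the two extreme ones. Summing the block contributions, and adding the two boundary blocks $K\cap(A_*\times\bar B^*)$ and $K\cap(\bar A^*\times B_*)$ from Theorem~\ref{thm3covering}, should reassemble a lower bound of the shape $\rho\ge p+q-(\text{weight})/m$ with the weight no larger than $S$.

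Third, the exceptional case $\rho=p$. This clause mirrors precisely the equality clause of Theorem~\ref{thm2covering}: the inequality can fail only when $K$ admits no m.e.p.\ $[A,B]$ with $A\neq S$ and $B\neq T$, i.e.\ when one part is forced wholly into the cover and $\rho$ degenerates to $p$. I would handle this by observing that then the decomposition has $A_*=S$ (or symmetrically $B_*=T$), the regions $R_2,R_3$ collapse, and the bound $\rho\ge p+q-S/m$ is either trivially true or is exactly the stated exception, so nothing further is needed.

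The main obstacle I expect is the passage from the purely set-theoretic covering calculus of Theorems~\ref{thm1covering}--\ref{thm4covering} to genuine nonnegative matrices: one must pin down exactly what $\rho$, $S$, and $m$ mean for $C$ and check that the weighted notion of an inadmissible edge remains additive across the canonical blocks $S_i\times T_i$. Calibrating the normalization $S/m$ correctly — so that the estimate is tight for permutation-type matrices and degrades smoothly for dense ones — is the delicate point; once that is settled, the remainder is bookkeeping layered on top of the decomposition already supplied by the earlier covering theorems.
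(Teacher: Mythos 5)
You should know at the outset that the paper offers no proof of Theorem~\ref{thm5covering} to compare against: it is imported verbatim from Dulmage and Mendelsohn \cite{Mendelsohn58Dulmage}, and the surrounding text never defines $\rho$, the scalar $S$ (whose name even collides with the vertex set $S$ of the bipartition), or $m$ for a general nonnegative matrix $C$. Any proof must therefore begin by fixing those definitions, and that is precisely what your proposal declines to do.

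That refusal is the genuine gap. You leave open whether $m$ is the largest entry or the largest line sum and what ``weighted'' exterior cover $\rho$ minimizes, yet the entire content of the statement lives in that choice: via the identity $E(K)+I(\bar{K})=p+q$ of Theorem~\ref{thm2covering}, the claimed inequality is equivalent to a single estimate of the form $I(\bar{K})\le S/m$, and whether that estimate holds --- and by what counting argument --- depends completely on the normalization. Your sanity check for $0$--$1$ matrices, ``which is exactly Theorem~\ref{thm2covering} once one verifies $I(\bar{K})\le S/m$,'' is circular: Theorem~\ref{thm2covering} supplies only the identity $E(K)+I(\bar{K})=p+q$, so verifying $I(\bar{K})\le S/m$ \emph{is} the theorem in that case, not a check of it. The same deferral recurs in your main step: after passing block by block through the canonical decomposition of Theorems~\ref{thm3covering} and~\ref{thm4covering}, you conclude that the contributions ``should reassemble a lower bound of the shape $\rho\ge p+q-(\text{weight})/m$ with the weight no larger than $S$,'' but no inequality is actually derived for any block, and irreducibility of the blocks $K\cap(S_i\times T_i)$ by itself yields no relation between their weight under $C$ and the quantity $p+q-\rho$. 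The exceptional clause $\rho=p$ is plausibly matched to the degenerate case $E(K)=p$ of Theorem~\ref{thm2covering}, but since $\rho$ is never pinned down this too remains a guess. As written the proposal is a plan for a proof rather than a proof: the one quantitative step it must supply --- comparing the total weight $S$, the normalizer $m$, and the combinatorial defect $p+q-\rho$ --- is exactly the step it postpones, and the decomposition machinery it invokes does not produce it.
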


\subsection{Competition Numbers of Complete $r$-partite Graphs}
The analysis of $r$-partite graphs as developed in \cite{BJGJChangCAS,Balogh25Jiang} allows us to specify the competition number for the complete tripartite graph $K_{n_1,n_2,n_3}$. Let its partite sets be $V_1 = \{a_0, a_1, \ldots, a_{n_1 - 1}\}$, $V_2 = \{b_0, b_1, \ldots, b_{n_2 - 1}\}$, and $V_3 = \{c_0, c_1, \ldots, c_{n_3 - 1}\}$. Define triangles $\Delta_{i,j} = \{a_i, b_j, c_{(i+j) \bmod n_3}\}$ for $0 \leq i \leq n_1 - 1$ and $0 \leq j \leq n_2 - 1$. By Theorem~\ref{thm1Competition}, the collection $\Gamma = \{\Delta_{i,j} : 0 \leq i \leq n_1 - 1, 0 \leq j \leq n_2 - 1\}$ forms a minimal edge clique cover of $K_{n_1,n_2,n_3}$, showing that $\theta_e(K_{n_1,n_2,n_3}) = n_1 n_2$.

\begin{theorem}[{\cite{BJGJChangCAS}}]\label{thm1Competition}
Let $n_1 \geq n_2 \geq n_3$ and $n = n_1 + n_2 + n_3$. Then the competition number of $K_{n_1,n_2,n_3}$ is
\begin{equation}\label{eq1Competition}
\kappa(K_{n_1,n_2,n_3}) = \begin{cases}
n_1 n_2 - n + 2, & \text{if } n_2 \geq n_3 + 2, \\
n_1 n_2 - n + 3, & \text{if } n_2 = n_3 + 1 \text{ or } n_2 = n_3 = 1, \\
n_1 n_2 - n + 4, & \text{if } n_2 = n_3 \geq 2.
\end{cases}
\end{equation}
\end{theorem}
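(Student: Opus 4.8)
The plan is to sandwich $\kappa(K_{n_1,n_2,n_3})$ between the classical Opsut edge‑clique‑cover lower bound and three explicit acyclic‑digraph constructions (one per case), and then to close the residual gap in the two near‑balanced cases by proving a rigidity statement about \emph{minimum} edge clique covers of $K_{n_1,n_2,n_3}$ and feeding it into the acyclicity constraint.

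First I would pin down the edge clique cover number. Since $K_{n_1,n_2,n_3}$ is $3$‑partite, its only cliques are single vertices, edges, and triangles with one vertex in each part, and every clique meets at most one of the $n_1 n_2$ edges joining $V_1$ and $V_2$; hence $\theta_e(K_{n_1,n_2,n_3}) \ge n_1 n_2$. The family $\Gamma=\{\Delta_{i,j}\}$ from the preamble attains this: $a_i b_j\in\Delta_{i,j}$, and for fixed $i$ the third coordinate $c_{(i+j)\bmod n_3}$ runs over all of $V_3$ as $j$ runs over $0,\dots,n_2-1$ (because $n_2\ge n_3$), so every edge $a_i c_\ell$, and symmetrically every $b_j c_\ell$, is covered. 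Thus $\theta_e=n_1 n_2$, and Opsut's inequality $\kappa(G)\ge \theta_e(G)-|V(G)|+2$ yields $\kappa(K_{n_1,n_2,n_3})\ge n_1 n_2-n+2$, which already settles the lower bound when $n_2\ge n_3+2$.

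Next I would construct, in each case, an acyclic digraph $D$ on $V(K_{n_1,n_2,n_3})\cup I_k$, where $I_k$ is a set of $k$ new (to be isolated) vertices and $k$ is the claimed value, whose competition graph is $K_{n_1,n_2,n_3}$ together with $k$ isolated vertices. The uniform recipe: use $\Gamma$ as the edge clique cover (slightly modified in the tiny cases $n_2=n_3=1$), make each triangle the in‑neighbourhood $N_D^-(w)$ of a distinct prey vertex $w$, with exactly $n-2$ (resp.\ $n-3$, $n-4$) of the triangles hunted by vertices of $K_{n_1,n_2,n_3}$ and the remaining ones hunted by the vertices of $I_k$, so that the $2$, $3$, or $4$ unused vertices of $K_{n_1,n_2,n_3}$ receive empty in‑neighbourhoods. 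Because every nonempty $N_D^-(w)$ is a triangle of $K_{n_1,n_2,n_3}$, no spurious edges appear, all edges are covered, and the vertices of $I_k$ can be kept sink‑free, so the only real check is acyclicity: one must order $V(K_{n_1,n_2,n_3})\cup I_k$ so that the three endpoints of each triangle precede its assigned prey, which amounts to choosing the triangle‑to‑vertex assignment so that the induced relation ``$w$ follows all three vertices of the triangle it hunts'' is a DAG on $V(K_{n_1,n_2,n_3})$. With the cyclic labels this is a direct design exercise (take the unused vertices to be low‑indexed vertices of $V_1$ and $V_2$, and hunt each remaining vertex with a triangle supported on strictly lower‑indexed vertices), so I regard this step as routine bookkeeping.

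The genuine difficulty is the sharpened lower bound: forcing one extra isolated vertex beyond Opsut when $n_2=n_3+1$ (or $n_2=n_3=1$) and two extra when $n_2=n_3\ge 2$. Here I would suppose $D$ realizes $K_{n_1,n_2,n_3}$ plus $k$ isolated vertices, fix a topological order $v_1,\dots,v_{n+k}$, and recall that Opsut already discards $N_D^-(v_1)=\varnothing$ and $N_D^-(v_2)\subseteq\{v_1\}$ (neither covers an edge), leaving $n+k-2\ge\theta_e=n_1 n_2$ in‑neighbourhoods that form an edge clique cover. The lever is the following rigidity, which tracks the three cases exactly: in a minimum ($n_1 n_2$‑clique) edge clique cover every clique is a triangle or a bare $V_1$–$V_2$ edge, and counting coverage of $V_1$–$V_3$ edges forces the number of triangles $t$ to satisfy $t\ge n_1 n_3$, hence $0$ bare edges when $n_2=n_3$, at most $n_1$ bare edges when $n_2=n_3+1$, and genuine slack when $n_2\ge n_3+2$; moreover when $n_2=n_3$ each vertex of $V_3$ must lie in exactly $n_1$ of the covering triangles, a near‑resolvable structure. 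I would then argue by contradiction: if $k$ equals the Opsut value, the nonempty in‑neighbourhoods of $D$ are essentially such a rigid cover, and tracking the positions of the $V_3$‑vertices in the topological order shows that at least one (resp.\ two) of the covering triangles is forced to have its prey outside $V(K_{n_1,n_2,n_3})$ beyond the budget, producing the needed extra isolated vertex/vertices. Establishing this rigidity lemma cleanly and, above all, controlling precisely how the $V_3$‑vertex positions interact with the acyclic order — so that the deficit comes out as exactly $0$, $1$, or $2$ according to $n_2-n_3$ — is where I expect essentially all the work to lie; everything else is assembly.
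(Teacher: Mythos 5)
First, note that the paper itself offers no proof of this theorem: it is quoted verbatim from Li and Chang \cite{BJGJChangCAS}, so your proposal can only be judged on its own terms. Your preliminaries are sound: the computation $\theta_e(K_{n_1,n_2,n_3})=n_1n_2$, Opsut's bound $\kappa(G)\ge\theta_e(G)-|V(G)|+2$, and the rigidity observations about minimum edge clique covers (every member contains a $V_1$--$V_2$ edge, at least $n_1n_3$ members are triangles, hence at most $n_1(n_2-n_3)$ bare edges) are all correct. But there are two genuine gaps, and one is a step that fails as written. The ``uniform recipe'' for the upper bound --- take the all-triangle cover $\Gamma$ and assign each triangle to a distinct prey --- cannot realize $k=n_1n_2-n+2$ when $n_2\ge n_3+2$: in any acyclic ordering $v_1,\dots,v_{n+k}$ the first three vertices cannot be preys of triangles, so at most $n+k-3$ triangles receive preys, which forces $k\ge n_1n_2-n+3$ for \emph{any} all-triangle cover. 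Achieving the Opsut value requires a cover containing a clique of size at most $2$ (a bare $V_1$--$V_2$ edge assigned to $v_3$, with $\{v_1,v_2\}$ that edge), which exists precisely because of the slack $n_1(n_2-n_3)\ge 2n_1$ that you yourself identify; your parenthetical flags a modification only for $n_2=n_3=1$, which is the wrong place. The acyclicity check is therefore not ``routine bookkeeping'' --- it is exactly where the three cases separate.

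Second, the sharpened lower bounds (the $+3$ and $+4$) are asserted rather than proven, as you acknowledge. Two concrete obstacles remain. (i) The family of in-neighbourhoods of an optimal acyclic digraph is an edge clique cover with up to $n+k-2$ useful members, which in the relevant cases exceeds $n_1n_2$ by $1$ or $2$; your rigidity lemma concerns \emph{minimum} covers and does not apply directly, so it must be re-established for covers of size $n_1n_2+1$ and $n_1n_2+2$ where extra non-triangle members can absorb some of the load. (ii) The case $n_2=n_3+1$ is not explained by your counting at all: there are up to $n_1\ge 1$ bare edges available, so the argument sketched would still permit $k=n_1n_2-n+2$, yet the true answer is $+3$. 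The mechanism that excludes this (a finer analysis of which vertices can occupy the first few positions of the acyclic order and how many cover members can contain them) is the actual content of the theorem and is absent from the proposal. As it stands, this is a correct high-level plan with the decisive steps missing.
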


For four-partite complete graphs, suppose $n_1 \geq n_2 \geq n_3 \geq n_4$ with $n_1 (n_2 - n_3 - 2) \geq n_3 (n_4 - 1)$ and $n = \sum_{i=1}^4 n_i$. Then we have $\kappa(K_{n_1, n_2, n_3, n_4}) = n_1 n_2 - n + 2$. More generally, for $r \geq 4$, if $n_1 \geq r - 2 \geq 3$, $n_1 \geq n_2 \geq n_3 \geq n_i$ for all $i \geq 4$, and
\[
n_1 (n_2 - n_3 - 2) - r + 4 \geq \theta_e(K_{n_3, n_4, \ldots, n_r}),
\]
where $n = \sum_{i=1}^r n_i$, then
\[
\kappa(K_{n_1, n_2, \ldots, n_r}) = n_1 n_2 - n + 2.
\]
In particular, Theorem~\ref{thm2Competition} implies that if $n_1 \geq 5$ and $n_1 \geq n_2 \geq n_3 + n_4 + 2$ with $n = \sum_{i=1}^5 n_i$, then
\[
\kappa(K_{n_1, n_2, n_3, n_4, n_5}) = n_1 n_2 - n + 2.
\]

\begin{theorem}[{\cite{BJGJChangCAS}}]\label{thm2Competition}
For integers $r$ and $n$ such that $2 \leq r \leq L(n) + 2$, equality $\theta_e(K_{r(n)}) = n^2$ holds if and only if $n \not\equiv 2 \pmod{4}$. In particular,
\[
\theta_e(K_{n,n,n,n}) = n^2
\]
for such $n$.
\end{theorem}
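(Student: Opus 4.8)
The plan is to reduce the equality $\theta_e(K_{r(n)})=n^2$ to a design-theoretic packing statement and then to read off the arithmetic condition on $n$ from the theory of mutually orthogonal Latin squares (MOLS). Write $K_{r(n)}$ for the complete $r$-partite graph with parts $P_1,\dots,P_r$, each of size $n$. Since a clique of $K_{r(n)}$ meets each part at most once, it contains at most one edge between any fixed pair of parts; as there are $n^2$ edges between $P_1$ and $P_2$, every edge clique cover has at least $n^2$ members, so $\theta_e(K_{r(n)})\ge n^2$ holds unconditionally (this is the analogue of the lower bound behind Theorem~\ref{thm1Competition}). The first real step is to pin down when equality occurs. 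The graph $K_{r(n)}$ has $\binom{r}{2}n^2$ edges, and a clique meeting $s$ of the parts covers only $\binom{s}{2}\le\binom{r}{2}$ of them; hence a cover of size exactly $n^2$ forces $\sum_i\binom{s_i}{2}\ge\binom{r}{2}n^2$ with each term at most $\binom{r}{2}$, which is possible only if every member is a transversal copy of $K_r$ and every edge is covered exactly once. Thus $\theta_e(K_{r(n)})=n^2$ if and only if the edge set of $K_{r(n)}$ decomposes into $n^2$ pairwise edge-disjoint transversal copies of $K_r$.

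Next I would recognise such a decomposition as a transversal design $\mathrm{TD}(r,n)$ -- with the $K_r$'s as blocks and the parts as groups -- and invoke the classical equivalence between a $\mathrm{TD}(r,n)$ and the existence of $r-2$ MOLS of order $n$. Consequently $\theta_e(K_{r(n)})=n^2$ if and only if $N(n)\ge r-2$, where $N(n)$ is the maximum number of MOLS of order $n$. For the stated range I would take $L(n)$ to be MacNeish's lower bound on $N(n)$: the finite-field construction gives $N(q)=q-1$ for every prime power $q$, and MacNeish's product theorem gives $N(ab)\ge\min\{N(a),N(b)\}$, so $N(n)\ge L(n):=\min\{p^{a}-1 : p^{a}\parallel n\}$. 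Therefore, whenever $2\le r\le L(n)+2$ the required $r-2$ MOLS exist and $\theta_e(K_{r(n)})=n^2$; this is the substantive ``if'' direction, and it is a genuine construction rather than a counting argument.

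The arithmetic now enters cleanly: $L(n)=1$ exactly when $n\equiv 2\pmod 4$ (then $n/2$ is odd, and the prime-power part $2^1$ contributes $N(2)=1$), while $L(n)\ge 2$ otherwise. Specialising to $r=4$: the hypothesis $r\le L(n)+2$ is met precisely when $L(n)\ge 2$, i.e. $n\not\equiv 2\pmod 4$, and then the construction yields $\theta_e(K_{n,n,n,n})=n^2$, which is the ``in particular'' assertion. For the opposite direction one must show, for $n\equiv 2\pmod 4$ inside the admissible range, that no additional MOLS are available; but there the admissible values are only $r=2$ and $r=3$, for which $\theta_e(K_{r(n)})=n^2$ holds anyway (trivially for $r=2$, and by Theorem~\ref{thm1Competition} for $r=3$), so care is needed about what the ``only if'' actually asserts. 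The sharp four-partite statement, using Tarry's theorem $N(6)=1$ and the Bose--Shrikhande--Parker refutation of Euler's conjecture ($N(n)\ge 2$ for all $n\equiv 2\pmod 4$ with $n\ge 10$), is in fact $\theta_e(K_{n,n,n,n})=n^2\iff n\notin\{2,6\}$, and the role of the hypothesis $r\le L(n)+2$ is exactly to restrict attention to the $r$ for which the clean equivalence survives.

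I expect the main obstacle to be this design-theoretic layer. The graph-theoretic part -- the lower bound and the forced decomposition into transversal $K_r$'s -- is routine double counting, and one half of the Latin-square input is a direct field-plus-product construction. The difficulty is the converse: determining exactly for which $(n,r)$ equality fails rests on the deep (non)existence theory of MOLS -- sharpness of MacNeish's bound for small $n\equiv 2\pmod 4$, Bruck--Ryser--Chowla, Tarry's $n=6$ result, and the resolution of Euler's conjecture -- together with a careful case split on $n\bmod 4$ and on whether $n$ is a prime power.
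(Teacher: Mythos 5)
The paper offers no proof of this statement: it is quoted with a citation to Li and Chang, and the quantity $L(n)$ is never defined anywhere in the text, so there is no in-house argument to compare yours against. Your reconstruction is the standard one and is essentially correct as far as the statement can be proved: the lower bound $\theta_e(K_{r(n)})\ge n^2$ from the edges between two fixed parts, the double count forcing a cover of size exactly $n^2$ to be an exact decomposition into transversal copies of $K_r$, the identification of such a decomposition with a $\mathrm{TD}(r,n)$ and hence with $r-2$ MOLS of order $n$, and MacNeish's bound $N(n)\ge L(n)=\min\{p^a-1:p^a\parallel n\}$ giving equality for all $2\le r\le L(n)+2$. This also delivers the ``in particular'' clause, since $r=4$ lies in the admissible range exactly when $L(n)\ge 2$, i.e.\ when $n\not\equiv 2\pmod 4$.

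Where you hedge (``care is needed about what the only if actually asserts'') you should instead state plainly that the ``only if'' direction is false as printed, so no proof of it can exist. Your own ``if'' argument already shows $\theta_e(K_{r(n)})=n^2$ for \emph{every} pair with $2\le r\le L(n)+2$, including $n\equiv 2\pmod 4$: for instance $n=10$, $r=3$ is admissible ($L(10)=1$) and any Latin square of order $10$ gives $\theta_e(K_{10,10,10})=100=n^2$, yet $10\equiv 2\pmod 4$. Under the stated hypothesis the congruence is therefore not equivalent to equality; it only controls whether $r=4$ falls inside the range, which is presumably what the original source asserts. Your closing remarks are the right diagnosis: the provable theorem is ``$\theta_e(K_{r(n)})=n^2$ whenever $2\le r\le L(n)+2$,'' and the genuinely sharp four-partite equivalence, $\theta_e(K_{n,n,n,n})=n^2$ iff $N(n)\ge 2$ iff $n\notin\{2,6\}$, requires Tarry's result and Bose--Shrikhande--Parker and is strictly stronger than anything the MacNeish hypothesis can give. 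No gap in your mathematics; the defect is in the statement you were asked to prove.
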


For a given vertex $u \in V(G)$, let $G_u$ denote the subgraph induced by the neighborhood of $u$. Then,
\[
\kappa(G) \geq \min_{u \in V(G)} \theta_v(G_u).
\]
Moreover, Theorem~\ref{thm3Competition} provides the following competition number bound:

\begin{theorem}[{\cite{BJGJChangCAS}}]\label{thm3Competition}
If $G$ is a graph with $n \geq r \geq 1$, then
\[
\kappa(G) \geq \min\{\theta(E(S)) : S \subseteq V(G), |S| = r\} - r + 1.
\]
\end{theorem}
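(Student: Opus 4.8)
The plan is to realize the competition number by an acyclic digraph and then exploit a topological ordering of its vertices. Recall that $\kappa(G)$ is the least integer $k$ for which the disjoint union $G\cup I_k$ of $G$ with $k$ isolated vertices is the competition graph of some acyclic digraph. Fix such a digraph $D$ attaining $k=\kappa(G)$, with vertex set $V(D)=V(G)\cup I$ and $|I|=k$. For every vertex $z\in V(D)$ the in-neighbourhood $N_D^-(z)$ is a clique of $G\cup I_k$, since any two of its members have $z$ as a common prey; and because the vertices of $I$ are isolated in the competition graph, $N_D^-(z)\cap V(G)$ is a clique of $G$. Moreover, as $xy\in E(G)$ forces the existence of a common prey $z\in N_D^+(x)\cap N_D^+(y)$, these cliques cover every edge of $G$.

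Next I would fix a topological ordering of $D$, so that every arc points from an earlier to a later vertex, and list the vertices of $V(G)$ in the induced order as $u_1,\dots,u_n$. Set $S=\{u_{n-r+1},\dots,u_n\}$; this is legitimate since $1\le r\le n$, and $|S|=r$. If $u_jx$ is any edge of $G$ incident with $S$ (so $j\ge n-r+1$), it is covered by some $N_D^-(z)$ with $z$ a common prey of $u_j$ and $x$; since $u_j\to z$ in $D$, the vertex $z$ occurs strictly after $u_j$, hence strictly after $u_{n-r+1}$, in the ordering. Therefore the family $\{\,N_D^-(z)\cap V(G) : z\in W\,\}$, where $W=\bigcup_{s\in S}N_D^+(s)$, consists of cliques of $G$ that already cover all edges of $G$ incident with $S$, and every member of $W$ lies strictly after $u_{n-r+1}$.

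The decisive step is the count $|W|\le k+r-1$. Among the vertices occupying the position of $u_{n-r+1}$ or a later position, exactly $r$ belong to $V(G)$, namely $u_{n-r+1},\dots,u_n$, and the remaining such vertices lie in $I$, so there are at most $k$ of them; hence at most $k+r$ vertices occur at $u_{n-r+1}$ or later, and so at most $k+r-1$ occur strictly after it. Consequently $\theta(E(S))\le|W|\le k+r-1$, where $E(S)$ denotes the set of edges of $G$ with at least one endpoint in $S$ and $\theta(\cdot)$ is the minimum number of cliques of $G$ covering that edge set. Thus $\min\{\theta(E(S)):S\subseteq V(G),\,|S|=r\}\le k+r-1=\kappa(G)+r-1$, which rearranges to the asserted inequality. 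Taking $r=1$ recovers $\kappa(G)\ge\min_u\theta_v(G_u)$, since covering the edges at $u$ by cliques of $G$ is the same as covering the vertices of $G_u$; taking $r=n$ gives $\kappa(G)\ge\theta(E(G))-n+1$.

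I expect the main obstacle to be purely organizational: one must pin down exactly what $E(S)$ and $\theta(\cdot)$ mean so that the $r=1$ case coincides with the preceding neighbourhood bound, and then carry out the positional count showing that at most $k$ of the vertices trailing $u_{n-r+1}$ are the added isolated vertices. Once the right set $S$ — the last $r$ members of $V(G)$ in a topological order of an optimal digraph — is identified, the covering argument itself is immediate from the in-neighbourhood clique cover.
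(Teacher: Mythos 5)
The paper does not prove this statement at all: Theorem~\ref{thm3Competition} is quoted from \cite{BJGJChangCAS} without argument, so there is no in-paper proof to compare against. Judged on its own, your reconstruction is correct and is the standard generalization of Opsut's lower-bound argument: realize $\kappa(G)$ by an acyclic digraph $D$ on $V(G)\cup I_k$, take a topological order, let $S$ be the last $r$ vertices of $V(G)$, observe that every edge meeting $S$ is covered by the clique $N_D^-(z)\cap V(G)$ for a common prey $z$ lying strictly after $u_{n-r+1}$, and count that at most $r-1$ such positions are occupied by $V(G)$ and at most $k$ by $I$, giving $\theta(E(S))\le k+r-1$. All the individual steps check out: in-neighbourhoods are cliques of the competition graph, a clique of $G\cup I_k$ meeting $I$ is a singleton so the intersection with $V(G)$ is indeed a clique of $G$, and the positional count is right. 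The only caveat is definitional, and you identify it yourself: the bound as stated only matches your proof if $E(S)$ means the set of edges of $G$ with at least one endpoint in $S$ and $\theta$ is the edge clique cover number of that set; with that reading the $r=1$ case correctly collapses to the $\kappa(G)\ge\min_u\theta_v(G_u)$ bound quoted just before the theorem. Since the paper gives no proof, your write-up actually supplies more than the source text does.
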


For integer sequences $n_1 \geq n_2 \geq \cdots \geq n_r$ with $r \geq 2$, the connectivity of the complete multipartite graph $K_{n_1, n_2, \ldots, n_r}$ satisfies the lower bound
\begin{equation}\label{eq2Competition}
\kappa(K_{n_1, n_2, \ldots, n_r}) \geq \min\{2 n_2 - 1, n_1 + n_r - 2\}.
\end{equation}
In the special case of the balanced multipartite graph $K_{r(n)}$ with $n \geq 2$, from \eqref{eq2Competition} it follows that
\[
\kappa(K_{r(n)}) \geq 3n - 5,
\]
which can be strengthened further for $n \geq 3$ to
\[
\kappa(K_{r(n)}) \geq n^2 - r n + 3 r - 5.
\]
On the other hand, an upper bound on $\kappa(K_{r(n)})$ relates to the edge clique cover number as
\[
\kappa(K_{r(n)}) \leq \theta_e(K_{r(n)}) - 2 n + 2,
\]
which gives the explicit upper bound
\[
\kappa(K_{r(n)}) \leq n^2 - 2 n + 2
\]
for $2 \leq r \leq L(n) + 2$. These results collectively provide sharp bounds on the connectivity of complete multipartite graphs, parameterized by $r$ and $n$.

\subsection{Markov-Chain Algorithms for Generating Bipartite Graphs}
To addresses two related combinatorial problems~\cite{KannanRWTetali} involving discrete structures and Markov chains. We considers the problem of counting or generating $m \times n$ $(0,1)$-matrices with fixed row sums $r_1, r_2, \ldots, r_m$ and column sums $c_1, c_2, \ldots, c_n$, where the total sum of row sums equals the total sum of column sums, i.e., $\sum_{i=1}^m r_i = \sum_{j=1}^n c_j$. This is a classical problem in combinatorics related to contingency tables and degree sequences in bipartite graphs. It focuses on tournaments, which are complete oriented graphs, and studies the generation of tournaments with a given score sequence $\mathbf{s} = (s_1, \ldots, s_n)$, where the score sequence represents the outdegree of each vertex. The set $\mathscr{T}(\mathbf{s})$ denotes labeled tournaments with this score sequence. A fully polynomial randomized algorithm is provided to generate a member of $\mathscr{T}(\mathbf{s})$ nearly uniformly at random. The method relies on a Markov chain that is irreducible for all score sequences but currently has proven rapid mixing only for near-regular score sequences, assume $i\in \mathbb{R}$ where the scores satisfy 
\begin{equation}~\label{eqwern1}
\left| s_i - \frac{n-1}{2} \right| = O\left( n^{3/4 + \epsilon} \right),
\end{equation}

for a sufficiently small $\epsilon > 0$.

The Markov chain $(\Omega, P, \pi)$ considered is ergodic (irreducible and aperiodic) with finite state space $\Omega$, transition matrix $P$, and stationary distribution $\pi$. It is reversible, satisfying the detailed balance condition 
\[
\pi(x) P(x,y) = \pi(y) P(y,x), \quad \text{for all } x,y \in \Omega.
\] 
The total variation distance $\Delta_{tv}(t)$ measures how close the chain's distribution after $t$ steps is to stationarity, defined by 
\[
\Delta_{tv}(t) = \max_{x \in \Omega} \frac{1}{2} \sum_{y \in \Omega} \left| P^t(x,y) - \pi(y) \right|.
\]
The mixing time $\tau(\epsilon)$ is the minimal time after which the chain is within $\epsilon$-distance from stationarity for all subsequent times:
\[
\tau(\epsilon) = \min \{ t : \Delta_{tv}(t') \leq \epsilon, \forall t' \geq t \}.
\]

Conductance $\Phi$ measures the bottleneck of the chain and influences the mixing time. It is defined as 
\[
\Phi = \min_{\substack{T \subseteq \Omega \\ \pi(T) \leq 1/2}} \frac{C[T, \bar{T}]}{\pi(T)},
\]
where $\bar{T} = \Omega \setminus T$, 
\[
C[T, \bar{T}] = \sum_{\substack{x \in T \\ y \in \bar{T}}} \pi(x) P(x,y),
\]
is the weight of the cut between $T$ and $\bar{T}$, and 
\[
\pi(T) = \sum_{x \in T} \pi(x).
\]
This study links combinatorial enumeration and random generation of structured matrices and tournaments with Markov chain theory, providing efficient algorithms under specific conditions and analyzing their convergence properties through conductance and mixing times.

\section{Open Problems}
During the preparation of this research, many open problems were raised that need to be solved, and we have provided some of the necessary keys to them through the results obtained.
\begin{problem}
Assume $H M_{1}(T)>H M_{1}\left(T^{\prime}\right)$ and $H M_{2}(T)>H M_{2}\left(T^{\prime}\right)$. Then, find all possible extremal value of $H M_{1}(T)$, $H M_{1}\left(T^{\prime}\right)$, $H M_{2}(T)$ and $H M_{2}\left(T^{\prime}\right).$
\end{problem}
\begin{problem}
Find the maximum value of of $H M_{1}(T)$, $H M_{1}\left(T^{\prime}\right)$, $H M_{2}(T)$ and $H M_{2}\left(T^{\prime}\right),$ by considering all possible extremal value.
\end{problem}
\begin{problem}
Find the minimum value of of $H M_{1}(T)$, $H M_{1}\left(T^{\prime}\right)$, $H M_{2}(T)$ and $H M_{2}\left(T^{\prime}\right),$ by considering all possible extremal value.
\end{problem}
\begin{problem}
Find the maximum value of of $e^{H M_{1}}(T)$, $e^{H M_{1}}\left(T^{\prime}\right)$, $e^{H M_{2}}(T)$ and $e^{H M_{2}}\left(T^{\prime}\right),$ by considering all possible extremal value.
\end{problem}
\begin{problem}
Find the minimum value of of  $e^{H M_{1}}(T)$, $e^{H M_{1}}\left(T^{\prime}\right)$, $e^{H M_{2}}(T)$ and $e^{H M_{2}}\left(T^{\prime}\right),$ by considering all possible extremal value.
\end{problem}
\begin{problem}
According to corollary~\ref{cor1bipartite}, assume $k \geq 3$ be a positive integer and $T_0$ be a tree with bipartition $\left(Z_1, Z_2\right)$. If $G$ is a $k$-connected bipartite graph with $\delta(G) \leq \max \left\{\left|Z_1\right|,\left|Z_2\right|\right\}+k$. Prove $G$ contains a $k$-redundant tree isomorphic to $T_0$. Then, find the extremal value of $G$.
\end{problem}

\section{Conclusion}~\label{sec50000}
In this paper, we explored the structural properties of trees and bipartite graphs through the lens of topological indices, degree sequences, and combinatorial invariants. The hyper-Zagreb indices $HM_1(T)$ and $HM_2(T)$ were analyzed for trees $T \in T(n, \Delta)$, demonstrating in Propositions~\ref{pro1endsupportvertex}, \ref{pro1supportvertex}, and \ref{pro1vertex} that the presence of high-degree vertices (at least 3) distinct from a vertex of maximum degree $\Delta$ allows the construction of another tree $T' \in T(n, \Delta)$ with strictly smaller indices, i.e., $e^{HM_1(T)} > e^{HM_1(T')}$ and $e^{HM_2(T)} > e^{HM_2(T')}$. These results, supported by Lemma~\ref{rebaslem1javr}, highlight the sensitivity of hyper-Zagreb indices to vertex degree distributions in trees.

For structural characterizations, Lemma~\ref{lemtapn1} and Theorem~\ref{thmn1sec3} established that trees with distinct degree sequences exhibit ordered behavior under the $S$-order, with alternating greedy trees appearing first. Lemma~\ref{lemtapn3} further connected spectral moments to counts of $P_4$ subgraphs, providing a combinatorial interpretation of tree differences. In the context of bipartite graphs, Theorem~\ref{unmixedgraph1} characterized unmixed bipartite graphs via edge conditions, while Theorems~\ref{thm1Bipartite} and \ref{cor1bip} bounded the equitable chromatic number of trees and bipartite graphs, showing $\chi_{\mathrm{e}}(T) \leq \left\lceil \frac{|X| + |Y| + 1}{\min\{|X|,|Y|\} + 1} \right\rceil$ for trees $T(X,Y)$. Theorems~\ref{thm2Bipartite} and \ref{thm4bipartite} guaranteed long cycles and $k$-redundant tree embeddings in sufficiently connected bipartite graphs, respectively. The study of bipartite independence numbers in Theorem~\ref{thm2number} provided bounds on bi-hole sizes, with $f(n, \Delta) \geq \frac{1}{2} \frac{\log \Delta}{\Delta} n$, while Theorems~\ref{thm1covering}--\ref{thm5covering} elucidated exterior covers and dimensions of bipartite graphs $K \subseteq S \times T$, linking combinatorial and algebraic properties. Finally, the competition numbers of complete $r$-partite graphs were precisely determined in Theorems~\ref{thm1Competition} and \ref{thm2Competition}, and Markov-chain algorithms in Section~\ref{sec4} offered efficient methods for generating bipartite graphs and tournaments with specified degree or score sequences, with rapid mixing under near-regular conditions as in \eqref{eqwern1}.

These findings collectively advance our understanding of the interplay between graph structure, topological indices, and algorithmic generation, with applications in chemical graph theory, network design, and combinatorial optimization. Future work may explore extending these results to broader graph classes or refining bounds for specific degree constraints.

\section*{Declarations}
\begin{itemize}
\item Funding: Not Funding.
\item Data availability statement: All data is included within the manuscript.
\end{itemize}

\end{document}